\newtheorem{theorem}{Theorem}[section]
\newtheorem{lemma}[theorem]{Lemma}
\newtheorem{proposition}[theorem]{Proposition}
\newtheorem{corollary}[theorem]{Corollary}
\newtheorem{definition}[theorem]{Definition}
\newtheorem{remark}[theorem]{Remark}
\numberwithin{equation}{section}
\newcommand{\tq}{\tilde{\mc Q}}
\newcommand{\jn}{J^{\text{neum}}}
\newcommand{\mc}[1]{{\mathcal #1}}
\newcommand{\mb}[1]{{\mathbf #1}}
\newcommand{\bb}[1]{{\mathbb #1}}
\newcommand{\<}{\langle}
\renewcommand{\>}{\rangle}
\renewcommand{\epsilon}{\varepsilon}
  \newcommand{\rz}{\rho^{(0)}}
\newcommand{\ca}{\mathcal{A}}
\newcommand{\cb}{\mathcal{B}}
\newcommand{\cc}{\mathcal{C}}
\newcommand{\cd}{\mathcal{D}}
\newcommand{\cj}{\mathcal{J}}
\newcommand{\cl}{\mathcal{L}}
\newcommand{\cm}{\mathcal{M}}
\newcommand{\cn}{\mathcal{N}}
\newcommand{\cq}{\mathcal{Q}}
\newcommand{\cs}{\mathcal{S}}
\newcommand{\E}{\mathbb{E}}
\newcommand{\FF}{\mathbb{F}}
\newcommand{\N}{\mathbb{N}}
\newcommand{\R}{\mathbb{R}}
\newcommand{\V}{\mathbb{V}}
\newcommand{\Z}{\mathbb{Z}}
\newcommand{\LL}{\mathbb{L}}
\let\b=\beta
\let\d=\delta
\let\e=\epsilon
\let\ve=\varepsilon
\let\g=\gamma
\let\s=\sigma
\let\ve=\varepsilon
\let\l=\lambda
\let\r=\rho
\let\t=\theta
\let\G=\Gamma
\renewcommand{\L}{\Lambda}
\newcommand{\1}{\,\rlap{\small 1}\kern.13em 1}
\newcommand{\sqr}[2]{{\vcenter{\hrule height.#2pt%
                      \hbox{\vrule width.#2pt height#1pt\kern#1pt%
                            \vrule width.#2pt}%
                      \hrule height.#2pt}}}
\newcommand{\cqfd}{\hfill$\mathchoice\sqr46\sqr46\sqr{1.5}2\sqr{1}2$\par}
\renewcommand{\limsup}{\mathop{\overline{\hbox{\rm lim}}}}
\renewcommand{\liminf}{\mathop{\underline{\hbox{\rm lim}}}}
\newcommand{\rar}{\rightarrow}
\newcommand{\Es}{\mathbb{E}}
\newcommand{\Pb}{\mathbb{P}}
\newcommand{\bro}{{\bar{\rho}}}
\begin{document}

\title[Kawasaki process with Neumann Kac interaction] { Boundary driven Kawasaki process with long range interaction: dynamical large deviations and steady states }

\author[M. Mourragui]{Mustapha Mourragui}  \thanks{M.M. supported by ANR Blanc LHMSHE, ANR-2010-BLANC-0108, Grefi- Mefi 2010, Projet risc.} \address{Mustapha Mourragui
  \hfill\break\indent CNRS UMR 6085, Universit\'e de Rouen,
  \hfill\break\indent Avenue de l'Universit\'e, BP.12, Technop\^ole du
  Madril\-let, \hfill\break\indent
F76801 Saint-\'Etienne-du-Rouvray, France.}
\email{Mustapha.Mourragui@univ-rouen.fr}

\author[E. Orlandi]{Enza Orlandi} \thanks{E.O. supported by MURST/Cofin  
Prin2011, 
  ROMA TRE University,  Rouen University.}  \address{Enza Orlandi
   \hfill\break\indent Dipartimento di Matematica, Universit\'a di Roma Tre,  
   \hfill\break\indent L.go S.Murialdo 1, 
00146 Roma, Italy.}
\email{orlandi@mat.uniroma3.it}
\date{\today}


\keywords{Kawasaki process, Kac potential, Large deviations, Stationary
  nonequilibrium states} 

\subjclass[2000]{Primary 82C22; Secondary 60F10, 82C35}

\begin{abstract}  
A particle system  with a single locally-conserved field (density) in a bounded interval  with different densities maintained at the two endpoints 
of the interval is under study here.
The particles interact in the bulk  through a long range potential parametrized by $\beta\ge 0$ and evolve according  to an exclusion rule.
 It is shown that the empirical   particle  density  under the    diffusive scaling
 solves a  quasi-linear  integro-differential  evolution equation 
  with Dirichlet boundary conditions.  The associated   dynamical large deviation principle  is proved.  Furthermore,  for   $\beta$ small enough,   
 it is also demonstrated that the  empirical  particle  density 
 obeys a law of large numbers with
respect to the stationary measures (hydrostatic).  The macroscopic  particle  density   solves  a  non local, stationary,  transport equation.

\end{abstract}

\maketitle
\thispagestyle{empty}

\section{Introduction}
\label{sec1}
 Over the last few  years there have  been  several papers devoted to  understanding the macroscopic properties of 
  systems out of equilibrium.  Typical examples are  systems   in contact   with two thermostats at  different temperature or   with two reservoirs  
at different  densities. 
A mathematical model is provided by  interacting particles performing  local  reversible dynamics  
(for example  reversible  hopping dynamics) in  the interior of   a domain and some external mechanism of creation and annihilation of particles at  the boundary of the domain, modeling the 
reservoirs.  The full process is non reversible.
The  stationary non equilibrium states are characterized by a flow of mass  through  the system and   long range correlations are present. 
   We refer to  \cite{BDGJL7} and  \cite {Der} for two   reviews on this topic.      
     We study a model with such   features but in a regime where   phase separation might occur   at equilibrium  for the underlying reversible model.   To this aim we consider in  the interior of   the domain a   reversible dynamics  (Kawasaki dynamics) constructed  trough  a  mean field interaction (Kac potential),  see below  for more details.  This is the first time, to our knowledge,  where  both,   long range dynamics in the bulk and   creation and annihilation of particles at  the boundary of the domain, are taken into account. 
     
   The   particle models we consider 
     are dynamic versions  of lattice  gases with long-range Kac potentials,
i.e. the interaction energy between two particles, say one at $x$ and the other at $y$ ($x$ and $y$ are both in $\Z^d$), 
is given by  $J_N (x,y)= N^{-d} J( \frac  xN, \frac  yN)$ where $J$ is a smooth  symmetric probability kernel with compact support and $N$ is a positive integer  which  is sent to $\infty$.     The equilibrium states for these models have been investigated
thoroughly  \cite {KUH},  \cite {LP}   and  \cite {PL}, and have provided great mathematical insight into the static
aspects of phase transition phenomena.
  The  dynamical  version  of these lattice gases    in domain with    periodic  boundary conditions (reversible dynamics) has been   analyzed in 
  \cite  {gl1},  \cite  {gl2}, \cite  {GLM} and \cite {AG}.  
 This paper starts by studying  the dynamics of  these systems in a bounded interval    with reservoirs (non-reversible dynamics).
We    investigate   their qualitative  behavior    in the range of the parameter when  at equilibrium phase transition is present.   
Let us   describe  informally  the dynamics. 
 We   consider  a one dimensional lattice gas   with particle reservoirs at the endpoints.
  The restriction on the dimension is done only   for simplicity.  
 Given  an integer $N>1$  at any given time each site of the discret set   $ \{-N, \dots, N \}$ is either occupied by one particle or  empty.   
The interaction energy among particles  is  given by a modified version of the Kac potential  $J_N$ and it is tuned by a positive parameter $ \beta $.
The modification of the Kac potential, see \eqref {tm1},   takes into account that the particles are confined in a     bounded domain.   
In the bulk each particle    jumps  at random times  on  the right  or  on  the left  nearest neighborhood, if the chosen site is empty,  at a rate  which 
depends  on  the  particle  configuration  through the Kac potential.   When $\beta=0$  we have the simple exclusion process.        
At the boundary sites, $\pm N$,   particles are created  and removed for the local density to be     $ 0< \rho_-  \le \rho_+ <1$.  
At rate   $\rho_\pm$ a particles is created at $\pm N$  if the site is empty and at rate $1- \rho_\pm$  the particle at $\pm N$  is removed if the site is occupied. 

 The dynamics    described  above defines an {\it irreducible}  Markov  jump process on a finite state space, i.e. there is a strictly positive  probability to go from any state to another.     By the general theory on Markov  process 
\cite{li3},   the invariant measure  $\mu_N^{stat}$  is unique  and encodes the long time behavior  of the system.  
 Let  $\Pb_{ \mu_N^{stat}}^{\b,N}$ be the stationary process.  
 Since  $ \mu_N^{stat}$ is invariant, the measure $\Pb_{ \mu_N^{stat}}^{\b,N}$   is invariant with respect to time shifts.   The measure  $\Pb_{ \mu_N^{stat}}^{\b,N}$   is invariant  under time reversal  if and only if the measure $\mu_N^{stat}$ is reversible for the process, i.e if the generator  of the process  satisfies the detailed balance condition with respect to  $\mu_N^{stat}$.
In the case at hand   $\mu_N^{stat}$ is stationary and  reversible  only if $ \beta=0$ and $ \rho^+= \rho^- = \rho$.
In  such case the invariant, reversible measure is    the Bernoulli  product measure with marginals $ \rho$. 
When $ \beta >0$  and /or $ \rho^+ \neq  \rho^-$  then
 $\mu_N^{stat}$ is  not reversible.
In such case the corresponding process is  denoted {\it non reversible}. 
We shall consider the latter case.
 The lattice space  is scaled  by $\frac 1 N$ and the time by $N^2$  ({\it diffusive} {\it limit})  while the  behavior is studied, as $N \uparrow \infty$,  of the  empirical density  of the particles    evolving according  to the dynamics described  above.  

 To prove the hydrodynamic behavior of the system, we follow the entropy
method introduced by Guo, Papanicolaou, and Varadhan  \cite {gpv}. It relies on an estimate
on the entropy    of the state of the process with respect to a reference invariant state.
The main problem in the model considered here is that the stationary state is not
explicitly known. We have therefore to consider the entropy  of the state of the process
with respect to a state which is not invariant, for instance, with respect to a product
measure with a slowly varying profile. Since this measure is not invariant, the entropy
does not decrease in time and we need to estimate the rate at which it increases.  These
estimates on the entropy   are  deduced in Subsection 3.1 

It results  that the   local empirical particle density, in the diffusive limit, 
  is the solution of   a  boundary value problem for  a   quasilinear integro differential parabolic equation,  
see  \eqref {eq:1}.
In addition to this,  it is demonstrated that  when $\beta$ is small enough,  then the empirical    particle  density 
 obeys a law of large numbers with respect to the  
stationary   measures  (hydrostatic).   The result  is obtained  characterizing  the support  
of any limit points of $\Pb_{ \mu_N^{stat}}^{\b,N}$. This intermediate   result holds for any $\beta\ge0$. 
 Then it is shown that with 
$\beta$ small enough, the stationary solution of the boundary problem  \eqref {eq:1} is unique, it   is a  global attractor for the  macroscopic evolution  with a   decay rate   uniform with respect to  the   initial datum.   This holds only for $\beta < \beta_0$  where $\beta_0>0$ depends on
 the diameter of the   domain and on the chosen interaction  $J$.
  Namely the   quasilinear non local parabolic equation  does not   satisfy  a comparison principle, which is the main tool   used in previous papers, see  for example \cite {mo2} and \cite  {flm},  to  show the hydrostatic.  
   For value larger than $\beta_0$ we  are not able to show  the uniqueness of the   stationary  solution
   of the boundary value problem \eqref {eq:1}.   
We stress that $ \beta_0< \beta_c$ where  $\beta_c$ is the value above which  phase 
segregation occurs at equilibrium;  with  the  choice  made   of  the parameters   $ \beta_c= \frac 14$, see  page 1712  of \cite{gl2}. 
 Further, we prove the dynamical large deviations for the empirical particle  density. 
 The large deviation functional is not convex as function of the density, 
it is lower semicontinuous and has compact level sets. Since the large deviation functional is not convex and the underlying dynamics does not satisfy any comparison 
principle, care needs  to be taken  to prove  the lower bound. The basic strategy to show the lower bound consists in obtaining this bound for smooth 
paths and then applying a density argument. The argument goes as follows: Given a path $ \rho$ with   finite rate functional   $ I (\rho)$  one constructs a
sequence of smooth paths 
$\rho_n$ so that $ \rho_n \to \rho$ in a suitable topology and  $ I (\rho_n) \to I (\rho)$. 
When  the large deviation  functional is   convex,  this  argument  is easily implemented. In our case, because of the lack of convexity  we modify the definition of 
the rate functional  declaring 
 it infinite if a suitable energy estimate does not hold.    In this way the  modified rate functional when finite  provides the necessary compactness to close the argument. This  method  has  been developed in    \cite {qrv} and we  adapted to our model.   The modification of the rate functional helps in showing the lower bound but makes more difficult the upper 
bound.  One needs to show that the energy estimate holds with probability super exponentially close to one.     
A similar strategy  was applied in  \cite {mo2} and \cite {blm}.
 
  In a   recent paper,   De Masi  et alii,  \cite {DPT},  
 constructed, in the phase transition regime,     stationary solutions 
  of a  boundary value  problem   equivalent to   \eqref {eq:1}  in which the density $\rho$  is replaced by the magnetization $ m= 2\rho -1$.  
They did not study the derivation of  the  boundary value problem from the particle  system   and they did not  inquire  about  the uniqueness of  stationary solutions.
They  investigated the qualitative behavior of     constructed  
stationary solutions of \eqref {eq:1} as the diameter of the domain goes to infinity.  
 They proved,  for  the constructed solution, the validity of the Fourier law    in the thermodynamic  limit showing that,  in the phase transition regime,    the
  limit equilibrium profile  has a discontinuity (which defines the position of the interface) and satisfies  a stationary free boundary Stefan problem.

The paper is organized as  follows: 
 The precise feature of the model, notations and results are stated  in  Section 2,
In Section 3, some basic estimates needed along the paper are collected.
In Section  4 the hydrodynamic and the hydrostatic limits are shown.
Section 5  is split  into 5 subsections and     deals with  dynamical large deviations.
We   prove   in Section 6   the  weak uniqueness of the  solution of  the quasi-linear  integro-differential   equation.   Furthermore,
for $\beta$ small enough,   it is shown  
that its stationary solution is unique and  it is a  global attractor in  $L^2$.

\section{Notation and Results}
\label{sec2}
 
Fix  an integer $N\ge 1$.  Call   $\L_N=\{-N ,\ldots,N\}$   and $\Gamma_N=\{-N,+N\}$  the  boundary points.
The sites of $\L_N$ are denoted by $x$, $y$ and $z$.
 The configuration  space  is $\cs_N \equiv \{0,1\}^{\L_N}$  
which we  equip   with the product topology.  Elements of  $\cs_N$, called configurations,  are denoted  $\eta$ so that  
$\eta (x)\in \{0,1\}$ stands for the number of particles at site $x$ of the configuration $\eta$. 

We  denote    $\L=(-1,1)$  ($ \bar    \L=[-1,1]$)  the macroscopic open (closed) interval,          $ \G= \{-1, 1\}$ its boundary and  $u \in  [-1,1]$  the macroscopic space coordinate.

\subsection {The interaction}  

To define the interaction between particles we introduce     
a smooth, symmetric,   translational invariant probability kernel of range 1, i.e 
    $J(u,v) = J(0, v-u)$ for all $u,v\in\R$, $J(0,u)=0$, for all $|u|>1$, and    $ \int J (u,v) dv =1$, for all $u\in \R$,   \footnote { One could take    the interaction $J$ so  that  for all $u\in \R$,   $ \int J (u,v) dv = a$  with $a>0$.
    The only difference,  with the case   at hand,  is that the  the underlying reversible  particle model  has,  at equilibrium,
    phase  transition  for $\beta \ge \b_c= \frac  1  {4  a} $, see \cite {PL}. }.
When   $ (u, v)  \in \Lambda \times \Lambda $ we define the interaction $J^{neum}(u,v)$  
imposing a reflection rule:  $u$     interacts with  $v$  and  
  with the reflected points of  $v$
 where reflections are the ones  with    respect  to  the left and right  boundary of $\L$.  For this reason
 it is referred to as ``Neuman" interaction. More precisely,   we define for  $u$ and $v$ in $\Lambda$ 
 \begin{equation}
\label{tm1}
J^{neum}(u,v):= J(u,v) + J(u, 2-v) + J(u, -2-v), 
\end{equation} 
where $2-v$  is the image of $v$ under reflections on the  right boundary $\{1\}$ and 
$-2-v$  is the image of $v$ under reflections on the  left  boundary $\{-1\}$. 
By the assumption on $J$,     $J^{neum}(u,v)=  J^{neum}(v,u)$ and  $ \int J^{neum}(u,v) dv=1$ for all $u \in \Lambda$, 
 see Lemma \ref{lem-jn}.  
We defined  the interaction \eqref {tm1}   by boundary reflections  only
for  convenience. It  has  the advantage to keep $J^{neum}$ a
symmetric  probability kernel.  This choice  of the potential  has been done already  in  \cite{DPT}.  

 The    pair interaction  between  $x$ and $y$ in $\L_N $  is     given by 
$$J_N(x,y)=   N^{-1}J^{neum} (\frac x N , \frac y N ). $$  
The total interaction energy among   particles     is given by the following   Hamiltonian 
\begin{equation}
\label{Ha1}
H_N(\eta)= -\frac{1}{2} \sum_{x,y\in\Lambda_N}  J_N(x,y) \eta (x)\eta(y)\; .
\end{equation}
   
 \subsection{ The dynamics} 

We denote by    $\eta^{x,y}$  the configuration obtained from $\eta$ by interchanging 
the values at $x$ and $y$:
\begin{equation*}
(\eta^{x,y}) (z) := 
  \begin{cases}
        \eta (y) & \textrm{ if \ } z=x\\
        \eta (x) & \textrm{ if \ } z=y\\
        \eta (z) & \textrm{ if \ } z\neq x,y,
  \end{cases}
\end{equation*}
and by 
$\sigma^x \eta$   the configuration obtained from
$\eta$ by flipping the occupation number at site $x$:
\begin{equation*}
(\sigma^{x}\eta ) (z) := 
  \begin{cases}
        1-\eta (x) & \textrm{ if \ } z=x\\
        \eta (z) & \textrm{ if \ } z\neq x.
  \end{cases}
\end{equation*}
We denote   for    $f:\cs_N\to\R$, $x,y\in\L_N$ and $\eta\in\cs_N$, 
$$ (\nabla_{x,y}f)(\eta) = f(\eta^{x,y})-f(\eta).$$
The microscopic dynamics is specified by a continuous time Markov  chain   on the state space $\cs_N$ with infinitesimal generator   given  by 
\begin{equation}
\label{gen}
\cl_N = \LL_{\b,N} + \LL_{-,N} +\LL_{+,N}  \;,
\end{equation} where for function $f:\cs_N\to\R$
\begin{equation}
\label{genO}
\left(\LL_{\b,N}  f\right) (\eta)= 
 \sum_{x \in \L_N,  y  \in \L_N\atop |x-y|=1}  C_N^\b (
x,y; \eta) \left[ (\nabla_{x,y}f)(\eta) \right] \; ,
\end{equation}
 with    rate of exchange occupancies   $C_N^\b $   given by 
\begin{equation}
\label{rate1}
C_N^\b(x,y;\eta)= 
\exp \left  \{ -\frac {\beta}2  [H_N (\eta^{x,y}) -H_N (\eta)]  \right\}\; ,
\end{equation}
where $H_N$ is the Hamiltonian \eqref{Ha1};
\begin{eqnarray*}
(\LL_{-,N} f)(\eta) &=&  \: 
c_- \big( \eta(-N) \big) \big[ f(\sigma^{-N} \eta)-f(\eta)\big], \\
  \qquad (\LL_{+,N} f)(\eta) &=& \: 
 c_+ \big( \eta(N) \big) \big[ f(\sigma^{N} \eta)-f(\eta)\big] \, ,
\end{eqnarray*}
where for any $ \rho_\pm \in (0,1)$,    $c_\pm: \{0,1\}\to \bb R$ are given by
\begin{equation*}
  c_\pm (\zeta) \;:=\;
\rho_\pm (1-\zeta) + (1-\rho_\pm) \zeta\;. 
\end{equation*}
The generator $\LL_{\b,N}$  describes the bulk dynamics which preserves the total number of particles, \ the so called   Kawasaki dynamics,  whereas 
$\LL_{\pm,N}$, which is a  generator of  a birth and death  process  acting on   $\Gamma_N$,   models the particle reservoir at the  boundary of $\L_N$. 
The   rate of the bulk dynamics   $ \{ C_N^\b(x,y;\eta), \quad x     \in \L_N,   \quad  y  \in \L_N\} $,    see \eqref {rate1},    satisfies the detailed balance with respect to the Gibbs measure associated to   \eqref {Ha1}  with chemical potential $\l \in \R$ \begin{equation}
\label{gibbs}
\mu^{\b,\l}_{N}(\eta)
= \frac 1{Z^{\b, \l}_{N}} \exp\{- \beta H_N(\eta) +\l \sum_{x\in \L_N}\eta(x) \}\;, \qquad \eta \in  \cs_N\; ,
\end{equation}
where ${Z^{\b,\l}_{N}}$ is the normalization constant.  For the bulk rates this means
$$ C_N^\b(x,y;\eta)=   e^{- \beta   [H_N (\eta^{x,y}) -H_N (\eta)] } C_N^\b(y,x;\eta^{x,y}). $$
For the generator  it means that  $\LL_{\b,N}$ is   self-adjoint w.r.t. the Gibbs measure \eqref {gibbs}, for any  $\l \in \R$.
The corresponding process is denoted reversible. 

For a positive function  $ \rho: \L_N \to    (0,1)$ we denote   by   $ \nu^{\rho}_{N} (\cdot)$  the    Bernoulli measure   with marginals \begin{equation}
\label{prod}
\nu^{\rho (\cdot)}_{N}(\eta(x)=1)=\rho(x/N)=1-\nu^{\rho}_{N}(\eta(x)=0)
 \;,\qquad   x\in\L_N, 
\eta \in  \cs_N.
\end{equation}
The   Bernoulli measure  $\nu^{\rho_+}_{N}$  ( $\nu^{\rho_-}$) is  reversible with respect to the
boundary generator  $\LL_{+,N}$   ($\LL_{-,N}$). 

The Markov process associated to  the generator    $\cl_N$, see \eqref{gen}, is  {\it irreducible} and we denote by $\mu^{stat}_N=\mu^{stat}_N(\b,\rho_-, \rho_+)$  
the unique invariant measure. 
In the notation we stress only  the dependence on  the  parameters relevant to us.
This means that  for any  $f:\cs_N\to\R$
$$ \int_{\cs}  \cl_N   f (\eta) d \mu^{stat}_N (\eta) =0,$$ 
but  the generator $\cl_N$ is {\it not} self-adjoint  with respect to  $ \mu^{stat}_N$.
The corresponding process is called {\it non reversible}.
The only case where the process is reversible is when  $ \rho_-= \rho_+$ and   $\beta =0$. In such case the product measure  associated to   $ \rho_-= \rho_+$ is invariant   and  the  process with generator  $\cl_N$ is also reversible.

We  denote by $\mc M $ the space of positive densities bounded by 1:  
\begin{equation*}
\label{dcm}
\mc M := \left\{ \rho \in L_\infty \big([-1,1],du\big) \,:\:
0\leq \rho \leq 1 \right\}\, ,
\end{equation*}
 where $du$  stands  for the integration with respect to the Lebesgue measure on $[-1,1]$. 
We equip  $\mc M $ with the topology induced by the weak convergence of
measures and  denote by $\langle \cdot,\cdot \rangle$ the  duality  mapping.  
A sequence $\{\rho^n\} \subset \mc M$ converges to
$\rho$ in $\mc M$ if and only if $$\langle \rho^n, G \rangle  = \int_\Lambda  \rho^n (u) G(u) du \to
\langle \rho, G \rangle$$  for any continuous function $G: [-1,1]\to\bb
R$.  Note that $\mc M$ is a compact Polish space that we consider
endowed with the corresponding Borel $\sigma$-algebra.
The empirical density of the configuration $\eta\in\cs_N$ is
defined as $\pi^N(\eta)$ where the map $\pi^N \colon \cs_N \to \mc
M$ is given by  
\begin{equation*}
\label{eq:2}
\pi^N (\eta) \, (u) \;:=\; 
 \sum_{x=-N+1}^{N-1} \eta (x) \,
\mb 1
\big\{ 
{\textstyle \big[ \frac{x}{N}- \frac{1}{2N}, \frac{x}{N}+ \frac{1}{2N}\big)
}
\big\} (u) \; , 
\end{equation*}
in which $\mb 1\{A\}$ stands for the indicator function of the set
$A$.  Let $\{\eta^N\}$ be a sequence of configurations with
$\eta^N\in\cs_N$. If the sequence $\{\pi^N(\eta^N)\}\subset\mc M$
converges to $\rho$ in $\mc M$ as $N\to\infty$, we say that
$\{\eta^N\}$ is \emph{associated} to  the macroscopic density profile
$\rho\in\mc M$.

\subsection {Functional Spaces} Fix a positive time $T$.
Let $D([0,T],\mc M)$,  respectively  $D([0,T],\cs_N)$,   be the set of right continuous with left
limits trajectories $\pi:[0,T]\to\mc M$, resp. $(\eta_t)_{t\in [0,T]} :[0,T]\to\mc \cs_N $,  endowed with the Skorohod
topology and   equipped with its Borel $\s-$ algebra.     Take    $\mu_N$ on $\cs_N$  and denote by   $(\eta_t)_{t\in [0,T]}$
the Markov process   with generator 
$N^{2}\cl_N $ starting,  at time $t=0$, by $ \eta_0$ distributed according to $\mu_N$.  Notice that the generator of the process has been speeded up by $N^2$.  This corresponds to the diffusive scaling.   
Denote by $\Pb_{\mu_N}^{\b,N}$ the 
probability measure on the path space $D([0,T],\cs_N)$ corresponding to 
the Markov process $(\eta_t)_{t\in [0,T]}$
and by 
$\Es_{\mu_N}^{\b,N}$ the expectation with respect to 
$\Pb_{\mu_N}^{\b,N}$.
 When $\mu_N=\delta_\eta$ for some configuration $\eta\in\cs_N$, we write simply
$\Pb_{\eta}^{\b,N}=\Pb_{\delta_\eta}^{\b,N}$ and $\Es_{\eta}^{\b,N}=\Es_{\delta_\eta}^{\b,N}$. We denote by   $\pi^N$   the map from $D([0,T],\cs_N)$  to $D([0,T],\mc M)$  defined by $ \pi^N (\eta_{\cdot})_t=  \pi^N (\eta_t)$ and by  
  $Q_{\mu_N}^{\b,N}  =  \Pb^{\beta,N}_{\mu_N} \circ (\pi^N)^{-1} $ the law of the process $\big( \pi^N (\eta_t) \big)_{t\ge 0}$. 
 For  $m\in L_\infty([-1,1])$ and $u\in\L$ we
denote   
$$
(\jn\star m)(u)=\int_\L \jn (u,v)m(v) dv.
$$
 We need some more notations.
For  integers $n$ and $m$ we denote by $C^{n,m}([0,T]\times[-1,1])$ the space of functions $G= G_t(u): [0,T] \times [-1,1] \to \R$  with $n$ derivatives in time and $m$ derivatives in space which are continuous up to the boundary.  We denote by  $C^{n,m}_0([0,T]\times[-1,1])$  the subset of $C^{n,m}([0,T]\times[-1,1])$ of   functions   vanishing  at the boundary of $[-1,1]$, i.e. $G_t(-1)= G_t(1)=0$ for $t \in [0,T]$.
We denote by   $C^{n,m}_c([0,T]\times(-1,1))$  the subset of  $C^{n,m}([0,T] \times  (-1,1))$  of functions   with compact support in $ [0,T]\times (-1,1)$.  
 \subsection {Results}
We denote   $\chi(\rho) = \rho(1-\rho)$, $\sigma(\rho)=2\chi(\rho)$ and
  $\nabla f$, resp.\ $\Delta f$,   the gradient, resp. 
the laplacian, with respect to $u$ of a function $f$. 
 For   $G\in C^{1,2}_0([0,T]\times[-1,1])$, $\rho  \in D([0,T], \mc M)$ denote
 \begin{equation}
\label{lb1}
\begin{split}
 &\ell_G^\b(\rho, \rho_0) := \big\langle \rho_T, G_T \big\rangle 
- \langle {\rho_0}, G_0 \rangle
- \int_0^{T} \!dt\, \big\langle \rho_t, \partial_t G_t \big\rangle
\\
&\qquad\qquad
- \int_0^{T} \!dt\, \big\langle  \rho_t , \Delta G_t \big\rangle
+ {\rho_+}   \int_0^{T} \! dt\, \nabla G_t(1) 
- {\rho_-}  \int_0^{T} \!dt\, \nabla G_t(-1) \\
&\qquad\qquad
-  \frac{\b}{2}\int_0^T \langle \sigma( \rho_t), (\nabla G_t)\cdot \nabla (\jn\star  \rho_t)\rangle  dt. 
\end{split}
\end{equation}
 Denote by  $\mc A _{[0, T]} \subset D\big([0,T]; \mc M\big)  $ the set 

\begin{equation*}
\mc A _{[0, T]} = \Big\{  \r \in  L^2\big([0,T],H^1(\Lambda)\big) \; :\; 
\quad \forall  G \in \mc C^{1,2}_0 ( [0,T]
 \times \L) \, ,\; \ell_G^\b(\rho, \rho_0)=0
   \Big\}.  
\end{equation*}

\begin{theorem}
\label{th-hy} 
For any sequence of initial probability measures $(\mu_N)_{N\ge 1}$, the sequence of probability measures $(Q_{\mu_N}^{\b,N})_{N\geq 1}$ 
is weakly relatively compact and  all its converging
subsequences converge to   some limit  $Q^{\beta,*}$ that is concentrated on
the absolutely continuous measures   whose density
$\rho \in   \mc A_{[0, T]}  $. Moreover, if for any $\d>0$ and for  any continuous function 
$G:[-1,1]\to \R$
$$
\lim_{N\to 0}\mu^N\Big\{ \Big| \frac1N \sum_{x\in\L_N}\eta(x)G(x/N)-\int_\L \g (u)G(u)du   \Big| \ge \delta \Big\}=0\, ,
$$
for an initial profile $\g:\L\to (0,1)$,  then
the sequence of probability measures $(Q_{\mu_N}^{\b,N})_{N\geq 1}$ 
converges to the Dirac measure  concentrated on the unique   weak  solution of the following
 boundary value problem on $(t,u)\in (0,T)\times \L $ 
\begin{equation} 
\label{eq:1}
\begin{cases}
{\displaystyle
\partial_t \rho_t (u)
+ \,  \b \nabla \cdot \Big\{  \rho_t(u) (1-   \rho_t(u)) \nabla (\jn  \star \rho_t )(u)   \Big\}
= \, \Delta \rho_t(u)}\\
{\displaystyle
\vphantom{\Big\{}
\rho_t (\mp 1) =\; \rho_\mp \quad 
\text {for } 0\le t\le T \;,
} 
\\
{\displaystyle
\rho_0(u) =\g (u) \; .
}
\end{cases}
\end{equation}
\end{theorem}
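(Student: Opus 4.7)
The plan is to apply the entropy method of Guo--Papanicolaou--Varadhan \cite{gpv}, adapted to accommodate both the long-range Kac drift in the bulk and the birth--death dynamics at $\pm N$. First I would prove tightness of $(Q_{\mu_N}^{\b,N})$ in $D([0,T],\mc M)$ via Aldous--Rebolledo: since $\mc M$ is compact, it is enough to control, for each $G\in \mc C^{1,2}_0([0,T]\times[-1,1])$, the modulus of continuity of $t\mapsto\langle\pi_t^N,G_t\rangle$. Dynkin's decomposition writes this process as a martingale plus $\int_0^t N^2\cl_N\langle\pi_s^N,G_s\rangle\,ds$, and both pieces are bounded using the entropy and Dirichlet-form estimates to be collected in Section 3.

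To identify the limiting equation I would compute $N^2\cl_N\langle\pi^N,G\rangle$ explicitly. A discrete summation by parts on $N^2\LL_{\b,N}\langle\pi^N,G\rangle$, combined with the detailed-balance expansion $C_N^\b(x,y;\eta)=1+\tfrac{\b}{2}(H_N(\eta)-H_N(\eta^{x,y}))+O(N^{-2})$, splits it into a symmetric part that, after a second summation by parts, produces the Laplacian term $\langle\pi_s^N,\Delta G_s\rangle$, and an antisymmetric Kac part
\[
\tfrac{\b}{2N}\sum_{x}\eta_s(x)(1-\eta_s(x+1))\,\nabla G_s(x/N)\cdot\nabla(\jn\star\pi_s^N)(x/N)+o(1).
\]
The boundary generators $N^2\LL_{\pm,N}\langle\pi^N,G\rangle$ contribute precisely the terms $\mp\r_\pm\nabla G_s(\pm1)$ appearing in $\ell_G^\b$, once the condition $G_s(\pm 1)=0$ is used to integrate the discrete derivatives by parts at the boundary. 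The standard one-block/two-block replacement scheme, run on the entropy bound of Subsection 3.1 taken against a product reference measure $\nu^{\bar\r(\cdot)}_N$ with a smooth profile interpolating $\r_-$ and $\r_+$, then substitutes $\eta(x)(1-\eta(x+1))$ by $\tfrac12\sigma(\rho_s(u))$ and $\pi^N$ inside $\jn\star(\cdot)$ by $\rho_s$, showing that every limit $Q^{\b,*}$ is concentrated on trajectories satisfying $\ell_G^\b(\rho,\rho_0)=0$ for all $G\in \mc C^{1,2}_0$.

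To upgrade this to membership in $\mc A_{[0,T]}$ I would derive an energy estimate from the Dirichlet form of $\cl_N$ against $\nu^{\bar\r(\cdot)}_N$: the entropy production controls $N^{-1}\sum_x(\eta_s(x+1)-\eta_s(x))^2$ uniformly in $N$, which by duality against smooth test functions yields $\rho\in L^2([0,T],H^1(\L))$, and the Dirichlet trace at $\pm 1$ is pinned to $\r_\mp$ by the reservoir contribution tested against a function whose support reaches up to the boundary. The second assertion of the theorem then follows from the weak uniqueness for \eqref{eq:1} proved in Section 6: for the given initial profile $\gamma$, the element of $\mc A_{[0,T]}$ starting at $\gamma$ is unique, so tightness plus characterization force the whole sequence to converge to the corresponding Dirac mass.

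I expect the most delicate step to be the replacement of the Kac drift, since $\nabla(\jn\star\pi^N)$ is macroscopically nonlocal and nonlinear in $\eta$, and the reflections defining $\jn$ mix bulk and boundary contributions near $\pm 1$; as in \cite{gl1,gl2} this will require superexponential bounds on mesoscopic blocks, and because $\mu_N^{stat}$ is not explicitly known the entropy must be taken against a non-invariant reference measure, so its time derivative grows at a rate that has to be controlled by the boundary entropy flux before the replacement can be closed.
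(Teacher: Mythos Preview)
Your overall strategy---tightness via Aldous together with Dynkin's decomposition, identification of the limit through the martingale $M_t^G$ and the expansion of the rates from Lemma~\ref{b1}, replacement by one-block/two-block, an energy estimate, and finally weak uniqueness from Section~6---is exactly the route the paper takes in Section~\ref{hydro}. Two points in your sketch are not quite right and would have to be corrected before the argument closes.

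First, the boundary generators do \emph{not} produce the terms $\rho_\pm\nabla G_s(\pm 1)$ directly: since $G_s(\pm 1)=0$, one has $\LL_{\pm,N}\langle\pi^N,G_s\rangle=0$ identically. The boundary terms in $\ell_G^\b$ arise instead from the bulk summation by parts of $N^2\LL_{\b,N}\langle\pi^N,G_s\rangle$, which leaves $-\nabla G_s(1)\,\eta_s(N)+\nabla G_s(-1)\,\eta_s(-N)$; the role of the reservoirs is only indirect, through the boundary Dirichlet form $\cd_{\pm,N}$, which drives the replacement of $\eta_s(\pm N)$ by $\rho_\pm$ (this is Proposition~\ref{seeb} in the paper).

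Second, your energy-estimate sketch is not the right object: the quantity $N^{-1}\sum_x(\eta_s(x+1)-\eta_s(x))^2$ is trivially bounded by $2$ for any configuration and carries no information about $H^1$ regularity of the limit. What is needed (and what the paper does in Lemma~\ref{hs02} and Corollary~\ref{cor03}) is an exponential-moment bound, uniform in $G$, on the functional
\[
\tq_G^{\delta_0}(\pi^N*\iota_\epsilon)=\int_0^T\langle\pi_t^N*\iota_\epsilon,\nabla G_t\rangle\,dt-\delta_0\int_0^T\langle\sigma(\pi_t^N*\iota_\epsilon)G_t,G_t\rangle\,dt,
\]
obtained via Feynman--Kac and the variational formula for the principal eigenvalue, trading the drift term against the Dirichlet form $\cd_{0,N}$. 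Optimizing over a countable dense family of $G$'s then yields $\mc Q(\pi)<\infty$, hence $\pi\in L^2([0,T],H^1(\L))$, under any limit point $Q^{\b,*}$; this is how Lemma~\ref{lem3} is proved.
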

\begin {remark}   By     weak  solution of the  
 boundary value problem \eqref {eq:1} we mean $\ell_G^\b(\rho, \g)=0$ 
 for  $G \in  C^{1,2}_0([0,T]\times[-1,1])$. \end {remark}

\begin{theorem}
\label{th-hy1} There exists $\b_0$ depending on $ \Lambda$ and $\jn$ so that, for any  $\beta < \b_0$,
for any   $G \in  C^{2}_0([-1,1])$,  for any $\delta>0$,
 $$  \lim_{N \to \infty} \mu_N^{stat} \Big [ \big|  \langle  \pi_N, G\rangle- \langle \bar \rho, G\rangle\big| \ge \delta\Big] =0 \, ,$$
 where $\bar \rho$ is  the unique   weak  solution of the following
 boundary value problem 
\begin{equation} 
\label{eq:1s}
\begin{cases}
{\displaystyle
 \Delta \rho(u)-   \b \nabla \cdot \Big\{  \rho(u) (1-   \rho(u)) \nabla (\jn  \star \rho )(u)   \Big\}
= 0, \quad u \in \L,} \\
{\displaystyle
\vphantom{\Big\{}
\rho (\mp 1) =\; \rho_\mp \quad \;.
} 
 \end{cases}
\end{equation}

\end{theorem}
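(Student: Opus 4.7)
My plan combines Theorem \ref{th-hy} with the time-stationarity of $\mu_N^{stat}$ and a global attractor property for the hydrodynamic equation \eqref{eq:1}. Compactness of $\mc M$ gives tightness of $\{\mu_N^{stat}\circ(\pi^N)^{-1}\}_{N\ge 1}$; let $\pi^*$ denote any subsequential weak limit. Fixing $T>0$ and considering the stationary process started from $\mu_N^{stat}$, the lifted law $Q^{\b,N}_{\mu_N^{stat}}$ on $D([0,T],\mc M)$ is invariant under time shifts. By Theorem \ref{th-hy} it is tight and any limit point $Q^*$ is concentrated on weak solutions $\rho\in\mc A_{[0,T]}$ of \eqref{eq:1}; time-stationarity passes to the limit, so the time-$t$ marginal of $Q^*$ equals $\pi^*$ for every $t\in[0,T]$.

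The key analytic input, which I would establish in Section 6, is the following attractor property: for $\b<\b_0$, the stationary problem \eqref{eq:1s} admits a unique weak solution $\bar\rho$, and every weak solution $\rho_t$ of \eqref{eq:1} with initial datum $\gamma\in\mc M$ satisfies
\begin{equation*}
\|\rho_t-\bar\rho\|_{L^2(\L)}^2 \;\le\; e^{-c(\b)\,t}\,\|\gamma-\bar\rho\|_{L^2(\L)}^2,
\end{equation*}
with $c(\b)>0$ independent of $\gamma$. I would prove this by an $L^2$ energy estimate for $w_t:=\rho_t-\bar\rho$, which vanishes on $\G$ since both $\rho_t$ and $\bar\rho$ attain the boundary values $\rho_\mp$. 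Testing the equation for $w_t$ against $w_t$ gives
\begin{equation*}
\tfrac12\frac{d}{dt}\|w_t\|_{L^2}^2 \;=\;-\|\nabla w_t\|_{L^2}^2\;+\;\b\int_\L \nabla w_t\cdot\bigl[\chi(\rho_t)\nabla(\jn\star\rho_t)-\chi(\bar\rho)\nabla(\jn\star\bar\rho)\bigr]\,du.
\end{equation*}
The second term is controlled by $C\b\|\nabla w_t\|_{L^2}\|w_t\|_{L^2}$ using Lipschitz continuity of $\chi$ on $[0,1]$, the $L^\infty$ bound on the densities, and the smoothing $\|\nabla(\jn\star f)\|_\infty\lesssim\|f\|_{L^1}$ coming from the smoothness of $\jn$. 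Young's inequality and the Poincar\'e inequality on $\L$, valid for functions vanishing on $\G$, absorb it into the dissipation provided $\b<\b_0=\b_0(|\L|,\|\nabla\jn\|_\infty)$, yielding the claimed exponential decay. Uniqueness of $\bar\rho$ follows from the same estimate applied to the difference of two stationary solutions.

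Combining the two ingredients concludes the argument. Given $\delta>0$, the uniform bound $\|\gamma-\bar\rho\|_{L^2(\L)}\le\sqrt{|\L|}$ and the attractor estimate allow me to choose $T$ so large that $\|\rho_T-\bar\rho\|_{L^2(\L)}<\delta$ holds for every trajectory in the support of $Q^*$. Time-stationarity of $Q^*$ then forces $\|\rho_0-\bar\rho\|_{L^2(\L)}<\delta$ $Q^*$-a.s., so $\pi^*$ is concentrated in a $\delta$-ball around $\bar\rho$ in $L^2$. Letting $\delta\downarrow 0$ gives $\pi^*=\delta_{\bar\rho}$, and since this limit does not depend on the extracted subsequence, the whole sequence $\mu_N^{stat}\circ(\pi^N)^{-1}$ converges weakly to $\delta_{\bar\rho}$, which for continuous test functions $G$ is equivalent to the convergence in probability asserted in Theorem \ref{th-hy1}. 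The main obstacle is the attractor step: \eqref{eq:1} lacks a comparison principle, so the monotone arguments of \cite{mo2,flm} are unavailable, and the $L^2$-energy method closes only below an explicit threshold $\b_0$ strictly smaller than the phase-transition value $\b_c=1/4$; extending the hydrostatic statement to $\b\in[\b_0,\b_c)$ remains open.
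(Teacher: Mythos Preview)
Your proof is correct and follows essentially the same route as the paper: stationarity is used to shift to time $T$, the first part of Theorem~\ref{th-hy} identifies the support of any limit point of $Q^{\b,N}_{\mu_N^{stat}}$ as weak solutions of \eqref{eq:1}, and the $L^2$ attractor estimate for $\b<\b_0$ (the paper's Theorem~\ref{stat2}, proved exactly as you sketch) forces convergence to $\bar\rho$ as $T\to\infty$. The only cosmetic difference is that the paper applies stationarity at the finite-$N$ level and bounds $\bb E^{\b,N_k}_{\mu_{N_k}^{stat}}\big[|\langle\pi^N_T,G\rangle-\langle\bar\rho,G\rangle|\big]\le\|G\|_2\,e^{-c(\b)T}$ directly for each fixed test function $G$, whereas you phrase stationarity at the level of the limiting measure $Q^*$ and argue with $L^2$-balls around $\bar\rho$; the substance is identical.
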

 We prove Theorem   \ref {th-hy} and Theorem \ref {th-hy1} in Section 4.  Recall that the stationary measure $\mu_N^{stat}$ depends on
$\b$.

\vskip0.5cm    Next we  state the large deviation principle associated to the   law of large numbers stated in Theorem \ref {th-hy}.   
Let $
\cj_{G}^\beta =  \cj_{T, G, \gamma}^\beta \colon D([0,T], \mc M)\longrightarrow
\bb R$ be the functional given by
\begin{equation}
\label{Jb1}
\cj_G^\b(\pi) := \ell_G^\b (\pi, \g)\; -\;\frac12 \int_0^{T} \!dt\,
 \big\langle \sigma( \pi_t ), \big( \nabla G_t \big)^2 \big\rangle \; ,
\end{equation}
and ${\hat I}_T^\b(\cdot | \gamma):D([0,T], \mc M)\to [0,+\infty]$   the functional defined by
\begin{equation}
\label{2:Ib}
{\hat I}_T^\b(\pi | \gamma)\; :=\; \sup_{G\in
  C^{1,2}_0([0,T]\times[-1,1])} \cj_G^\b(\pi)\; .
\end{equation}
To define the large deviation rate functional, we  introduce  the   energy functional 
$\cq :D([0,T], \mc M)\to [0,+\infty] $ given by

\begin{equation}
\label{tm3}
 \mc Q(\pi) \;    =
   \sup_{ G}  \Big\{ \int_0^Tdt \langle  \pi_t, \nabla G_t\rangle -  \frac 12  \int_0^Tdt  \langle  \sigma (\pi_t)G_t, G_t\rangle \Big\}\;, 
\end{equation}
  where the supremum is carried over all   $G \in
  C^{\infty}_c([0,T]\times (-1,1))$.   From  the concavity of  $\sigma (\cdot)$ it follows immediately  that   $\mc Q$ is convex   and therefore
lower semicontinuous.  Moreover  $\mc Q(\pi)$ is finite if and only if 
  $\pi\in L^2\big([0,T];H^1(\L)\big)$, and
\begin{equation}\label {tm4}
\mc Q(\pi) \;=\; \frac 12 \int_0^T dt\, \int_{-1}^1 du\,
\frac{(\nabla \pi_t (u))^2}{\sigma (\pi_t(u))}\;\cdot 
\end{equation}
 If \eqref {tm4} holds, then  an  integration  by parts and Schwarz inequality imply  that \eqref {tm3} is finite.
The converse  needs to be proven, for a proof of it   we refer  to  \cite{blm}, Subsection 4.1.     
The rate functional $I_T^\beta(\cdot | \gamma): D([0,T], \mc M) \to
[0,\infty]$ is given by
\begin{equation}
\label{3:Ib}
I_T^\b(\pi|\gamma) =
\begin{cases}
 {\hat I}_T^\b(\pi | \gamma) & \hbox{ if } \ \ \mc Q(\pi)<+\infty\, ,\\ 
+\infty & \hbox{ otherwise .}
\end{cases}
\end{equation}
   We show in Lemma \ref {enT1}  that $I_T^\b(\pi|\gamma) =0$  if and only if  $ \pi_t (\cdot)$  solves the  problem \eqref {eq:1}  with initial datum $\pi_0(\cdot)= \gamma(\cdot)$.

We have the following dynamical large deviation   principle.

\begin{theorem}
\label{s02}
Fix $T>0$ and an initial profile $\gamma$ in $\mc M$.  Consider a
sequence $\{\eta^N : N\ge 1\}$ of configurations associated to
$\gamma$.  Then, the sequence of probability measures $\{ Q_{\eta^N}^{\beta,N} : N\ge 1\}$ on $D([0,T], \mc M)$
satisfies a large deviation principle with speed $N$ and  rate
function $I_T^\b(\cdot|\gamma)$, defined in \eqref {3:Ib}:    \begin{eqnarray*}
&& 
\varlimsup_{N\to\infty} \frac 1N \log Q_{\eta^N}^{\beta,N}
\big( \pi^N \in \mc C\big)
\;\leq\; - \inf_{\pi \in \mc C} I_T^\b (\pi | \gamma)  
\\
&& 
\varliminf_{N\to\infty} \frac 1N \log Q_{\eta^N}^{\beta,N}
\big( \pi^N\in \mc O \big) \;\geq\; -  \inf_{\pi \in \mc O} I_T^\b (\pi |
\gamma) \;, 
\end{eqnarray*}
for any   closed set  $\mc C \subset D([0,T], \mc M)$   and    open set $\mc
O \subset D([0,T], \mc M)$.
   The functional  $I_T^\b(\cdot|\gamma)$    is  lower semi-continuos and has compact level sets.  
\end{theorem}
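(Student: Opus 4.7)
The plan is to follow the classical variational/Girsanov route for hydrodynamic large deviations, with the twist that, because the rate functional is non-convex (the drift contains the nonlinear term $\sigma(\pi_t)\nabla(\jn\star\pi_t)$ and $\sigma$ is concave), one must add the energy bound $\mc Q(\pi)<\infty$ to the rate functional, as in \cite{qrv,blm}, in order to recover enough compactness to close the lower bound.

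First I would establish an exponential bound on the upper level. For $G\in C_0^{1,2}([0,T]\times[-1,1])$ the quantity
\begin{equation*}
M_T^{G,N} := \exp\Bigl\{N\bigl[\<\pi_T^N,G_T\>-\<\pi_0^N,G_0\>\bigr]-\int_0^T e^{-N\<\pi_t^N,G_t\>}(\partial_t+N^2\cl_N)e^{N\<\pi_t^N,G_t\>}\,dt\Bigr\}
\end{equation*}
is a mean-one martingale. Expanding $\cl_N e^{N\<\pi^N,G\>}$ to second order in $1/N$ and applying a super-exponential Replacement Lemma (one-block/two-block along the lines of \cite{gpv}, with the Kac-type interaction $\jn$ being harmless since it is already macroscopic) lets one identify the non-martingale part with $N\,\mc J_G^\b(\pi^N)+o(N)$. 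Boundary contributions from $\LL_{\pm,N}$ are handled by restricting test functions $G$ to vanish at $\pm 1$ and using that the average density at the extreme sites converges to $\rho_\pm$, which yields the Dirichlet terms $\rho_\pm\int_0^T\nabla G_t(\pm 1)\,dt$ in $\ell_G^\b$. Together with exponential tightness in the Skorohod topology (which can be obtained by Aldous' criterion applied to linear functionals $\<\pi_t^N,G\>$), this gives the upper bound on closed sets with rate function $\hat I_T^\b$.

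The next key ingredient is the super-exponential energy estimate: there exists $C$ such that for every $A>0$,
\begin{equation*}
\limsup_{N\to\infty}\frac1N\log Q_{\eta^N}^{\b,N}\bigl(\mc Q(\pi^N)\ge A\bigr)\le -A+C.
\end{equation*}
This is proved by picking a countable dense family $\{G_k\}\subset C^\infty_c([0,T]\times(-1,1))$, bounding the expectation of $\exp\{N[\int \<\pi_t^N,\nabla G_k\>dt-\frac12\int\<\sigma(\pi_t^N)G_k,G_k\>dt]\}$ by comparing with the appropriate exponential martingale and the integration-by-parts yields a Dirichlet-form bound. Combined with the upper bound for $\hat I_T^\b$, this promotes the upper bound to the full $I_T^\b$, since any path with $\mc Q(\pi)=\infty$ is exponentially unlikely.

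For the lower bound, following the strategy of \cite{qrv,blm}, I would first prove it for a dense class $\mc S$ of \emph{nice} paths, namely smooth $\pi$ bounded away from $0$ and $1$ with $\pi_0=\gamma$ and matching the boundary values $\rho_\pm$. For such $\pi$, the optimizer in the variational formula is an explicit $H=H(\pi)$ solving $\partial_t\pi-\Delta\pi+\b\nabla(\sigma(\pi)\nabla(\jn\star\pi)/2)=-\nabla(\sigma(\pi)\nabla H)$, and perturbing the dynamics by the weakly asymmetric generator obtained from $H$ produces a process whose hydrodynamic limit is $\pi$ and whose Radon--Nikodym derivative, via Girsanov for jump processes, asymptotically yields $e^{-NI_T^\b(\pi|\gamma)}$. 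The hard part will be the density step: given a general $\pi$ with $I_T^\b(\pi|\gamma)<\infty$, construct $\pi_n\in\mc S$ with $\pi_n\to\pi$ in $D([0,T],\mc M)$ and $I_T^\b(\pi_n|\gamma)\to I_T^\b(\pi|\gamma)$. Here the nonconvexity forbids a simple convolution/convex combination argument; instead one first regularizes $\pi$ using the hydrodynamic semigroup to push it away from the degenerate values $0,1$ and then mollifies in space-time, using the bound $\mc Q(\pi)<\infty$ (built into $I_T^\b$) to ensure that the approximating currents and their energies converge. Upper semicontinuity of $I_T^\b$ along such approximations plus lower semicontinuity (inherited from the supremum form of $\hat I_T^\b$ and the lower semicontinuity of $\mc Q$) yields the continuity of the rate function along the approximation, closing the lower bound. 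Finally, compactness of level sets follows from exponential tightness together with the energy estimate, since level sets are contained in $\{\mc Q\le A\}\cap K$ for a Skorohod-compact $K$, and boundedness of $\mc Q$ yields $L^2([0,T];H^1)$-compactness.
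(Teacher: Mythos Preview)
Your overall architecture matches the paper's: exponential martingale $+$ super-exponential replacement for the upper bound, a super-exponential energy estimate to upgrade from $\hat I_T^\b$ to $I_T^\b$, Girsanov for the lower bound on a class of smooth profiles, and an $I_T^\b$-density argument to extend. The properties of the rate function (lower semicontinuity, compact level sets) are also obtained as you indicate, via the $H^1$/$H^{-1}$ bounds implied by $\mc Q(\pi)<\infty$ and $I_T^\b(\pi|\gamma)<\infty$.

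Two points where your sketch diverges from the paper and where care is needed. First, in the density step you propose to regularize with the \emph{hydrodynamic} semigroup and then mollify in space--time. The paper instead uses the \emph{heat} semigroup ($\beta=0$) in two ways: it first glues a short heat-equation piece at $t\in[0,\delta]$ (the set $\mc A_1$), and then takes the convex combination $\pi^\varepsilon=(1-\varepsilon)\pi+\varepsilon\rho^{(0)}$ with the heat solution to push away from $0,1$ (the set $\mc A_2$). The heat equation is used deliberately: it has a strong maximum principle (so $\rho^{(0)}$ is bounded away from $0,1$ for $t>0$) and it is linear, so convex combinations behave well against the concavity of $\sigma$. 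Using the nonlinear hydrodynamic flow here would make the convexity/concavity bookkeeping in the rate-function estimate substantially harder, and the paper explicitly notes that comparison/maximum principles fail for \eqref{eq:1}. Second, the final smoothing (the set $\mc A_3$) is not a space--time mollification of $\pi$ but an approximation of the Riesz-representation force $F\in H_0^1(\sigma(\pi))$ by smooth $F^n$, followed by solving \eqref{f05} with $F^n$ and using compactness (Lemmas \ref{g05}--\ref{g06}) to pass to the limit; this is the \cite{flm} strategy and avoids having to control the rate functional under direct mollification of $\pi$.

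On the upper bound, the paper does not separate ``upper bound with $\hat I_T^\b$'' and ``energy estimate'' as two independent steps; it runs them together in Proposition \ref{up1} via a H\"older split with a parameter $a\in(0,1]$, obtaining a bound in terms of $\hat I_T^\b(\pi|\gamma)+a(1+a)\tilde{\mc Q}(\pi)$ plus an $O(a)$ error, and then lets $a\downarrow 0$. Your formulation ``$\limsup N^{-1}\log Q(\mc Q\ge A)\le -A+C$'' is morally equivalent but would require an extra argument to combine with $\hat I_T^\b$ on the same compact set; the interpolation-in-$a$ device is what makes the join clean.
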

We prove Theorem \ref{s02} in Section 5.

\section{Basic estimate}
\label{basic}
  Next lemma  states   some properties of the potential $\jn(\cdot,\cdot)$  easily 
obtained by its definition.

\begin{lemma}\label{lem-jn}
The potential $\jn(\cdot,\cdot)$ is a symmetric probability kernel. Moreover for any regular function $G:\L\to \R$, 
we have the following:
\begin{equation}
\label{newman}
\Big|\nabla \Big( \int_\L \jn(u,v)G(v) dv\Big)\Big|\le \int_\L \jn(u,v)  \big|\nabla G(v)\big| dv.
\end{equation}
\end{lemma}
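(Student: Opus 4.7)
\emph{Proof plan.} The symmetry $\jn(u,v)=\jn(v,u)$ is direct from the definition. Writing $J(u,v)=\phi(v-u)$ with $\phi$ even (which follows from $J$ being translation invariant and symmetric), each of the three summands in \eqref{tm1} is individually symmetric under the exchange $u\leftrightarrow v$, since $\phi(2-v-u)$ and $\phi(-2-v-u)$ are both symmetric in $(u,v)$. For the normalization $\int_\Lambda \jn(u,v)\,dv=1$, I would split the integral into three pieces and, in the second, substitute $w=2-v$ to get $\int_1^3 J(u,w)\,dw$, and in the third substitute $w=-2-v$ to get $\int_{-3}^{-1} J(u,w)\,dw$. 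Since $u\in[-1,1]$ forces the support of $J(u,\cdot)$ to lie in $[u-1,u+1]\subset[-2,2]$, the three pieces patch together to $\int_\R J(u,w)\,dw=1$.

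The key observation for the gradient inequality \eqref{newman} is that the same substitutions yield
\[
(\jn\star G)(u)\;=\;\int_\R J(u,w)\,\widetilde G(w)\,dw,
\]
where $\widetilde G$ is the Neumann reflection of $G$, defined by $\widetilde G(w)=G(w)$ on $[-1,1]$, $\widetilde G(w)=G(2-w)$ on $[1,3]$, $\widetilde G(w)=G(-2-w)$ on $[-3,-1]$ (values outside $[-3,3]$ are irrelevant, since $J(u,\cdot)$ vanishes there for $u\in\Lambda$). By construction $\widetilde G$ is continuous across the reflection seams $w=\pm1$.

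Next I would differentiate in $u$ using translation invariance $\partial_u J(u,w)=-\partial_w J(u,w)$ and integrate by parts in $w$ on each of the three subintervals. The boundary contributions at the outer endpoints $w=\pm3$ vanish because $J(u,\pm3)=0$ for $u\in[-1,1]$; the inner contributions at $w=\pm1$ cancel pairwise by the continuity of $\widetilde G$. This gives
\[
\nabla (\jn\star G)(u)\;=\;\int_\R J(u,w)\,\widetilde G'(w)\,dw,
\]
with $\widetilde G'$ the piecewise derivative. Since $|\widetilde G'(w)|$ equals $|\nabla G(w)|$, $|\nabla G(2-w)|$, or $|\nabla G(-2-w)|$ in the respective regions, applying the triangle inequality inside the integral and unwinding the substitutions produces $\int_\Lambda \jn(u,v)|\nabla G(v)|\,dv$, which is exactly \eqref{newman}.

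The only mildly delicate point is the integration by parts across the reflection seams, but the continuity of $\widetilde G$ makes all interior boundary terms cancel; everything else is bookkeeping with the three change-of-variable formulas.
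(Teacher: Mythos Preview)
Your proof is correct and follows essentially the same route as the paper: translation invariance converts the $u$-derivative into a $w$-derivative, integration by parts moves it onto $G$, the reflection structure kills the boundary terms, and a triangle inequality finishes. The only difference is organizational: the paper keeps everything on $\Lambda$ and obtains the explicit identity
\[
\nabla(\jn\star G)(u)=\int_\Lambda\big[J(u,v)-J(u,2-v)-J(u,-2-v)\big]\nabla G(v)\,dv,
\]
then applies the pointwise bound $|a-b-c|\le a+b+c$ for nonnegative $a,b,c$; you instead unfold to the Neumann extension $\widetilde G$ on $[-3,3]$ and apply the triangle inequality to $\widetilde G'$. Substituting back shows the two formulas are identical, so this is a repackaging rather than a different argument.
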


\begin {proof} 
The symmetry of $\jn$ follows immediately by the one of $J$. We have 
$$
\jn(u,v)=J(0, v-u)+J(0,2-(u+v))+J(0,2+(u+v))\, ,
$$
which is symmetric in $u$ and $v$.
We now prove that $\jn$ is probabiltity kernel. 
 Fix $u\in \L$,  
by a change of variables, 
\begin{equation*}
 \begin{aligned}
&\int_\L \jn (u,v)dv \, =\, \int_{(-1-u)  \vee {(-1)} }^{(1-u) \wedge {1}} J(0,v)d v
\, +\, \int_{(1-u) \vee {(-1)}   }^{(3-u)  \wedge {1}} J(0,v)d v \\
&\qquad\qquad\qquad\qquad
\, +\, \int_{ (-3-u)  \vee {(-1)} }^{ (-1-u) \wedge {1}} J(0,v)d v\, .
\end{aligned}
\end{equation*}
Suppose first that $u\in [0,1]$, then
\begin{equation*}
\begin{aligned}
\int_\L \jn (u,v)dv   
  &=\int_{-1}^{1-u} J(0,v)d v \, +\, \int_{1-u}^{1} J(0,v)d v\\
\ & = \int_{-1}^{1} J(0,v)d v \, =\, 1.
\end{aligned}
\end{equation*}
and thus, $\jn(u,\cdot)$ is a probability. The proof for $u\in [-1,0]$ is similar.
It remains to prove \eqref{newman}:
\begin{equation*}
 \begin{aligned} &
\nabla \Big( \int_\L \jn(u,v)G(v) dv\Big)  = \int_\L \partial_u \jn(u,v)G(v)dv\\
\ &=\int_\L \Big[ J(u,v)- J(u,2-v) -J(u,-2-v)  \Big]\nabla G(v) dv \, . 
\end{aligned}
\end{equation*}
The result follows from the following inequatity,
$$
\Big|J(u,v)- [J(u,2-v) +J(u,-2-v)]\Big| \le 
\jn(u,v)
$$
for all $u,v\in \L$.
\end {proof}

\medskip 

\noindent  For any $G:\L\to \R$ and $x,x+1\in\L_N$ denote by   $\nabla^N G (\frac x N)$
  the discrete gradient: 
\begin{equation}
\label{derive1}
\nabla^N G (x/N)=N\big[G ((x+1)/N)-G(x/N)\big]\;.
\end{equation}

Next, we show  that  the  rate $C_N^\b$  of $ \LL_{\b,N}$     is a perturbation of the  rate of the   symmetric simple exclusion generator.   
\medskip
\begin{lemma}
\label{b1}
For any $x\in\L_N$, with $x \pm 1\in\L_N$ and $\eta\in \cs_N$,
$$
C_N^\b(x,x \pm 1;\eta)= 1 \mp \frac\b2 \big(\eta(x + 1)-\eta(x) \big)  N^{-1}\nabla^N\big[ \big( \jn\big) \star \pi(\eta)\big](x/N)+O(N^{-2})\; .
$$
 \end{lemma}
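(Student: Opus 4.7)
The plan is to Taylor-expand the exponential defining $C_N^\b$ to first order in $\b\D H$, where $\D H := H_N(\eta^{x,x+1})-H_N(\eta)$, and then identify the linear term as a discrete gradient of the convolution $\jn \star \pi^N(\eta)$. Since I will show $\D H = O(N^{-1})$ uniformly in $\eta$, the quadratic remainder $(\D H)^2$ is automatically absorbed into $O(N^{-2})$, and only the linear contribution needs to be computed precisely.

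First I would observe that $\eta^{x,x+1}$ differs from $\eta$ only at sites $x$ and $x+1$ and the cross term $J_N(x,x+1)\eta(x)\eta(x+1)$ is invariant under the swap, so that using $J_N(z,w)=J_N(w,z)$ gives
\[
\D H \,=\, -(\eta(x+1)-\eta(x))\sum_{w\in\L_N}\bigl[J_N(x,w)-J_N(x+1,w)\bigr]\eta(w) \,+\, r_N,
\]
where $r_N$ collects the diagonal contributions $J_N(x,x)$ and $J_N(x+1,x+1)$ together with the two subtracted terms $w\in\{x,x+1\}$ needed to make the sum run over all of $\L_N$. Since $J_N = N^{-1}\jn$ and $\jn$ is smooth on $\bar\L\times\bar\L$, each of these pieces is of order $N^{-1}$, and after the pairwise cancellation coming from the difference $J_N(x,\cdot)-J_N(x+1,\cdot)$ one obtains $r_N=O(N^{-2})$.

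Second, a midpoint Riemann-sum estimate, legitimate because $\jn$ is $C^2$, yields
\[
\sum_{w\in\L_N} N^{-1}\jn(x/N,\,w/N)\,\eta(w) \,=\, (\jn \star \pi^N(\eta))(x/N) \,+\, O(N^{-2}),
\]
and the analogous identity at $(x+1)/N$. Subtracting the two produces precisely $N^{-1}\nabla^N[\jn\star\pi(\eta)](x/N)$ up to $O(N^{-2})$. Substituting back into $\D H$ and inserting the expansion $\exp(-\tfrac{\b}{2}\D H) = 1 - \tfrac{\b}{2}\D H + O((\D H)^2)$ delivers the claimed formula in the upper-sign case.

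For the lower sign I would apply the result just obtained to the bond $(x-1,x)$ and then move the base point of the discrete gradient from $(x-1)/N$ to $x/N$: because $\jn\star\pi^N(\eta)$ is a convolution against a smooth kernel, it has a bounded second derivative, hence $\nabla^N[\jn\star\pi(\eta)]((x-1)/N) = \nabla^N[\jn\star\pi(\eta)](x/N) + O(N^{-1})$, and this discrepancy, multiplied by the prefactor of order $N^{-1}$, is absorbed into the $O(N^{-2})$ error. I do not anticipate any genuine difficulty; the whole argument amounts to the careful bookkeeping of three small errors — the diagonal contributions to $\D H$, the Riemann-sum discretization of the convolution, and the second-order Taylor remainder of the exponential — each controlled by the smoothness of $\jn$.
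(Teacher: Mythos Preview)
Your proposal is correct and follows essentially the same route as the paper: compute $\Delta H = H_N(\eta^{x,x\pm1})-H_N(\eta)$ directly from the Hamiltonian, identify its leading part as $(\eta(x)-\eta(x\pm1))$ times a discrete gradient of $\jn\star\pi^N(\eta)$, and Taylor-expand the exponential. The paper's proof is terser—it writes down the exact formula for $\nabla_{x,y}H_N(\eta)$ in one line and declares the result—while you spell out the Riemann-sum step and the separate treatment of the lower sign, but the argument is the same.
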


\begin{proof} 
By definition of $H_N$, for all $x,y\in\L_N$ and $\eta\in\cs_N$,
\begin{equation*}
\begin{aligned}
&\big(\nabla_{x,y}H_N\big)(\eta) =
\frac{1}{N}\big(\eta(x)-\eta(y) \big)^2\big( \jn(\frac xN, \frac yN ) - \jn(0,0)  \big) \\
&\ \qquad +
\big(\eta(x)-\eta(y) \big) \frac{1}{N}\sum_{z\in\L_N}\eta(z) \big [ \jn(\frac xN, \frac zN ) -\jn(\frac yN, \frac zN ) \big].\\ 
\end{aligned}
\end{equation*}
This concludes the proof. 
\end{proof}

\vskip0.5cm \noindent
We start recalling the definitions  of  relative  entropy and Dirichlet form, that are the main tools in the \cite{gpv} approach. 
Let $h:\L\to (0,1)$ and $\nu_N^{h(\cdot)}$ be  the   product   Bernoulli measure  defined in \eqref{prod}. Given    $\mu$,  a probability measure on $\cs_N$,  denote by
$H(\mu|\nu_N^{h(\cdot)} )$ the relative entropy of $\mu$ with
respect to $\nu_N^{h(\cdot)}$:
$$
H(\mu|\nu_N^{h(\cdot)}) \; =\; \sup_{f} \Big\{
\int f(\eta) \mu (d\eta) - \log \int e^{f(\eta)} \nu_N^{h(\cdot)} (d\eta) \Big\}\; , 
$$
where the supremum is carried over all bounded
functions on $\cs_N$. Since $\nu_N^{h(\cdot)}$ gives a
positive probability to each configuration, $\mu$
is absolutely continuous with respect to $\nu_N^{h(\cdot)}$ and  we
have an explicit formula for the entropy: 
\begin{equation}
\label{ent1}
H(\mu|\nu_N^{h(\cdot)} ) \; =\; \int  
\log  \Big\{ \frac{d\mu }{d \nu_N^{h(\cdot)}} \Big\} \, d\mu \; .
\end{equation}
Further, since  there is at most one particle per site, there exists a constant $C$, that depends only on  $h(\cdot)$,  such that
\begin{equation}
\label{entbound}
H(\mu| \nu_N^{h(\cdot)}  ) \;\leq\; C  N
\end{equation}
for all probability measures $\mu$ on $\cs_N$  (cf. comments
following Remark V.5.6 in \cite{kl}).

\subsection{Dirichlet form estimates}
One of the main step  for   deriving  the  hydrodynamic limit and   the large deviations,  is a super exponential estimate 
which allows the replacement of local functions by functionals of the empirical density. One needs to estimate expression such as 
$\langle Z,f \rangle_{\mu^N}$ in terms of Dirichlet form $ N^2\langle -\cl_N   \sqrt{f(\eta)},  \sqrt{f(\eta)} \rangle_{\mu^N}$, where $Z$ is a local function and $\langle\cdot,\cdot\rangle_{\mu^N}$ 
represents  the inner  product with respect to some probability  measure  $ \mu^N$.  In the context of boundary driven process, the fact that the  invariant measure   is not 
explicitly known introduces a technical difficulty. 
We fix as reference measure a  product measure $\nu_N^{\theta(\cdot)} $, see \eqref {prod},  where $\theta$ is  a smooth function with the   only requirement that $\theta(\mp 1) \; =\; \rho_{\mp}$. 
There is therefore  no reasons for  
$ N^2 \langle -\cl_N   \sqrt{f(\eta)},  \sqrt{f(\eta)}\rangle_{\nu^\theta (\cdot)}$ to be positive.
Next  lemma  estimates this quantity.

Define the following  functionals
from    $h \in L^2 ( \nu)$ to $\R^+$: 
\begin{equation}
\label{dir-form2}
\begin{aligned}
\cd_{0,N}\big(h, \nu\big) & =
 \sum_{x=-N}^{N-1} \int
  \left( {h}(\eta^{x,x+1}) -{h}(\eta) \right)^2 d \nu (\eta) \, ,\\
\cd_{+,N} \big(h,\nu\big) & =  \frac12
 \int  c_+ \big( \eta(N) \big) 
  \left( {h}(\sigma^{N-1}\eta) -{h}(\eta) \right)^2 d \nu (\eta)\, , \\
\cd_{-,N} \big(h,\nu\big) & = \frac12 
  \int  c_- \big( \eta(-N)  
  \left( {h}(\sigma^{-N+1}\eta) -{h}(\eta) \right)^2 d \nu (\eta) \, .
\end{aligned}
\end{equation}
 
\begin{lemma}
\label{dirichlet} 
Let $\theta:\overline\L\to (0,1)$ be a smooth function such that
$\theta(\mp 1) \; =\; \rho_{\mp}$.
There exists a positive 
constant $C_0\equiv C_0(\|\nabla \theta \|_\infty )$  so  that for any   $a >0$  and for $f\in L^2\big( \nu_N^{\theta(\cdot)}  \big)$,
 
\begin{equation}\label {dirt1}
 \int_{\cs_N} f(\eta) \LL_{\b,N}
f(\eta)  d  \nu_N^{\theta(\cdot)} (\eta) 
\le -\big(1-{a}\big) {\cd}_{0,N} \big({f},\nu_N^{\theta(\cdot)} \big)  
+\frac{C_0}{a} N^{-1}  \| f\|^2_{L^2(\nu_N^{\theta(\cdot)}) },
\end{equation}
\begin{equation}\label {dirt2} \int_{\cs_N} f(\eta) \LL_{\pm,N}
f(\eta)  d \nu_N^{\theta(\cdot)}(\eta)  
= -  {\cd}_{\pm,N} \big({f},\nu_N^{\theta(\cdot)} \big) \; . 
\end{equation}
\end{lemma}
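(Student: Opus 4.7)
\textbf{Plan for \eqref{dirt1}.}
The overall strategy is the standard near-symmetrization of $\int f\,\LL_{\b,N}f\,d\nu_N^{\theta(\cdot)}$ under a non-reversible reference measure, followed by a Young step that trades a small multiple $a$ of the Dirichlet form for an $L^2$ remainder of size $C_0 N^{-1}/a$. Write $K_x(\eta):=C_N^\b(x,x+1;\eta)$ and $R_x(\eta):=d\nu_N^{\theta(\cdot)}(\eta^{x,x+1})/d\nu_N^{\theta(\cdot)}(\eta)$. Using the change of variable $\eta\leftrightarrow\eta^{x,x+1}$ together with the algebraic identity $2a(b-a)=(b^2-a^2)-(b-a)^2$, a direct computation gives
\begin{equation*}
\int f\,\LL_{\b,N}f\,d\nu_N^{\theta(\cdot)}
= -\sum_x\int K_x(\eta^{x,x+1})R_x(\eta)\,(\nabla_{x,x+1}f)^2\,d\nu_N^{\theta(\cdot)}
- \sum_x\int f\,(\nabla_{x,x+1}f)\,\Psi_x\,d\nu_N^{\theta(\cdot)},
\end{equation*}
where $\Psi_x(\eta):=K_x(\eta^{x,x+1})R_x(\eta)-K_x(\eta)$ quantifies the failure of reversibility of $\nu_N^{\theta(\cdot)}$ for $\LL_{\b,N}$.

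Two pointwise estimates drive the argument. First, Lemma~\ref{b1} gives $K_x=1+O(\b/N)$ uniformly in $\eta$. Second, the smoothness of $\theta$ and a Taylor expansion at $x/N$ yield
\begin{equation*}
R_x(\eta)=1+(\eta(x)-\eta(x+1))\,N^{-1}\,\frac{\theta'(x/N)}{\theta(x/N)(1-\theta(x/N))}+O(N^{-2}),
\end{equation*}
so $|R_x-1|\le C\,N^{-1}\|\nabla\theta\|_\infty$. Combining the two expansions, $\Psi_x=O(N^{-1})$ uniformly and $K_x(\eta^{x,x+1})R_x\ge 1-CN^{-1}$. Hence the first sum is bounded above by $-(1-CN^{-1})\cd_{0,N}(f,\nu_N^{\theta(\cdot)})$.

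For the cross term apply Young's inequality bondwise,
\begin{equation*}
\big|f\,\nabla_{x,x+1}f\,\Psi_x\big|\le \tfrac{a}{2}(\nabla_{x,x+1}f)^2+\tfrac{1}{2a}\,f^2\,\Psi_x^2,
\end{equation*}
and sum. The first piece contributes $(a/2)\cd_{0,N}(f,\nu_N^{\theta(\cdot)})$, which is absorbed into the leading Dirichlet term. For the second, $\Psi_x^2=O(N^{-2})$ pointwise and there are $O(N)$ bonds, hence $\sum_x\Psi_x^2\le C_0 N^{-1}$ (with $C_0$ depending only on $\|\nabla\theta\|_\infty$ through $R_x-1$), producing the remainder $\tfrac{C_0}{2a}N^{-1}\|f\|_{L^2(\nu_N^{\theta(\cdot)})}^2$. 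Choosing $N$ large enough that $CN^{-1}\le a/2$ collects everything into $-(1-a)\cd_{0,N}(f,\nu_N^{\theta(\cdot)})+\tfrac{C_0}{a}N^{-1}\|f\|_{L^2(\nu_N^{\theta(\cdot)})}^2$, which is \eqref{dirt1}.

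\textbf{Plan for \eqref{dirt2}.}
Because $\theta(\mp 1)=\rho_\mp$, the marginal of $\nu_N^{\theta(\cdot)}$ at the boundary site $\pm N$ is Bernoulli($\rho_\pm$), which is the reversible measure for the single-site birth--and--death generator $\LL_{\pm,N}$. Detailed balance $c_\pm(\eta(\pm N))\,\nu_N^{\theta(\cdot)}(\eta)=c_\pm((\sigma^{\pm N}\eta)(\pm N))\,\nu_N^{\theta(\cdot)}(\sigma^{\pm N}\eta)$ is a one-line check on the marginal. Then the standard identity for reversible generators, obtained from $f(g-f)=\tfrac12(g^2-f^2)-\tfrac12(g-f)^2$ and a change of variable, gives $\int f\,\LL_{\pm,N}f\,d\nu_N^{\theta(\cdot)}=-\tfrac12\int c_\pm(\eta(\pm N))\,(\nabla f)^2\,d\nu_N^{\theta(\cdot)}=-\cd_{\pm,N}(f,\nu_N^{\theta(\cdot)})$.

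The main obstacle is the careful accounting of the $O(N^{-1})$ corrections in $\Psi_x$. One must verify that the $\theta$-variation correction in $R_x$ and the Kac correction in $K_x$ genuinely appear at the same $N^{-1}$ order (rather than $O(1)$ which would be fatal), and that after Young the leading constant in the $L^2$ remainder depends only on $\|\nabla\theta\|_\infty$, independently of $f$ and of the particular configuration through the Kac-potential term, whose $\eta$-dependence is tamed by the uniform bound $|\nabla^N(\jn\star\pi(\eta))|\le C\|\jn\|_{C^1}$.
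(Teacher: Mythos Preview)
Your approach is essentially correct and close in spirit to the paper's, but with one organizational difference that creates a small gap. You symmetrize with the full rate $K_x$ kept, obtaining $-\sum_x\int K_x(\eta^{x,x+1})R_x(\nabla_{x,x+1}f)^2\,d\nu$ and then bound $K_x(\eta^{x,x+1})R_x\ge 1-CN^{-1}$. This leaves a deficit $CN^{-1}\cd_{0,N}(f,\nu)$ in the Dirichlet coefficient, which you dispose of by ``choosing $N$ large enough that $CN^{-1}\le a/2$''. But the lemma is stated for all $N\ge 1$ and all $a>0$, so this step as written is incomplete. (The missing case $a<2CN^{-1}$ can in fact be closed: use the crude bound $\cd_{0,N}\le A\,N\|f\|_2^2$ so that $(CN^{-1}-a/2)\cd_{0,N}\le A\,aN(C-aN/2)\cdot\frac{1}{aN}\|f\|_2^2\le \frac{AC^2}{2aN}\|f\|_2^2$, which is absorbed by enlarging $C_0$.)

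The paper avoids this issue by reversing the order of the two moves: it first invokes Lemma~\ref{b1} to write $C_N^\b(x,x+1;\eta)=1+O(N^{-1})$, splitting $\int f\LL_{\beta,N}f\,d\nu$ into the SSEP part (rate $1$) and an $O(N^{-1})$ cross term $\sum_x\int|\nabla_{x,x+1}f|\,|f|\,d\nu$. Only then does it symmetrize the SSEP part, producing $-\cd_{0,N}$ exactly (no $CN^{-1}$ loss) plus a second cross term with the non-reversibility factor $R_N=1-R_x=O(N^{-1})$. Both cross terms are handled by Young's inequality with parameter $a$, each costing $\tfrac{a}{2}\cd_{0,N}$ and contributing $O(N^{-1}/a)\|f\|_2^2$; the two $\tfrac{a}{2}$ losses add up cleanly to the $(1-a)$ coefficient. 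Your decomposition is algebraically equivalent to the paper's but packages the two $O(N^{-1})$ defects (from $K_x-1$ and from $R_x-1$) differently, and your choice to leave $K_x(\eta^{x,x+1})R_x$ multiplying $(\nabla f)^2$ is what forces the extra ``$N$ large'' step.

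For \eqref{dirt2} your argument is identical to the paper's: it simply invokes reversibility of $\nu_N^{\theta(\cdot)}$ for $\LL_{\pm,N}$, which follows from $\theta(\mp 1)=\rho_\mp$.
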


\begin{proof}
The proof of \eqref {dirt2} follows from the reversibility of the Bernoulli measure  $\nu_N^{\theta(\cdot)}$
 with respect to $\LL_{\pm,N}$.
Next, we show \eqref {dirt1}. By Lemma \ref{b1},
\begin{equation}
\label{l0}
\begin{aligned}
\int_{\cs_N} f(\eta) \LL_{\b,N}
f(\eta)  d  \nu_N^{\theta(\cdot)} (\eta) &\le 
 \sum_{x=-N}^{N-1} \int\Big[\big(\nabla^{x,x+1} f\big)(\eta)\Big]f(\eta) 
d\nu_N^{\theta(\cdot)} (\eta)\\
\ &\ 
+\frac{A_1}{N}\sum_{x=-N}^{N-1} \int\Big|\big(\nabla^{x,x+1} f\big)(\eta)\Big|f(\eta) 
d\nu_N^{\theta(\cdot)} (\eta)
\end{aligned}
\end{equation}
for some positive constant $A_1$ depending only on $\beta$ and $J$.
We write the first term of the right hand side of  \eqref {l0}   as
\begin{equation}
\label{l1}
\begin{aligned}
&-\sum_{x=-N}^{N-1} \int\Big[\big(\nabla^{x,x+1} f\big)(\eta)\Big]^2
d\nu_N^{\theta(\cdot)} (\eta)\\
&\
+ \sum_{x=-N}^{N-1}\int R_N(x,x+1;\theta,\eta)
\Big[\big(\nabla^{x,x+1} f\big)(\eta)\Big]f(\eta^{x,x+1})
d\nu_N^{\theta(\cdot)} (\eta)\; ,
\end{aligned}
\end{equation}
where
\begin{equation}
\label{reste}
R_N(x,x+1;\theta,\eta)=
\big[ 1- e^{-N^{-1}\nabla^N\l(\theta(x/N))
 \big(\nabla^{x,x+1}\eta(x)\big)} \big]\; ,
\end{equation}
$\lambda$ is the chemical potential defined by
\begin{equation}
\label{chemical}
\lambda(r)=\log\left[ r/(1-r)\right]\; 
\end{equation}
and $\nabla^N$ stands for the discrete derivative defined in \eqref{derive1}. 
 By the inequality 
\begin{equation}\label{sch-ineq}
\text{for all}\ A,B\in\R\quad  and \quad a>0\; ,\quad
AB\le \frac{a}{2}A^2 +\frac{1}{2a}B^2
\end{equation}
and Taylor expansion, the formula \eqref{l1} is bounded by
\begin{equation}
\label{l2}
- \big(1-\frac{a}2\big) \sum_{x=-N}^{N-1} \int\Big[\big(\nabla^{x,x+1} f\big)(\eta)\Big]^2
d\nu_N^{\theta(\cdot)} (\eta)
+\frac{A_2}{a }N^{-1}\| f\|^2_{L^2(\nu_N^{\theta(\cdot)}) }
\end{equation}
for all $a>0$. Here $A_2$ is a positive constant.

The second term  on   the right hand side of  \eqref {l0}  is handled in the identical way. It  is bounded by
\begin{equation}
\label{l3}
 \frac{a}2 \sum_{x=-N}^{N-1} \int\Big[\big(\nabla^{x,x+1} f\big)(\eta)\Big]^2
d\nu_N^{\theta(\cdot)} (\eta)
+ \frac{A_3}{a} N^{-1}\| f\|^2_{L^2(\nu_N^{\theta(\cdot)}) }\; .
\end{equation}
The lemma follows from \eqref{l0},\eqref{l1}, \eqref{l2} and \eqref{l3}.
\end{proof}

Denote   for $h \in L^2 ( \nu)$
$$
\cd_{\beta,N}\big(h, \nu\big) =
 \sum_{x=-N}^{N-1} \int C_N^\b(x,x+1;\eta)
  \left( {h}(\eta^{x,x+1}) -{h}(\eta) \right)^2 d \nu (\eta) \, .
$$
\begin{lemma}
\label{dir0}
There exists a positive constant $C_1= C_1 (\beta,J)$ such that, for  any measure $\nu$ and for $h \in L^2 ( \nu)$,
$$
\big(1-\frac{C_1}N\big)\cd_{0,N}\big(h, \nu\big)\le \cd_{\beta,N}\big(h, \nu\big) 
\le \big(1+\frac{C_1}N\big) \cd_{0,N}\big(h, \nu\big).
$$
\end{lemma}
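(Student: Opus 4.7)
The plan is to deduce pointwise, uniform (in $\eta$ and $x$) two-sided bounds on the rate $C_N^\beta(x,x+1;\eta)$ of the form $1-C_1/N \le C_N^\beta(x,x+1;\eta) \le 1+C_1/N$, from which both inequalities follow at once by multiplying the non-negative integrand $(h(\eta^{x,x+1})-h(\eta))^2$, integrating against $\nu$, and summing over $x\in\{-N,\dots,N-1\}$.

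The main input is the computation of $\nabla_{x,x+1}H_N(\eta)$ already carried out in the proof of Lemma \ref{b1}: it consists of a diagonal term with an explicit $1/N$ prefactor and a sum $\frac{1}{N}\sum_z \eta(z)\big[\jn(x/N,z/N)-\jn((x+1)/N,z/N)\big]$. Since $\jn$ is smooth (as a finite sum of translates and reflections of the smooth kernel $J$), the mean value theorem gives $\big|\jn(x/N,z/N)-\jn((x+1)/N,z/N)\big| \le \|\nabla \jn\|_\infty /N$. Because there are $O(N)$ sites and $\eta(z)\in\{0,1\}$, the sum is $O(1)$, and the prefactor $1/N$ yields an $O(N^{-1})$ contribution. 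Combining the two pieces I get a uniform bound $\big|(\beta/2)\nabla_{x,x+1}H_N(\eta)\big| \le K/N$ with $K=K(\beta,J)$ independent of $x$, $\eta$, $N$.

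Inserting this into $C_N^\beta(x,x+1;\eta)=\exp\{-(\beta/2)\nabla_{x,x+1}H_N(\eta)\}$ yields $e^{-K/N}\le C_N^\beta(x,x+1;\eta)\le e^{K/N}$. Elementary estimates on the exponential (for $N\ge 1$ the inequality $e^{\pm K/N}\le 1\pm C_1/N$ holds with $C_1:=Ke^K$, in the sense $e^{K/N}\le 1+C_1/N$ and $e^{-K/N}\ge 1-C_1/N$) then give the desired uniform two-sided bounds on the rates.

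Since $(h(\eta^{x,x+1})-h(\eta))^2 \ge 0$, the bounds on the rates pass term by term through the sum and the integral defining $\cd_{\beta,N}(h,\nu)$ and $\cd_{0,N}(h,\nu)$, producing the claim. The only mild caveat is that for very small $N$ the lower bound $1-C_1/N$ may be non-positive; but then the left inequality is automatic from $\cd_{\beta,N}(h,\nu)\ge 0$. No step here is genuinely difficult: the only substantive estimate is the uniform $O(N^{-1})$ bound on the energy increment, which is essentially a rereading of the computation already performed in Lemma \ref{b1} using only the smoothness of $\jn$.
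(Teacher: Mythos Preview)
Your proposal is correct and follows the same approach as the paper, whose entire proof reads: ``The proof is elementary since $|C^\beta_N(x,x+1,\eta)-1|$ is uniformly bounded in $N$, $x$ and $\eta$.'' You have simply made explicit what the paper leaves implicit, namely that Lemma~\ref{b1} (or the direct computation of $\nabla_{x,x+1}H_N$) gives the sharper bound $|C_N^\beta(x,x+1;\eta)-1|\le C_1/N$ uniformly, from which the two-sided inequality on the Dirichlet forms is immediate by nonnegativity of the integrand.
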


\begin{proof}
The proof is elementary since $ \big|C^\b_N(x,x+1,\eta)-1\big|$ is uniformly bounded in $N$, $x$ and $\eta$.
\end{proof}

\begin{lemma}
\label{dir2}
Let $\rho,\rho_0:\overline\L\to (0,1)$ be two smooth functions.
There exists a positive 
constant $C_0'\equiv C_0'(\|\nabla \rho_0\|_\infty ,\|\nabla \rho\|_\infty )$ such that for any probability  measure $\mu^N$ on $\cs_N$,
\begin{equation}
\label{bound-l1}
 \cd_{0,N} \Big(\sqrt{    \frac{d\mu^N}{d \nu_N^{\rho(\cdot)} }    },
\nu_N^{\rho(\cdot)} \Big)\; \le \;
2\; \cd_{0,N} \Big(\sqrt{ \frac{d\mu^N}{d\nu_N^{\rho_0(\cdot)} }},
\nu_N^{\rho_0(\cdot)}  \Big)
  \; +\; C_0' N^{-1}\; .
\end{equation}
\end{lemma}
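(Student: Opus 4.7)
The plan is a change-of-reference-measure argument. Set
$$h:=\sqrt{d\mu^N/d\nu_N^{\rho(\cdot)}},\qquad h_0:=\sqrt{d\mu^N/d\nu_N^{\rho_0(\cdot)}},\qquad R:=\frac{d\nu_N^{\rho_0(\cdot)}}{d\nu_N^{\rho(\cdot)}},\qquad \Psi:=\sqrt{R},$$
so that $h=h_0\Psi$ and $\Psi^2\,d\nu_N^{\rho(\cdot)}=d\nu_N^{\rho_0(\cdot)}$. First I would apply the elementary inequality $(a_1b_1-a_2b_2)^2\le 2(a_1-a_2)^2b_2^2+2a_1^2(b_1-b_2)^2$ with $a_1=h_0(\eta^{x,x+1})$, $a_2=h_0(\eta)$, $b_1=\Psi(\eta^{x,x+1})$, $b_2=\Psi(\eta)$, producing
$$[h(\eta^{x,x+1})-h(\eta)]^2\le 2\Psi(\eta)^2[h_0(\eta^{x,x+1})-h_0(\eta)]^2+2h_0(\eta^{x,x+1})^2[\Psi(\eta^{x,x+1})-\Psi(\eta)]^2.$$
Integrating against $d\nu_N^{\rho(\cdot)}$ and summing over $x$, the first term contributes exactly $2\,\cd_{0,N}(h_0,\nu_N^{\rho_0(\cdot)})$ thanks to $\Psi^2\,d\nu_N^{\rho(\cdot)}=d\nu_N^{\rho_0(\cdot)}$. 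The whole game then reduces to bounding the contribution of the second term by $C_0'N^{-1}$.

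Next I would exploit two structural facts. First, $\Psi(\eta^{x,x+1})=\Psi(\eta)$ whenever $\eta(x)=\eta(x+1)$; only the configurations with $\eta(x)\ne\eta(x+1)$ contribute at the bond $\{x,x+1\}$. Second, using the explicit product form of $R$ together with the chemical potential $\lambda(r)=\log(r/(1-r))$ from \eqref{chemical}, a direct computation gives
$$\bigl|\log R(\eta^{x,x+1})-\log R(\eta)\bigr|=\bigl|[\lambda_0-\lambda]((x+1)/N)-[\lambda_0-\lambda](x/N)\bigr|,$$
where $\lambda_0,\lambda$ denote the chemical potentials of $\rho_0$ and $\rho$. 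A first-order Taylor expansion of $\lambda_0-\lambda$ bounds the right-hand side by $C/N$, with $C$ depending only on $\|\nabla\rho\|_\infty$, $\|\nabla\rho_0\|_\infty$ and on the uniform distance of $\rho,\rho_0$ from $\{0,1\}$. Hence $[\Psi(\eta^{x,x+1})-\Psi(\eta)]^2\le (C/N)^2\Psi(\eta)^2$.

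To close the estimate I would perform the change of variable $\xi=\eta^{x,x+1}$ (an involution) and use that this swap distorts the Bernoulli product measure only by a factor $1+O(N^{-1})$:
$$\int h_0(\eta^{x,x+1})^2\Psi(\eta)^2\,d\nu_N^{\rho(\cdot)}(\eta)=\int h_0(\xi)^2\,\frac{d\nu_N^{\rho_0(\cdot)}(\xi^{x,x+1})}{d\nu_N^{\rho_0(\cdot)}(\xi)}\,d\nu_N^{\rho_0(\cdot)}(\xi)\le 1+CN^{-1},$$
using $\int h_0^2\,d\nu_N^{\rho_0(\cdot)}=\mu^N(\cs_N)=1$. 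Combining this with the $(C/N)^2\Psi^2$ factor found above and summing over the $2N$ bonds of $\Lambda_N$ produces the claimed $C_0'N^{-1}$ remainder.

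The argument is structurally the same as similar Dirichlet-form comparisons for reversible reference measures, and I do not foresee any genuinely conceptual obstacle. The only delicate point is the bookkeeping: one must verify that the constant $C_0'$ is traced through to depend only on $\|\nabla\rho\|_\infty$ and $\|\nabla\rho_0\|_\infty$ (via the Taylor remainder of $\lambda_0-\lambda$ and the swap-distortion of $\nu_N^{\rho_0(\cdot)}$) and not on $\mu^N$.
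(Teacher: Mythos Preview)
Your proposal is correct and follows essentially the same approach as the paper's proof: a change-of-reference-measure identity $h=h_0\Psi$, the elementary inequality $(a_1b_1-a_2b_2)^2\le 2(a_1-a_2)^2b_2^2+2a_1^2(b_1-b_2)^2$ (the paper writes this as $(A+B)^2\le 2A^2+2B^2$ after first dividing through by $\Psi(\eta)$), a Taylor expansion of the chemical-potential difference to get an $O(N^{-2})$ bound per bond on the remainder, and the swap $\xi=\eta^{x,x+1}$ to reduce to $\int h_0^2\,d\nu_N^{\rho_0(\cdot)}=1$. The only cosmetic difference is that the paper changes measure to $\nu_N^{\rho_0(\cdot)}$ before splitting the square (yielding the factor $R_2=\Psi(\eta^{x,x+1})/\Psi(\eta)-1$), whereas you split first and then change measure; the two decompositions are algebraically equivalent.
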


\begin{proof}
Denote by $f(\eta)=\frac{d\mu^N}{d\nu_N^{\rho(\cdot)}}(\eta) $
and $h(\eta)=\frac{d\mu^N}{d\nu_N^{\rho_0(\cdot)}}(\eta)$. Since  
$f(\eta)=  h(\eta)  \frac { {d\nu_N^{\rho_0(\cdot)}}(\eta) }  {d\nu_N^{\rho(\cdot)}(\eta)} $ we obtain for 
$-N \le x\le N -1$ the following  
\begin{equation*}
\begin{aligned}
&\int_{\cs_N}  \Big[ \nabla_{x,x+1}\sqrt{f}(\eta) \Big]^2
d\nu_N^{\rho (\cdot)}(\eta) \\
\ & \ \ 
=\int_{\cs_N}  \Big[ \sqrt{h}(\eta^{x,x+1})
R_2(x,x+1;\eta) +\nabla_{x,x+1} \sqrt{h}(\eta) \Big]^2
d\nu_N^{\rho_0(\cdot)}(\eta)  \\
&\ \ \ 
 \le 2 \int_{\cs_N}  \Big[ \nabla_{x,x+1}\sqrt{h}(\eta) \Big]^2
d\nu_N^{\rho_0 (\cdot)}(\eta) \\
\ &\quad
+2\int_{\cs_N} h(\eta^{x,x+1})
\big[R_N(x,x+1;\rho,\eta)\big]^2 
d\nu_N^{\rho_0(\cdot)}(\eta) \; ,
\end{aligned}
\end{equation*}
where
$$
R_2(x,x+1;\eta) = 
\exp \big\{ (1/2)N^{-1} \nabla^N [ \l(\rho(x/N))-\l(\rho_0(x/N))  ]  \nabla_{x,x+1}\eta(x) \big\} -1 \; 
$$
and $\lambda$ is the chemical potential defined by \eqref{chemical}.
We conclude the proof using Taylor expansion and integration by
parts.
\end{proof}

\subsection{Superexponential estimates}

For a positive integer $\ell$ and $x\in\L_N$   denote  
\begin{equation*}
\Lambda_{\ell}(x) = \Lambda_{N,{\ell}}(x) = \{y\in\L_N:\, |y-x|\leq {\ell}\}\,.
\end{equation*}
When $x=0$, we shall denote $\L_\ell(0)$ simply by $\L_\ell$, that is, for all $1\le \ell \le N$,
$$
\Lambda_\ell \equiv \Lambda_{N,\ell}(0) = \{-\ell,\cdots,\ell\}\, . 
$$
Denote the empirical
mean density on the  box  $\Lambda_{\ell}(x)$    by $\eta^{\ell}(x)$:
\begin{equation}
\label{average}
\eta^{\ell}(x) = \frac{1}{|\Lambda_{\ell}(x)|}\sum_{y\in\Lambda_{\ell}(x)}\eta(y)\,.\end{equation}
For a cylinder function $\Psi$,   that is a function on $\{0,1\}^\Z$ depending on $\eta (x)$, $ x \in \Z$, only trough finitely many $x$,   denote  by $\widetilde{\Psi}(\rho)$ the expectation of $\Psi$ with respect to  $\nu^{\rho}$, the Bernoulli 
product  measure   with density $\rho$:
\begin{equation}  \label {te1} 
\widetilde{\Psi}(\rho) = E^{\nu^{\rho}}[\Psi]\, .
\end{equation}
  Further, denote  for   $G\in\mc C([0,T]\times [-1,1])$ 
and $\varepsilon>0$ 
\begin{equation}\label{vne}
V_{N,\varepsilon}^{G,\Psi}(s,\eta)=\frac{1}{N}\sum_{x \in \Lambda_N}
G_s(x/N)\left[\tau_x\Psi(\eta)-
\widetilde{\Psi}(\eta^{[\varepsilon N]}(x))\right]\, ,
\end{equation}
where     the sum is carried over all $x$ such that the support of
$\tau_x\Psi$ belongs to $\L_N$ and   $ [\cdot] $ denotes the  lower integer part.

\begin{proposition}
\label{see1}
Let  
$\{\mu_N: N\geq 1\}$ be  a sequence of probability measures on $\cs_N$. For
every $\delta>0$,
\begin{equation*}
\limsup_{\varepsilon\to 0}\limsup_{N\to\infty}
\frac{1}{N}\, \log \Pb_{\mu_N}^{\b,N}
\Big[ \, \Big|\int_0^T V_{N,\varepsilon}^{G,\Psi}(s,\eta_s) \, ds \Big|
>\delta\Big] \;=\; -\infty\, .
\end{equation*}
\end{proposition}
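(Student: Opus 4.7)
The plan is to follow the standard entropy/Feynman--Kac route, adapted to the non-reversible boundary-driven setting via the reference measure $\nu_N^{\theta(\cdot)}$ with $\theta(\mp 1)=\rho_\mp$. First I would apply the exponential Chebyshev inequality to get, for every $a>0$,
\begin{equation*}
\Pb_{\mu_N}^{\b,N}\Bigl[\,\Bigl|\int_0^T V_{N,\varepsilon}^{G,\Psi}(s,\eta_s)\,ds\Bigr|>\delta\Bigr] \leq e^{-\d a N}\,\Es_{\mu_N}^{\b,N}\Bigl[\exp\Bigl(aN\int_0^T |V_{N,\varepsilon}^{G,\Psi}(s,\eta_s)|\,ds\Bigr)\Bigr],
\end{equation*}
and it suffices to show the same for $V_{N,\varepsilon}^{G,\Psi}$ in place of $|V_{N,\varepsilon}^{G,\Psi}|$ (splitting into positive and negative parts via $e^{|x|}\leq e^x+e^{-x}$). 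Next, the entropy inequality together with the uniform bound $H(\mu_N|\nu_N^{\theta(\cdot)})\leq CN$ from \eqref{entbound} reduces the estimate to controlling, with $\mu_N$ replaced by $\nu_N^{\theta(\cdot)}$, the quantity $\frac{1}{N}\log\Es_{\nu_N^{\theta(\cdot)}}^{\b,N}[\cdots]$; by Feynman--Kac this is bounded by $\int_0^T \lambda_N^{\varepsilon,s}\,ds$, where $\lambda_N^{\varepsilon,s}$ is the largest eigenvalue of the symmetric part of $N^2\cl_N + aN V_{N,\varepsilon}^{G,\Psi}(s,\cdot)$ in $L^2(\nu_N^{\theta(\cdot)})$.

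Second, I would use the Rayleigh variational formula to write
\begin{equation*}
\lambda_N^{\varepsilon,s} = \sup_{f\geq 0,\,\int f\,d\nu_N^{\theta(\cdot)}=1}\Bigl\{ aN\!\!\int V_{N,\varepsilon}^{G,\Psi}(s,\eta)f(\eta)\,d\nu_N^{\theta(\cdot)} + N^2\!\!\int \sqrt f\,\cl_N\sqrt f\,d\nu_N^{\theta(\cdot)}\Bigr\}.
\end{equation*}
Applying Lemma \ref{dirichlet} with some $a'\in(0,1)$ bounds the second term above by $-\tfrac12 N^2\cd_{0,N}(\sqrt f,\nu_N^{\theta(\cdot)}) - N^2\cd_{+,N}(\sqrt f,\nu_N^{\theta(\cdot)}) - N^2\cd_{-,N}(\sqrt f,\nu_N^{\theta(\cdot)}) + C_0' N$. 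The boundary Dirichlet forms $\cd_{\pm,N}$ are nonnegative and can be discarded, so the problem is reduced to showing that for every density $f$,
\begin{equation*}
a\!\int V_{N,\varepsilon}^{G,\Psi}(s,\eta)f(\eta)\,d\nu_N^{\theta(\cdot)} \leq \tfrac12 N\,\cd_{0,N}(\sqrt f,\nu_N^{\theta(\cdot)}) + R_N(\varepsilon,a),
\end{equation*}
with $\limsup_\varepsilon\limsup_N R_N(\varepsilon,a)\to 0$ as $a\downarrow 0$.

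The third step is to execute the classical one-block and two-block replacement for this inequality. The one-block estimate first replaces $\tau_x\Psi$ by $\widetilde\Psi(\eta^k(x))$ for a fixed $k$; local ergodicity on boxes of size $k$, together with the restricted Dirichlet form (the box contributions being majorized by $\cd_{0,N}$), yields an error $o_k(1)$. The two-block estimate then replaces the mean on boxes of size $k$ by the mean on boxes of size $[\varepsilon N]$, using that the displacement of a tagged density average across overlapping boxes is controlled by exchange terms, which are again absorbed in $\cd_{0,N}$. Both estimates here are essentially identical to their analogue for the symmetric exclusion process since, by Lemma \ref{dir0}, the Dirichlet form $\cd_{\b,N}$ of the interacting dynamics is comparable to $\cd_{0,N}$ up to a $(1+C_1/N)$ factor; the $\b$-perturbation therefore contributes only lower-order terms.

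The main obstacle is the use of a non-invariant reference measure: the naive identity $\int\sqrt f\cl_N\sqrt f\,d\nu\leq -\cd(\sqrt f,\nu)$ fails, and one must track the extra $O(N)$ error produced by Lemma \ref{dirichlet}. This is precisely why the Feynman--Kac eigenvalue is allowed to be $O(1)$ rather than $O(1/N)$; the exponential factor $e^{\d a N}$ in the Chebyshev step absorbs these $O(N)$ corrections after choosing $a$ small. A second delicate point is that the functions $\tau_x\Psi$ with support meeting $\{\pm N\}$ are handled trivially by the boundary convention $\int V_{N,\varepsilon}^{G,\Psi}$ (they contribute only $O(1/N)$ per site), so the boundary reservoir plays no role in the replacement lemma beyond providing the non-negativity of $\cd_{\pm,N}$ that allows them to be discarded.
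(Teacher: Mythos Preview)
Your overall route (exponential Chebyshev $\to$ change of reference measure $\to$ Feynman--Kac $\to$ Rayleigh formula $\to$ Lemma~\ref{dirichlet} $\to$ one/two-block) is the paper's route, but two points deserve correction.

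\textbf{The direction of the limit in $a$ is wrong.} You reduce to
\[
a\!\int V_{N,\varepsilon}^{G,\Psi}(s,\eta)\,f(\eta)\,d\nu_N^{\theta(\cdot)} \;\le\; \tfrac12 N\,\cd_{0,N}(\sqrt f,\nu_N^{\theta(\cdot)}) + R_N(\varepsilon,a)
\]
with $\limsup_\varepsilon\limsup_N R_N(\varepsilon,a)\to 0$ as $a\downarrow 0$. As stated this is trivially true (take $R_N=a\|V\|_\infty$) and proves nothing: after Chebyshev the log-probability is bounded by $-\delta a + K_1 + T C_0/a' + T\limsup R_N$, and sending $a\downarrow 0$ gives a finite constant, not $-\infty$. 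The correct reduction is that for \emph{every fixed} $a>0$,
\[
\limsup_{\varepsilon\to 0}\limsup_{N\to\infty}\sup_f\Big\{ a\!\int V_{N,\varepsilon}^{G,\Psi} f\,d\nu - (1-a')N\,\cd_{0,N}(\sqrt f,\nu)\Big\}=0,
\]
after which one lets $a\uparrow\infty$ so that $-\delta a$ dominates the $O(1)$ constants coming from the entropy bound and from Lemma~\ref{dirichlet}. The paper states exactly this and then sends $a\uparrow\infty$.

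\textbf{The paper adds a localization step you omit.} Before Feynman--Kac the paper introduces a cutoff $c>0$, throws away the strip $\L_N\setminus\L_{[(1-2c)N]}$ (which costs only $cTK_0$ in the integrand), and chooses the reference profile $\rho_c$ to be \emph{constant} on $[-1+c,1-c]$. Since the truncated functional $\V_{N,\varepsilon}^{c,G,\Psi}$ depends only on $\eta(x)$ for $x\in\L_{(1-c)N}$, one may replace $\nu_N^{\rho_c(\cdot)}$ by the homogeneous $\nu_N^{\gamma_0}$, for which $\cd_{0,N}(\cdot,\nu_N^{\gamma_0})$ is the genuine Dirichlet form of the reversible generator $\LL_{0,N}$; the one/two-block estimates of \cite[Ch.~5]{kl} then apply verbatim. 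Your claim that ``both estimates are essentially identical to their analogue for the symmetric exclusion process'' with the non-homogeneous $\nu_N^{\theta(\cdot)}$ is not automatic: you would need either this cutoff trick or an explicit reduction (e.g.\ via Lemma~\ref{dir2}, or by arguing that on microscopic boxes the smooth profile is effectively constant) to a homogeneous reference before invoking the standard one/two-block machinery.
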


\begin{proof}
 Fix    $c > 0$ that will decreases to $0$ after $\e$   and a smooth function $\rho_c\colon \L\to (0,1)$ 
 which is constant  in $\L_{(1-c)}\, =\, [-1+c, 1-c]$ and   equal to $\rho_\pm$ at the boundary, i.e   $\rho_c(\pm 1)=\rho_\pm$.    The constant can be arbitrarily chosen and we  denote it $\g_0$. 
Divide  $\L_{N}$ in two subsets, $\L_{[(1-2c)N]}$
    and $\L_{N}\setminus \L_{[(1-2c)N]}$ and split  the sum over $x $   in the definition of  $V_{N,\varepsilon}^{G,\Psi}$ into the sum  over these two sets.   Since 
$$
\sup_{\eta, \varepsilon,N,x\in\L_N} \Big\{G_s(x/N)\left[\tau_x\Psi(\eta)-
\widetilde{\Psi}(\eta^{[\varepsilon N]}(x))\right]\Big\}<\infty\ ,
$$ 
we have that
\begin{equation}\label{at1}
 \left | \frac{1}{N}\sum_{x \in \L_{N}\setminus \L_{[(1-2c)N] }}
\int_0^T ds G_s(x/N)\left[\tau_x\Psi(\eta)-
\widetilde{\Psi}(\eta^{[\varepsilon N]}(x))\right] \right |  \le  cTK_0 
\end{equation}
for some positive constant $K_0$ which depends on $G$.
By Chebyshev exponential  inequality,  for all $a>0$,
\begin{equation}
\label{es1}
\begin{aligned} 
&\frac{1}{N}\, \log  \Pb_{\mu_N}^{\b,N}
\Big[ \, \Big|\int_0^T V_{N,\varepsilon}^{G,\Psi}(s,\eta_s) \, ds \Big|
>\delta\Big]\\
&\qquad\qquad
\le -a\big(\delta  - T K_0 c\big)\; +\; 
\frac{1}{N}\, \log  \Pb_{\mu_N}^{\b,N}
\Big[ \,  \exp  \left (  aN\, \Big|\int_0^T \V_{N,\varepsilon}^{c,G,\Psi} (s,\eta_s) \, ds \Big| \right )  \Big]\; ,
\end{aligned}
\end{equation}
  where
\begin{equation}
\label{at2}
\V_{N,\varepsilon}^{c,G,\Psi} (s,\eta)
\; =\;
\frac{1}{N}\sum_{x\in \L_{[(1-2c)N]}}
G_s(x/N)\left[\tau_x\Psi(\eta)-
\widetilde{\Psi}(\eta^{\varepsilon N}(x))\right]\; .
\end{equation}
  It is immediate to see that  the  Radon-Nikodym derivative  
$$\frac{d  \Pb^{\b,N}_{\mu_N} }{d\Pb^{\b,N}_{\nu_N^{\rho_c (\cdot )} }}\big( (\eta_t)_{t\in [0,T]} \big)=
\frac{d \mu_N}{d\nu_N^{\rho_c (\cdot )}}  \le  e^{ N K_1(c)} $$
   for some positive $K_1(c)$
  that depends on $c$.  
   The right hand side of \eqref{es1} is bounded by
\begin{equation}
\label{te2}
-a\big(\delta  - T K_0 c\big)\; +\; K_1(c)\; +\; 
\frac{1}{N}\, \log \Pb^{\b,N}_{\nu_N^{\rho_c (\cdot )} } 
\Big[ \, \ \exp  \left (  aN\, \Big|\int_0^T \V_{N,\varepsilon}^{c,G,\Psi} (s,\eta_s) \, ds \Big| \right )  \Big]\; .
\end{equation}
  Since $e^{|x|}\le e^{x}+e^{-x}$ and 
\begin{equation}\label{maxlim}
\limsup N^{-1} \log \{a_N + b_N\}
\le  \max \{ \limsup  N^{-1} \log a_N\; , \;\limsup  N^{-1} \log b_N \},
\end{equation}
we may remove the absolute value in the third term of 
\eqref{te2}, provided our estimates
remain in force if we replace $G$ by $-G$. 
Denote by 
$$ (\cl_N)^s = \frac 1 2 \left ( \cl_N + \cl_N^{\star} \right ) $$
where  $ \cl_N^{\star}$ is the adjoint of  $ \cl_N$ in  $L^2(\nu_N^{\rho_c (\cdot )})$. 
By the Feynman-Kac 
formula, 
 \begin{equation}
\label{te3} \frac{1}{N}\, \log \Pb^{\b,N}_{\nu_N^{\rho_c (\cdot )} } 
\Big[ \,  exp  \left (  aN\,  \int_0^T \V_{N,\varepsilon}^{c,G,\Psi} (s,\eta_s) \, ds  \right )  \Big]\;  \le \frac{1}{ N}\int_0^T \lambda_{N,\epsilon}(G_s) \, ds\; ,
\end{equation}
where $\lambda_{N,\epsilon}(G_s)$ is the largest eigenvalue of  $\{N^2 (\cl_N)^s + Na
\V_{N,\varepsilon}^{c,G,\Psi} 
( G_s,\cdot)\}$. By the variational formula for the
largest eigenvalue, 
for each $s\in [0,T]$,  
 \begin {equation}
\label{tes1}  \frac 1 N 
\lambda_{N,\epsilon}(G_s)= 
\sup_{f} \Big\{\int a\V_{N,\varepsilon}^{c,G,\Psi} \big( G_s,\eta\big) f(\eta)
\nu_N^{\rho_c(\cdot)}(d\eta) \; +\; N \langle  \cl_N \sqrt{f} , 
\sqrt{f} \rangle_{\nu_N^{\rho_c (\cdot )}} \Big \}\; .
\end{equation}
In this formula the supremum is carried over all densities $f$
with respect to $\nu_N^{\rho_c (\cdot )}$.
 By  Lemma \ref{dirichlet}, since  \eqref {dirt2} gives a negative contribution,   it is enough,   to  get the result,  to choose $c$ such that $c<\frac{\delta}{TK_0}$ and
to show that, there exists $M>0$ that depends only on $G$ and $c$, such that, for all $a>0$
$$
\limsup_{\varepsilon \to 0}\limsup_{N\to \infty}
\sup_{f} \Big\{\int a\V_{N,\varepsilon}^{c,G,\Psi} \big( G_s,\eta\big) f(\eta)
\nu^{\rho_c (\cdot )}_N(d\eta) \; -\;  N\cd_{0,N}(\sqrt{f}, \nu_N^{\rho_c (\cdot )})\Big\}
\; \le\; M \;.
$$
We then let $a\uparrow \infty$.
Notice that for $N$ large enough the function $ \V_{N,\varepsilon}^{c,G,\Psi} \big( G_s,\eta\big) $ depends on the configuration $\eta$ only through
the variables $\{ \eta(x),\; x\in \L_{(1-c)N}\}$.    Since
$\rho_c$ is  equal to $\gamma_0$, in
$\L_c$, we   replace $\nu_N^{\rho_c (\cdot)}$
in the previous formula by $\nu^{\gamma_0}_N$ with respect to   which  the operator $\LL_{0,N}$
is reversible.    Therefore $ \cd_{0,N}(\cdot\, ,
 \nu^{\gamma_0}_N)$ is the Dirichlet form associated to the generator
 $\LL_{0,N}$.  Since the Dirichlet form is convex, it remains to show that
$$
\limsup_{\varepsilon \to 0}\limsup_{N\to \infty}
\sup_{f} \Big\{\int a\V_{N,\varepsilon}^{c,G,\Psi} \big( G_s,\eta\big) f(\eta)
\nu^{\gamma_0}_N(d\eta) \; -\;  N\cd_{0,N}(\sqrt{f}, \nu_N^{\gamma_0 (\cdot )})\Big\}
\; =\; 0,
$$
for any $a>0$.
This follows from the usual one block and two blocks  estimates
(cf. Chap 5 of  \cite{kl}).
\end{proof}

For $x=\pm N$, a configuration $\eta$ and $\ell \ge 1$, let
\begin{equation}\label{vneb}
W_{N}^{\pm,\ell}(\eta) = \left  | \eta^\ell(\pm N) - \rho_\pm\right |,
\end{equation}
  where,   see  \eqref {average},  $\eta^\ell(N) =\frac 1 {  \ell +1}\big\{\eta(N-\ell)+\cdots+\eta(N) \big\}  $
     and $\eta^\ell(-N) =\frac 1 {  \ell +1}\big\{\eta(-N+\ell)+\cdots+\eta(-N) \big\}  $.

\begin{proposition}
\label{seeb}
Fix a sequence
$\{\mu_N: N\geq 1\}$ of probability measures on $\cs_N$. For every $\delta>0$,
\begin{equation*}
\limsup_{\ell\to\infty}\limsup_{N\to\infty}\frac{1}{N} \, \log  \Pb^{\b,N}_{\mu_N}  \Big[
\,\int_0^T  W_{N}^{\pm,\ell}(\eta_s) ds  \, >\delta 
\Big] \;=\; -\infty\, .
\end{equation*}
\end{proposition}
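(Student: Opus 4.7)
\emph{Overall strategy.} My plan is to run the same Feynman--Kac scheme as in the proof of Proposition \ref{see1}, but with the boundary generator $\LL_{+,N}$ playing the role the bulk generator played there. I focus on the site $+N$; the case $-N$ is symmetric. Fix $c\in(0,1)$ small and a smooth profile $\theta:\bar\L\to(0,1)$ with $\theta(\pm 1)=\rho_\pm$ which is constantly equal to $\rho_+$ on $[1-c,1]$; in the end I will take $\ell/N\to 0$ before $\ell\to\infty$. Since $\theta$ is bounded away from $0$ and $1$, $\log(d\mu_N/d\nu_N^{\theta(\cdot)})\le K_1(\theta) N$ for a constant $K_1(\theta)$ independent of $\mu_N$ and $a$. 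Exponential Chebyshev then gives, for every $a>0$,
\begin{equation*}
\tfrac{1}{N}\log\Pb_{\mu_N}^{\b,N}\Big[\int_0^T W_N^{+,\ell}(\eta_s)\,ds>\delta\Big]\le -a\delta+K_1(\theta)+\tfrac{1}{N}\log\Es_{\nu_N^{\theta(\cdot)}}^{\b,N}\!\Big[\exp\Big(aN\!\int_0^T W_N^{+,\ell}(\eta_s)\,ds\Big)\Big].
\end{equation*}
By Feynman--Kac and the variational characterization of the principal eigenvalue, the last term is at most $T\lambda_N/N$ with
\begin{equation*}
\tfrac{\lambda_N}{N}=\sup_f\Big\{a\int W_N^{+,\ell}f\,d\nu_N^{\theta(\cdot)}-N\langle -\cl_N\sqrt f,\sqrt f\rangle_{\nu_N^{\theta(\cdot)}}\Big\},
\end{equation*}
the supremum being over $\nu_N^{\theta(\cdot)}$-densities. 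Lemma \ref{dirichlet} replaces $-\langle\cl_N\sqrt f,\sqrt f\rangle$ by $(1-o(1))\cd_{0,N}(\sqrt f,\nu_N^{\theta(\cdot)})+\cd_{+,N}(\sqrt f,\nu_N^{\theta(\cdot)})-C/N$.

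\emph{The boundary replacement bound.} The crux of the argument is to show that there exist a constant $C$ and a vanishing sequence $\mathrm{err}(\ell)\to 0$ such that, uniformly in densities $f$,
\begin{equation*}
\int W_N^{+,\ell}(\eta)\,f\,d\nu_N^{\theta(\cdot)}\le \mathrm{err}(\ell)+\tfrac{C\ell}{N}\cd_{0,N}(\sqrt f,\nu_N^{\theta(\cdot)})+\tfrac{C}{N}\cd_{+,N}(\sqrt f,\nu_N^{\theta(\cdot)})+o_N(1).
\end{equation*}
I decompose $\eta^\ell(N)-\rho_+=(\eta(N)-\rho_+)+(\eta^\ell(N)-\eta(N))$. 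For the first piece, a direct computation from the rates $c_+$ yields the pointwise identity $\LL_{+,N}(\eta(N))=\rho_+-\eta(N)$; combined with the reversibility of $\nu_N^{\theta(\cdot)}$ for $\LL_{+,N}$ (valid because $\theta\equiv\rho_+$ at site $N$), integration by parts and Cauchy--Schwarz give $|\int(\eta(N)-\rho_+)f\,d\nu_N^{\theta(\cdot)}|\le C\,\cd_{+,N}(\sqrt f,\nu_N^{\theta(\cdot)})^{1/2}$, which is absorbed into a linear $\cd_{+,N}/N$ contribution via $\sqrt x\le\varepsilon+x/(4\varepsilon)$. For the second piece, $\eta^\ell(N)-\eta(N)=(\ell+1)^{-1}\sum_{j=1}^\ell[\eta(N-j)-\eta(N)]$, and each difference telescopes into $j$ nearest-neighbor jumps across bonds of $[1-\ell/N,1]\subset[1-c,1]$, on which $\nu_N^{\theta(\cdot)}$ is exchange-invariant. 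The standard anti-symmetrization trick gives $|\int[\eta(y+1)-\eta(y)]f\,d\nu_N^{\theta(\cdot)}|\le(\text{bond Dirichlet})^{1/2}$, and two applications of Cauchy--Schwarz produce $|\int[\eta^\ell(N)-\eta(N)]f\,d\nu_N^{\theta(\cdot)}|\le C\sqrt{\ell\,\cd_{0,N}(\sqrt f,\nu_N^{\theta(\cdot)})}$, again linearized via the same inequality. The absolute value in $W_N^{+,\ell}$ is handled by splitting $|X|=X_++X_-$ into the two sign components (which are themselves written as averages of the kind above) or, equivalently, by $\int|X|f\,d\nu\le(\int X^2 f\,d\nu)^{1/2}$ together with the two linear estimates just obtained.

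\emph{Conclusion and main difficulty.} Plugging the boundary replacement bound into the variational formula and using Lemma \ref{dirichlet}, for $N$ large (compared to $\ell$ and $a$) the Dirichlet-form terms produced on the right are absorbed into the negative $-N[\cd_{0,N}+\cd_{+,N}]$ penalty, leaving $\lambda_N/N\le a\,\mathrm{err}(\ell)+o_N(1)$. Therefore
\begin{equation*}
\tfrac{1}{N}\log\Pb_{\mu_N}^{\b,N}\Big[\int_0^T W_N^{+,\ell}(\eta_s)\,ds>\delta\Big]\le-a\delta+K_1(\theta)+Ta\,\mathrm{err}(\ell)+o_N(1).
\end{equation*}
Sending $N\to\infty$, then $\ell\to\infty$, and finally $a\to\infty$ drives the right-hand side to $-\infty$, which is the claim. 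The main obstacle is the replacement bound of the middle paragraph: the quantity $|\eta^\ell(N)-\rho_+|$ is of order one pointwise and is not a smooth functional of the density, so one must combine the reversibility of $\LL_{+,N}$ with respect to $\nu^{\rho_+}$ (to pin the mean of $\eta(N)$ at $\rho_+$) with the bulk exchange Dirichlet form (to spread this control into the $\ell$-window adjacent to the reservoir); this is the boundary analogue of the usual one-block estimate, and it is where the technical work lies.
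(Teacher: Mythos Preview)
Your Feynman--Kac/variational reduction matches the paper's proof up to the point where one has to show
\[
\sup_f\Big\{a\!\int W_N^{+,\ell}\,f\,d\nu_N^{\theta(\cdot)}-N\cd_{0,N}(\sqrt f,\nu_N^{\theta(\cdot)})-N\cd_{+,N}(\sqrt f,\nu_N^{\theta(\cdot)})\Big\}\;\le\; M,
\]
and the paper then passes to $\nu_N^{\rho_+}$ and closes with a boundary one-block estimate (spectral-gap reduction to the invariant measure $\nu^{\rho_+}$ on the $\ell$-box plus the law of large numbers $E_{\nu^{\rho_+}}|\eta^\ell(N)-\rho_+|\to 0$). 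Your ``boundary replacement bound'' is a genuinely different route, but as written it has a gap.

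The moving-particle/anti-symmetrization computation and the use of $\LL_{+,N}\eta(N)=\rho_+-\eta(N)$ give control of $\big|\int(\eta^\ell(N)-\rho_+)f\,d\nu\big|$, i.e.\ of the \emph{signed} expectation. What enters the variational formula is $\int|\eta^\ell(N)-\rho_+|\,f\,d\nu$, with the absolute value \emph{inside}. Your two suggestions do not bridge this: (i) writing $|X|=X_++X_-$ produces nonlinear functionals of $\eta$ to which the exchange/flip integration by parts does not apply; (ii) the Cauchy--Schwarz route $\int|X|f\,d\nu\le(\int X^2 f\,d\nu)^{1/2}$ requires bounding $\int(\eta^\ell(N)-\rho_+)^2 f\,d\nu$, i.e.\ all the off-diagonal correlations $\int(\eta(N-i)-\rho_+)(\eta(N-j)-\rho_+)f\,d\nu$, which are \emph{not} covered by ``the two linear estimates just obtained''. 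Without this, the claimed display with an $\mathrm{err}(\ell)\to 0$ does not follow. (The diagonal part of $X^2$ does give $O(1/\ell)$, but the off-diagonal part needs a separate moving-particle argument for each pair, treating one factor as a bounded weight; this can be made to work, but it is exactly the missing step.)

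In short: either carry out the $X^2$-route properly by bounding the off-diagonal correlations via the same moving-particle scheme (now with the extra bounded factor $(\eta(N-j)-\rho_+)$ riding along), or follow the paper and reduce, via convexity of the Dirichlet form and the reversibility of $\LL_{0,N}+\LL_{+,N}$ under $\nu^{\rho_+}$, to a one-block estimate on $\{0,1\}^{\ell+1}$ whose invariant measure is product Bernoulli$(\rho_+)$ and conclude by the law of large numbers.
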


\begin{proof}
Consider first the limit  with the term  $W_{N}^{+,\ell}$.
Fix a smooth function $\gamma:\overline{\L}\to (0,1)$ such that $\gamma(-1)=\rho_-$, and $\gamma(u)=\rho_+$ for
$u\in [0,1]$. Since the Radon-Nikodym derivative 
$\frac{d \mu_N}{d\nu_N^{\g (\cdot )}}$ is bounded by $\exp(N K_1)$ for some positive
constant $K_1$, it is enough to show that
$$
\limsup_{\ell\to\infty}
\limsup_{N\to\infty}\frac{1}{N} \, \log  \Pb^{\b,N}_{\nu_N^{\gamma(\cdot)}}  \Big[
\,\int_0^T W_{N}^{+,\ell}(\eta_s)  ds 
\, >\delta 
\Big] \;=\; -\infty\, .
$$
We   follow the same steps as in Proposition \ref {see1}.  Applying   Chebyshev exponential inequality and Feynman-Kac formula, 
the expression in the last limit is bounded for all $a>0$ by
\begin{equation} \label {et4a}
-a\delta\; +\;  
\frac{T}{ N} \, \widetilde \lambda_{N,\epsilon}(a) \; ,
\end{equation}
where for all $a>0$, $\frac{1}{N} \widetilde\lambda_{N,\epsilon}(a)$ is the largest eigenvalue of the 
$\nu_N^{\gamma (\cdot )}$-reversible operator 
$$
f\to N (\cl_N)^s (f)+ a  
\Big(W_{N}^{+,\ell} \Big) f .
$$ 
Here $(\cl_{N})^s$ is the symmetric part of the operator $\cl_N$
in $L^2(\nu_N^{\gamma (\cdot )})$. By the variational formula for the
largest eigenvalue, we have
\begin{equation*}
\begin{aligned}
&\frac{1}{N}\widetilde\lambda_{N,\epsilon}(a)\,=\,
\sup_{f} \Big\{\int a W_{N}^{+,\ell}(\eta)  f(\eta)
\nu_N^{\gamma(\cdot)}(d\eta) \\
&\qquad\qquad\qquad\qquad\qquad\qquad\qquad
\; +\;  N \, < \cl_N\sqrt{f} , 
\sqrt{f} >_{\nu_N^{\gamma (\cdot )}} \Big \}\; .
\end{aligned}
\end{equation*}
In this formula the supremum is carried over all densities $f$
with respect to $\nu_N^{\gamma (\cdot )}$.
By Lemma \ref{dirichlet}, we just need  
to show that, there exists $M>0$, such that, for all $a>0$
\begin{equation*}
\begin{aligned}
&\limsup_{\ell \to \infty}\limsup_{N\to \infty}
\sup_{f} \Big\{\int a W_{N}^{+,\ell}(\eta)  f(\eta)
\nu_N^{\gamma(\cdot)}(d\eta)\\
&\qquad\qquad\qquad\qquad\qquad
\; -\;  N\cd_{0,N}(\sqrt{f}, \nu_N^{\gamma (\cdot )})
-N \cd_{+,N}(\sqrt{f}, \nu_N^{\gamma (\cdot )})\Big\}
\; \le\; M \; .
\end{aligned}
\end{equation*}
Recall that the profile $\gamma$ is constant equal to $\rho_+$ on $[0,1]$.  Since  $W_{N}^{+,\ell}(\eta)$ depends only on coordinates in a box
$\Lambda_\ell(N)$, we   replace $\nu_N^{\gamma (\cdot)}$
in the previous formula by $\nu^{\rho_+}_N$. On the other hand, $\nu^{\rho_+}_N$ is
reversible for $\LL_{0,N}+ \LL_{+,N}$   and therefore $ \cd_{0,N}(\cdot\, ,
\nu^{\rho_+}_N)+ \cd_{+,N}(\cdot\, ,\nu^{\rho_+}_N)$ is the Dirichlet form associated to the generator
$\LL_{0,N}+\LL_{+,N}$. Since the Dirichlet form is convex, it remains to show that
\begin{equation*}
\begin{aligned}
&\limsup_{\ell \to \infty}\limsup_{N\to \infty}
\sup_{f} \Big\{\int a W_{N}^{+,\ell}(\eta)  f(\eta)
\nu_N^{\rho_+}(d\eta)\\
&\qquad\qquad\qquad\qquad\qquad
\; -\;  N\cd_{0,N}(\sqrt{f}, \nu_N^{\rho_+})
-N \cd_{+,N}(\sqrt{f}, \nu_N^{\rho_+})\Big\} 
\; =\; 0.
\end{aligned}
\end{equation*}
for any $a>0$.
This follows from the law of large numbers by applying  the same device  used in the proof of the one block and two blocks estimates,
(cf. Chap 5 of \ \cite{kl},  and Lemma 3.12, Lemma 3.13 in \cite{mo2}).
\end{proof}
\subsection{Energy estimate} We prove in this subsection an energy estimate  which is   one of the main ingredient in the proof of large 
deviations and hydrodynamic limit. It allows to prove Lemma \ref{lem3} and 
to exclude paths with infinite energy in the large deviation regime. 
For $\delta >0$, $G\in C^\infty_c ([0,T]\times \L)$ 
define 
 \begin{equation}\label{cccg}
\tq^\d_G(\pi)\;=\;
\int_0^T dt\<\pi_t, \nabla G_t \> -\d \int_0^T dt\<\sigma(\pi_t) G_t, G_t \> \,,
\end{equation}
\begin{equation}\label{ccc}
\tq^\d (\pi)\; =\;  \sup_{G\in C^\infty_c([0,T]\times \L)} \Big\{\tq^\d_G(\pi)\Big\}\;. 
\end{equation}
  Notice that
$$
\tq^\d (\pi)=\frac1{2\d} \mc Q (\pi),
$$
where  $ \mc Q (\cdot)$   is defined   in \eqref {tm3}.
 
For a function $m$ in $\mc M$, let $m^\epsilon:\Lambda\to\bb R_+$
be given by
\begin{equation*}
m^\epsilon (u) \;=\; \frac 1{2\epsilon}
\int_{[u-\epsilon, u+\epsilon] \cap \Lambda} m(v) \, dv\;. 
\end{equation*}
When  $u\in [-1+\epsilon,1-\epsilon]$,
$m^\epsilon (u)=(m * \iota_\epsilon)(u)$, where $\iota_\epsilon$ is the approximation of the identity defined by
$$
\iota_\epsilon (u)\;=\; \frac{1}{2\epsilon}\1 \{[-\epsilon,\epsilon]\}(u)\; . 
$$
 
\begin{lemma}
\label{hs02}  There exists a positive constant $C_1$  depending only on $\rho_{\pm}$ so that   for any given  $\d_0 >0$,  
for  any $\delta$,  $0\le \delta\le \delta_0$,    for any sequence $\{\eta^N \in \mc S_N : N\ge 1\}$ 
and  for any $G\in\mc C^{\infty}_c([0,T]\times \L)$,  we have  \begin{equation*}
\limsup_{\epsilon \to 0} \limsup_{N\to\infty} \frac 1{N}  \log
\bb P^{\b,N}_{\eta^N} \Big [ 
\exp\big({\delta \, N \tq_G^{\d_0}(\pi^N  *  \iota_\epsilon)\big)} \Big] 
\; \le \; C_1( T+1)\;.
\end{equation*}
\end{lemma}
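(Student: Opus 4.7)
The plan follows the standard energy-estimate scheme (as in \cite{blm,qrv}). First, replace the deterministic starting law $\delta_{\eta^N}$ by a reference Bernoulli product $\nu^\theta_N$ with smooth profile $\theta\colon\ov\L\to(0,1)$, $\theta(\mp1)=\rho_\mp$: since $\nu^\theta_N(\{\eta^N\})\ge e^{-C(\rho_\pm)N}$, this costs only an additive constant in $\frac{1}{N}\log$. The time-inhomogeneous Feynman--Kac inequality (as used in the proof of Proposition \ref{see1}) then bounds the exponential moment by $e^{\int_0^T\lambda_s\,ds}$, where $\lambda_s$ is the top eigenvalue in $L^2(\nu^\theta_N)$ of $N^2\cl_N^s+\delta N F_s$, with $F_s(\eta):=\<\pi^N(\eta)*\iota_\e,\nabla G_s\>-\d_0\<\s((\pi^N*\iota_\e)(\eta)),G_s^2\>$. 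By the variational formula, it suffices to show
$$\sup_{\|f\|_{L^2(\nu^\theta_N)}=1}\Bigl\{\delta N\!\int F_s\,f^2\,d\nu^\theta_N-N^2\mc D(f)\Bigr\}\le O(N)\quad\text{uniformly in }G,\,s\in[0,T],$$
where $\mc D(f):=-\<f,\cl_N f\>_{\nu^\theta_N}$. Lemma~\ref{dirichlet} with $a=1/2$ gives $\mc D(f)\ge \tfrac12\cd_{0,N}(f,\nu^\theta_N)-2C_0/N$, so we may replace $\mc D(f)$ by $\tfrac12\cd_{0,N}(f,\nu^\theta_N)$ at the cost of an $O(N)$ additive error.

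\textbf{Discrete IBP and exchange identity.} Set $H_s:=G_s*\iota_\e$. For $\e$ small enough that $H_s$ has compact support in $(-1,1)$, we have $\<\pi^N*\iota_\e,\nabla G_s\>=\<\pi^N,\nabla H_s\>$ by a change of variables in the convolution. A discrete summation by parts then yields
$$\int\<\pi^N,\nabla H_s\>\,f^2 d\nu^\theta_N=-\sum_x H_s(x/N)\!\int[\eta(x)-\eta(x-1)]f^2 d\nu^\theta_N+o(1)\|f\|^2.$$
The change of variables $\eta\mapsto\eta^{x-1,x}$ in $\nu^\theta_N$ (with Radon--Nikodym $1+O(\|\nabla\theta\|_\infty/N)$) turns each bond integral into $\int\eta(x-1)(1-\eta(x))[f^2(\eta^{x-1,x})-f^2(\eta)]d\nu^\theta_N+O(N^{-1})\|f\|^2$. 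Factoring $f^2-(f^{x-1,x})^2=(f-f^{x-1,x})(f+f^{x-1,x})$ and applying Young's inequality with parameter $A>0$, one obtains
$$\Bigl|\int\<\pi^N,\nabla H_s\>f^2 d\nu^\theta_N\Bigr|\le\frac{A}{2}\cd_{0,N}(f,\nu^\theta_N)+\frac{1}{A}\sum_x H_s^2(x/N)\!\int\chi_x(\eta)\,f^2 d\nu^\theta_N+O(1)\|f\|^2,$$
where $\chi_x(\eta):=\eta(x-1)(1-\eta(x))+\eta(x)(1-\eta(x-1))$.

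\textbf{One-block replacement and absorption.} Since the Bernoulli-$\rho$ expectation of $\chi_x$ equals $\s(\rho)$, a one-block-style argument (in the spirit of the proof of Proposition \ref{see1}: replace $\chi_x$ by $\s(\eta^{\lfloor\e N\rfloor}(x))\simeq\s((\pi^N*\iota_\e)(x/N))$ at a cost controlled by local exchange Dirichlet forms) yields
$$\frac{1}{A}\sum_x H_s^2(x/N)\!\int\chi_x f^2 d\nu^\theta_N\le\frac{N}{A}\<\s(\pi^N*\iota_\e),H_s^2\>_{f^2 d\nu}+o_{N,\e}(1)\cd_{0,N}(f)+o(1),$$
with $o_{N,\e}(1)$ vanishing as $N\to\infty$ then $\e\to0$. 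Choose $A=N/\d_0$. Since $\|H_s-G_s\|_\infty\le\|\nabla G_s\|_\infty\,\e\to0$, the right-hand side becomes $\d_0\<\s(\pi^N*\iota_\e),G_s^2\>_{f^2 d\nu}+o(1)$, exactly cancelling the negative quadratic part $-\d_0\<\s(\pi^N*\iota_\e),G_s^2\>$ inside $\delta N\int F_s f^2 d\nu$. Simultaneously, the Young ``good'' term $\tfrac{\delta NA}{2}=\tfrac{\delta N^2}{2\d_0}\le\tfrac{N^2}{2}$ (using $\delta\le\d_0$) is absorbed by $\tfrac{N^2}{2}\cd_{0,N}(f)$ from Step 1. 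All leftover contributions are $O(N)$, giving $\sup_f\{\ldots\}\le O(N)$ and hence $\frac{1}{N}\log\Es^{\b,N}_{\eta^N}[\cdot]\le C_1(T+1)$, with $C_1$ depending only on $\r_\pm$ (through the constants of Lemma~\ref{dirichlet} and the fixed reference profile $\theta$).

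\textbf{Principal obstacle.} The delicate point is the one-block replacement in Step 3: the error must be absorbable by $\cd_{0,N}(f)$ with a constant vanishing in the iterated limit $N\to\infty$, $\e\to0$. A crude bound like $\chi_x\le1$ would leave a $\|G\|_{L^2}$-dependent remainder, violating the required uniformity of $C_1$ in $G$. The parameter $A=N/\d_0$ must therefore be tuned precisely so that the Young quadratic is annihilated by the $\d_0$-penalty in $\tq^{\d_0}_G$, which is exactly the role played by the non-convex quadratic subtraction in the definition of $\tq^{\d_0}_G$.
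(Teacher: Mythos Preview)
Your strategy is essentially the same as the paper's --- Feynman--Kac, variational principle, a Young split of the gradient term producing the microscopic mobility $\chi_x=(\eta(x)-\eta(x-1))^2$, and cancellation against the $\sigma$-penalty in $\tq_G^{\d_0}$ --- but the two proofs order the steps differently, and this matters for the bookkeeping.

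The paper first restricts, via the superexponential estimate (Proposition~\ref{see1}), to the set $B_{N,\epsilon,\d_0}^{G,\Psi_1}$ on which the replacement $\sigma(\pi^N*\iota_\epsilon)\leftrightarrow \tau_x\Psi_1$ already holds up to $1/\d_0^2$. Only \emph{then} is Feynman--Kac applied, and the resulting variational problem has the microscopic $\tau_x\Psi_1$ appearing both from the Young step on the gradient term (called $I_1$) and from the penalty (called $I_2$). These cancel algebraically, yielding $I_1-I_2\le \tfrac{N}{2}\cd_{0,N}(f)+O(1)$ with constants independent of $G$; the $\tfrac{N}{2}\cd_{0,N}$ is then exactly absorbed by Lemma~\ref{dirichlet} with $b=1/2$. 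No Dirichlet form is needed for a replacement step inside the variational formula.

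You instead perform the one-block replacement \emph{inside} the variational formula. This is legitimate, but it consumes a fixed positive fraction $cN^2\cd_{0,N}(f)$ of the Dirichlet form (the standard one-block reads $\sup_f\{a\int V f^2 - N\cd_{0,N}(f)\}\to 0$, so controlling $\delta\delta_0 N\int V f^2$ needs $cN^2\cd_{0,N}$ for some fixed $c>0$). With your choice $a=1/2$ in Lemma~\ref{dirichlet} and $A=N/\d_0$, the Young cost is exactly $\tfrac{\delta N^2}{2\d_0}\le \tfrac{N^2}{2}$ and nothing is left over for the replacement when $\delta=\d_0$. The fix is trivial --- take $a<1/2$ in Lemma~\ref{dirichlet} to reserve, say, $\tfrac{N^2}{4}\cd_{0,N}$ --- but as written your constants do not close at the endpoint $\delta=\d_0$. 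Note also that your displayed inequality ``$\le \tfrac{N}{A}\langle\sigma,H_s^2\rangle_{f^2d\nu}+o_{N,\epsilon}(1)\cd_{0,N}(f)+o(1)$'' cannot hold pointwise in $f$; it is only a statement about the supremum over $f$ after subtracting a Dirichlet-form term, which is exactly the issue above.

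In short: the route is sound, but the paper's ordering (superexponential restriction first) is what lets the constants match exactly without reserving Dirichlet form; your ordering works too once you leave yourself a margin in Lemma~\ref{dirichlet}.
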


\begin{proof}
Assume without loss of generality that $\epsilon$ is small enough  so that the support of $G (\cdot, \cdot) $ is  contained in $[0,T] \times [-1+\epsilon,
1-\epsilon]$.  Let   $\theta:\overline\L\to (0,1)$  be  a smooth function such that $\theta(\mp 1)= \rho_{\mp}$.  Since $\nu^{\t(\cdot)}_N(\eta^N) \ge \exp\{- C_1' N\}$ for some finite constant $C_1'$ depending only on $\t$,  it is enough to
prove the lemma with $\bb P^{\b,N}_{\nu^{\t(\cdot)}_N}$ in place of $\bb P^{\b,N}_{\eta^N}$.

Set  $\Psi_1 (\eta) = [\eta(1)- \eta(0)]^2$   and  note  that  $\widetilde \Psi_1(a)= E^{\nu^{a}}[\Psi_1] =\sigma(a) =2 a(1-a)$,   where $\nu^{a}$ is the Bernoulli  measure with  parameter $a \in [0,1]$.
 Denote 
$B_{N,\epsilon, \d_0}^{G,\Psi_1}$    the set of trajectories $(\eta_t)_{t\in [0,T]}$  so that 
\begin{equation*}
B_{N,\epsilon,\d_0}^{G,\Psi_1} \;=\; \Big\{ \eta_\cdot \in D([0,T], {\mc S}_N) : 
\Big| \int_0^T V_{N,\epsilon}^{G^2,\Psi_1}(t,\eta_t) dt \Big| \le \frac1{\delta_0^2}\Big\} \;,
\end{equation*}
where $V_{N,\epsilon}^{G^2,\Psi_1}$ is defined in \eqref{vne}.
  By \eqref{maxlim} and  Proposition  \ref{see1},  it is enough to show 
\begin{equation*}
\limsup_{\epsilon \to 0} \limsup_{N\to\infty} 
  \frac 1 N \log
\bb P^{\b,N}_{\nu^{\t(\cdot)}_N} \Big [ 
e^{\big({\delta\, N \tq_G^{\d_0}(\pi^N  *  \iota_\epsilon)\big)}}\1\{ B_{N,\epsilon,\d_0}^{G,\Psi_1}  \} \Big] 
\; \le \; C_1 (T+1)\;.
\end{equation*}
Recalling the definition  $\tq_G^\d$, see \eqref {cccg},   we have
\begin{equation*}
\int_0^T dt\<\pi^N_t * \iota_\epsilon, \nabla G \> = 
\int_0^Tdt \sum_{x=-N+1}^{N-1} \{\eta_t(x) - \eta_t(x+1)\} \, G_t(x/N) + O_G(\ve).
\end{equation*}
Further  on  the set  $ B_{N,\epsilon,\d_0}^{G,\Psi_1}$
\begin{equation*}
\begin{aligned}
\d_0 \int_0^T dt\<\sigma(\pi^N_t * \iota_\epsilon), G^2_t \>   &\ge 
\d_0 \int_0^T dt \, \frac 1{N} \sum_{x=-N+1}^{N-1} 
G^2_t(x/N)  \, \tau_x \Psi_1 (\eta_t) \\
\ &\ \ 
- {\delta_0} O_{G^2}(N,\ve)  - \frac1{\d_0}\, ,
\end{aligned}
\end{equation*}
where $O_G(\epsilon)$ is absolutely bounded by a constant which
vanishes as $\epsilon\downarrow 0$ and $O_{G^2}(N,\ve)$ is is absolutely bounded by a constant which
vanishes as $N\uparrow\infty$.
Therefore to conclude the proof  it is enough  to show that  
 \begin{equation}\label{escc1}
 \limsup_{N\to\infty} \frac 1{N}  \log
\bb P^{\b,N}_{\nu^{\theta(\cdot)}_N}  \Big [  \exp\Big( N\, \int_0^Tdt \, V_G^\delta(t,\eta_t) \Big) \Big] 
\; \le \; C_1 T \;  
\end{equation}
for any $\d\le \d_0$, 
where  $$V_G^\delta(t,\eta) 
=   \delta\sum_{x=-N+1}^{N-1}  
 G_t(x/N) [\eta(x) - \eta(x+1) ]
-\frac {\d^2}{N} \sum_{x=-N+1}^{N-1} 
G^2_t(x/N) \, \tau_x\Psi_1(\eta). 
$$
Now, observe that $V_G^\delta=V_{\d G}^1$. 
Therefore, to prove the lemma, we need to show that for any smooth function $G$,
\begin{equation}\label{escc1b}
 \limsup_{N\to\infty} \frac 1{N}  \log
\bb P^{\b,N}_{\nu^{\theta(\cdot)}_N}  \Big [  \exp\Big( N\, \int_0^Tdt \, V_G^1 (t, \eta_t) \Big) \Big] 
\; \le \; C_1 T \;  
\end{equation}
for some constant $C_1$ that not depends on $G$.
By Feynman-Kac formula and the same arguments used in the proof of  Proposition  \ref{see1}, the expression
of the limit in the right hand side of \eqref{escc1b} is bounded above by
\begin{equation*}
\int_0^Tdt \, \sup_{f} \Big\{   \int V_G^1 (t, \eta) f^2(\eta) \nu^{\t(\cdot)}_N
(d\eta) \; + \; N \< \mc L_N f , f\>_{\nu^{\t (\cdot)}_N} \Big\}\;,
\end{equation*}
where the supremum is  over all functions $f$ in $L^2(\nu^{\t(\cdot)}_N)$
such that $\<f,f\>_{\nu^{\t(\cdot)}_N} =1$. By Lemma \ref {dirichlet}, we  replace
$N \< \mc L_N f , f\>_{\nu^{\t(\cdot)}_N}$ by $- N  (1-b) {\cd}_{0,N} \big({f},\nu_N^{\theta(\cdot)} \big) +  \frac { C_0 } b $,
where $b\in (0,1)$ is arbitrarily chosen  and $C_0$  is a   constant   depending only on $\rho_{\pm}$.  
It remains, therefore, to show that
\begin{equation}\label{escc2}
  \limsup_{N\to\infty}  \int_0^Tdt \, \sup_{f} \Big\{   \int V_G^1 (t, \eta) f^2(\eta) \nu^{\t(\cdot)}_N
(d\eta) \; - N  (1-b) {\cd}_{0,N} \big({f},\nu_N^{\theta(\cdot)} \big)   \Big\}\
\; \le \;  C_1  T. 
\end{equation}
 We  split 
$$\int V_G^1 (t, \eta) f^2(\eta) \nu^{\t(\cdot)}_N
(d\eta)= I_1-I_2, $$  
where 
$$ I_1=\sum_{x=-N+1}^{N-1} G_t(x/N) \int  \{\eta (x) - \eta (x+1)\} 
\, f^2(\eta)  \, d\nu_N^{\t(\cdot)} (\eta), $$
$$ I_2=\frac {1}{N} \sum_{x=-N+1}^{N-1} G^2_t(x/N) \int     f^2(\eta) \, \tau_x\Psi_1(\eta) \, d\nu_N^{\t(\cdot)} (\eta). $$
We   estimate  $I_1$ in term of $I_2$ and ${\cd}_{0,N} \big({f},\nu_N^{\theta(\cdot)}) $.
By changing  variables  $\eta' = \eta^{x,x+1}$,  we have that 
\begin{equation}
\label{mo05}
\begin{split} & I_1=    
 \frac{1}{2} \sum_{x=-N+1}^{N-1} G_t(x/N) \int  \{\eta (x) - \eta (x+1)\} 
\, \{ f^2(\eta)  - f^2(\eta^{x,x+1})\} \, d\nu_N^{\t(\cdot)} (\eta) \\
& \quad
+\frac{1}{2} \; \sum_{x=-N+1}^{N-1} G_t(x/N)\,  \int  \{\eta (x) - \eta (x+1)\} \, R_N(x,x+1;\t,\eta)
\, f^2(\eta)   \, d\nu_N^{\t(\cdot)} (\eta)\;,
\end{split}
\end{equation}
where
$
R_N(x,x+1;\t,\eta)
$
is defined in \eqref{reste}. 
For  the first term of \eqref {mo05}, by inequality \eqref{sch-ineq} and Taylor expansion,   we have
\begin{equation} \label {ett5}
\begin{aligned} & \frac{1}{2} \sum_{x=-N+1}^{N-1} G_t(x/N) \int  \{\eta (x) - \eta (x+1)\} 
\, \{ f^2(\eta)  - f^2(\eta^{x,x+1})\} \, d\nu_N^{\t(\cdot)} (\eta)\cr & \le 
 \frac{aN}{4} {\cd}_{0,N} \big({f},\nu_N^{\theta(\cdot)} \big)\\
&\qquad
 +\,
\frac{1}{4aN}\sum_{x=-N+1}^{N-1} G^2_t(x/N)   \int   \tau_x\Psi_1(\eta) \,\big[ f(\eta)+ f(\eta^{x,x+1})\big]^2  
\, d\nu_N^{\t(\cdot)} (\eta)\\
& \ \ \le \frac{aN}{4}{\cd}_{0,N} \big({f},\nu_N^{\theta(\cdot)} \big) \, +\,   \frac1{aN}\, C(G) \\
&\qquad
\; +\; 
\frac{1}{aN} \sum_{x=-N+1}^{N-1}  G^2_t(x/N) \int     \tau_x\Psi_1(\eta) \,f^2(\eta) 
\, d\nu_N^{\t(\cdot)} (\eta)\;
\end{aligned}
\end{equation}
where $C(G)$ is some constant that depends on $G$.
For the second term of  \eqref {mo05},  by   \eqref{sch-ineq} and  Taylor expanding $R_N$   we have that
\begin{equation}
\label{et4} \begin{aligned} & \Big| \frac{1}{2} \; \sum_{x=-N+1}^{N-2} G_t(x/N)\,  \int  \{\eta (x) - \eta (x+1)\} \, R_N(x,x+1;\t,\eta)
\, f^2(\eta)   \, d\nu_N^{\t(\cdot)} (\eta)\Big| \cr & \le 
{C\, a} + \frac{1}{N\, a}\sum_{x=-N+1}^{N-1} \int G_t(x/N)^2 \,  \tau_x\Psi_1(\eta) \, f^2(\eta)   \, d\nu_N^{\t(\cdot)} (\eta)\;,\cr & =  {C\, a} + \frac{1}{ a} I_2
\end{aligned}
\end{equation}
for all $a>0$, for some positive constant $C$  depending only on $\rho_{\pm}$.     
Taking into account \eqref {mo05}, \eqref {et4} and \eqref {ett5}  we have
\begin{equation} \label {ett6}      I_1  \le    \frac{2}{ a} I_2+ 
\frac{aN}{4}{\cd}_{0,N} \big({f},\nu_N^{\theta(\cdot)} \big)   +  {C\, a}  + \frac1{aN}\, C(G)  
\end{equation}
We conclude the proof, by taking  $a = {2}$ and $b= \frac 12  $ in \eqref  {escc2}.\end{proof}

 The following corollary allows to  show  Lemma \ref{lem3}.  

\begin{corollary}
\label{cor03} 
Fix a sequence $\{G_j : j\ge 1\}\subset \mc C^{\infty}_c([0,T]\times \L)$,   
 $\delta_0 >0$ and a sequence $\{\eta^N \in {\mc S}_N : N\ge 1\}$ of configurations.
There exists a positive constant $C_1$  depending only on the values  $\rho_{\mp}$,
such that for any $0<\delta\le \d_0$ and any $k\ge 1$
\begin{equation}\label{aaa}
\limsup_{\epsilon \to 0} \limsup_{N\to\infty} \frac 1{N}  \log
\bb P^{\b,N}_{\eta^N} \Big [ 
\exp\big({\delta \, N \sup_{1\le j\le k}\tq_{G_j}^{\d_0}(\pi^N  *  \iota_\epsilon)\big)} \Big] 
\; \le \; C_1 (T+1)\;.
\end{equation}
\end{corollary}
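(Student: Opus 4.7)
The plan is to reduce the statement to Lemma \ref{hs02} by the standard device of bounding a supremum of nonnegative quantities by their sum, and then exploiting the $\limsup$-of-a-sum inequality \eqref{maxlim}.

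First I would use the elementary bound $\sup_{1\le j\le k} a_j \le \sum_{j=1}^k a_j$ valid for nonnegative reals, applied to $a_j = \exp\bigl(\delta N\, \tq_{G_j}^{\d_0}(\pi^N * \iota_\epsilon)\bigr)$. This gives
\begin{equation*}
\exp\Big(\delta N \sup_{1\le j\le k}\tq_{G_j}^{\d_0}(\pi^N * \iota_\epsilon)\Big)
\;\le\; \sum_{j=1}^k \exp\bigl(\delta N\, \tq_{G_j}^{\d_0}(\pi^N * \iota_\epsilon)\bigr).
\end{equation*}
Taking expectation under $\bb P^{\b,N}_{\eta^N}$ and the two successive limits $\epsilon\to 0$, $N\to\infty$, we reduce to the claim that for each fixed $j$,
\begin{equation*}
\limsup_{\epsilon \to 0} \limsup_{N\to\infty} \frac 1{N} \log
\bb E^{\b,N}_{\eta^N}\bigl[\exp\bigl(\delta N\, \tq_{G_j}^{\d_0}(\pi^N * \iota_\epsilon)\bigr)\bigr] \;\le\; C_1(T+1),
\end{equation*}
with $C_1$ depending only on $\rho_\pm$. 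This is exactly Lemma \ref{hs02} applied to $G_j$, using that $\delta\le\d_0$ is the range assumed there (the constant $C_1$ produced by Lemma \ref{hs02} depends only on $\rho_\pm$, not on $G$ nor on $\delta\le \d_0$, so it is uniform in $j$).

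Finally I would iterate the inequality \eqref{maxlim} to conclude that
\begin{equation*}
\limsup_{N\to\infty} \frac 1 N \log\Big(\sum_{j=1}^k a^{(j)}_{N,\epsilon}\Big) \;\le\; \max_{1\le j\le k}\limsup_{N\to\infty} \frac 1 N \log a^{(j)}_{N,\epsilon},
\end{equation*}
with $a^{(j)}_{N,\epsilon} = \bb E^{\b,N}_{\eta^N}[\exp(\delta N\, \tq_{G_j}^{\d_0}(\pi^N * \iota_\epsilon))]$, and then take $\limsup_{\epsilon\to 0}$. Each maximum is bounded by $C_1(T+1)$ thanks to Lemma \ref{hs02}, yielding \eqref{aaa}.

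There is no real obstacle here; the only subtle point is that the bound produced by Lemma \ref{hs02} must be uniform in the test function $G$, which is the case: inspecting the proof of Lemma \ref{hs02} one sees that the constant $C_1$ controlling $\frac{C_0}{b}$ depends only on $\rho_\pm$ (through $\theta$), and the $G$-dependent pieces $C(G)$, $O_G(\ve)$, $O_{G^2}(N,\ve)$ are absorbed in the limits $N\to\infty$, $\epsilon\to 0$. Hence the same $C_1$ works simultaneously for the finite family $G_1,\dots,G_k$, so $k$ does not appear in the final bound.
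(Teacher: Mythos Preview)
Your proof is correct and follows essentially the same route as the paper's own argument: bound $\exp(\delta N\sup_j \tq_{G_j}^{\d_0})=\sup_j\exp(\delta N\,\tq_{G_j}^{\d_0})\le\sum_j\exp(\delta N\,\tq_{G_j}^{\d_0})$, apply \eqref{maxlim} to reduce to a maximum over $j$, and then invoke Lemma~\ref{hs02} for each fixed $G_j$. The paper's proof is just a two-line version of what you wrote out, and your remark about the $G$-independence of the constant $C_1$ is exactly the uniformity needed.
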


\begin{proof}
{}From \eqref{maxlim}, the limit in \eqref{aaa} is bounded above by
$$
\max_{1\le j\le k}\Big\{
\limsup_{\epsilon \to 0} \limsup_{N\to\infty} \frac 1{N}  \log
\bb P^{\b,N}_{\eta^N} \Big [ 
\exp\big({\delta \, \, N \tq_{G_j}^{\d_0}(\pi^N  *  \iota_\epsilon)\big)} \Big]\Big\}.   
$$
By  Lemma \ref{hs02} the thesis follows. 
\end{proof}

\section{Hydrodynamic and hydrostatic limits}
\label{hydro}
We prove in this section the hydrodynamic and  hydrostatic limit for our system. The proof is based on the method introduced
in \cite{gpv}  
 for  the hydrodynamic limit  and in \cite{flm}  for hydrostatic, 
  taking   into account, as explained in the introduction,   the lack of comparison and  maximum principle of \eqref {eq:1}.
\subsection{The steps to prove Theorem \ref{th-hy}}

Following \cite{gpv} we divide the proof of the hydrodynamic behavior in
three steps: tightness of the measures $(Q_{\mu_N}^{\b,N}) $, an energy estimate to provide  the needed regularity for 
functions in the support of any limit point of the sequence $(Q_{\mu_N}^{\b,N})$, 
and   identification of the  support of limit point of the sequence $(Q_{\mu_N}^{\b,N})$ as weak
solution of \eqref{eq:1}.  We then refer to \cite{kov}, Chapter 4, that present
arguments, by now standard, to deduce the hydrodynamic behavior of the empirical measures from the preceding 
results and the uniqueness of the weak solution to equation \eqref{eq:1}. 
 
\begin{lemma} {\bf (Tightness)}
The sequence $(Q_{\mu_N}^{\b,N})$  is tight and all its limit points
$Q^{\beta,*}$ are concentrated on   the following  set 
\begin{equation}
\label{abscont}
Q^{\beta,*}\Big\{ \pi \, :\, 0\le \pi_t(u)\le 1, \quad  t \in [0,T], \quad u \in [-1,1]  \Big\}=1\; .  
\end{equation}
\end{lemma}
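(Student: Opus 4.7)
The plan is to verify Jakubowski's tightness criterion for measures on $D([0,T],\mc M)$. Since $\mc M$ with the weak topology is a compact Polish space, the compact-containment condition is automatic, and it suffices to produce a countable family of continuous linear functionals on $\mc M$, closed under finite addition and separating its points, such that the induced real-valued processes are tight in $D([0,T],\R)$. I would take this family to consist of the maps $\rho \mapsto \langle \rho, G \rangle$ with $G$ in a countable dense subset of $C^2_0([-1,1])$; these separate $\mc M$ since $C^\infty_c((-1,1)) \subset C^2_0$ is dense in $L^1$ and the elements of $\mc M$ are bounded $L^\infty$ densities.

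Fix $G \in C^2_0([-1,1])$ and use Dynkin's decomposition
\[
\langle \pi_t^N, G \rangle = \langle \pi_0^N, G \rangle + \int_0^t N^2 \cl_N \langle \pi_s^N, G \rangle \, ds + M_t^{N,G}.
\]
The boundary generators $\LL_{\pm,N}$ annihilate $\langle \pi^N, G\rangle$ because $\pi^N$ depends only on $\{\eta(x):-N<x<N\}$. For the bulk part, a discrete summation by parts combined with the rate expansion of Lemma \ref{b1} yields
\[
N^2 \LL_{\b,N}\langle \pi^N, G\rangle = \langle \pi^N,\Delta^N G\rangle - \tfrac{\b}{2}\langle \sigma(\pi^N),\nabla^N G \cdot \nabla^N(\jn\star \pi^N)\rangle + B_N(G,\eta) + O(1/N),
\]
where $B_N(G,\eta)$ is the residual coming from the two edges $\{-N,-N+1\}$ and $\{N-1,N\}$. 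Since $G(\pm 1)=0$, one has $G(\pm(N-1)/N)=O(\|G'\|_\infty/N)$, so $B_N=O(\|G'\|_\infty)$ uniformly in $\eta$, and all the remaining terms are bounded by a constant $C_G$ depending only on $\|G\|_{C^2}$, $\b$, $\|\jn\|_\infty$ and $\|\partial_1\jn\|_\infty$, using $0 \le \pi^N \le 1$ and $0 \le \sigma(\pi^N) \le 1/2$. Hence $|N^2 \cl_N\langle \pi_s^N, G\rangle| \le C_G$ deterministically.

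For the predictable quadratic variation, each bulk-edge Kawasaki transition changes $\langle \pi^N, G\rangle$ by at most $\|G'\|_\infty/N^2$; each boundary-edge Kawasaki transition changes it by at most $|G(\pm(N-1)/N)|/N \le \|G'\|_\infty/N^2$, again thanks to $G(\pm 1)=0$; and transitions from $\LL_{\pm,N}$ leave $\langle \pi^N,G\rangle$ unchanged. With $O(N)$ edges and bounded rates, the diffusive speed-up produces $\langle M^{N,G}\rangle_T \le C'_G T/N$. Combining both bounds, for every stopping time $\tau \le T$ and every $\gamma>0$,
\[
\Es_{\mu_N}^{\b,N}\bigl|\langle \pi_{\tau+\gamma}^N,G\rangle-\langle \pi_\tau^N,G\rangle\bigr| \le \sqrt{C'_G\gamma/N}+C_G\gamma,
\]
which is Aldous's modulus-of-continuity bound. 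This gives tightness of $\langle \pi_\cdot^N, G\rangle$ in $D([0,T],\R)$, and therefore of $(Q_{\mu_N}^{\b,N})$ in $D([0,T],\mc M)$ by Jakubowski.

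The absolute continuity assertion \eqref{abscont} is essentially free: for every $N$ and every $\eta\in\cs_N$, $\pi^N(\eta)(u)\in[0,1]$ by construction, and this pointwise constraint is preserved under weak limits of uniformly bounded densities. The only mildly delicate step is controlling the boundary-edge residual $B_N(G,\eta)$; restricting to test functions with $G(\pm 1)=0$ neutralizes it without costing any separating power, and no input from the (non-explicit) invariant measure or from the super-exponential estimates of the preceding subsection is required at this stage.
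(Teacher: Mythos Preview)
Your proof is correct and is precisely the standard argument the paper has in mind: the paper does not actually prove this lemma, stating only that the three-step scheme (tightness, identification, energy estimate) follows \cite{gpv} and referring to \cite{kov} for the ``by now standard'' details. Your Aldous--Jakubowski verification---exploiting that $\pi^N$ omits the boundary sites $\pm N$ so that $\LL_{\pm,N}\langle\pi^N,G\rangle=0$, and that $G(\pm1)=0$ tames the two boundary Kawasaki edges---is exactly that standard route.

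One cosmetic remark: in your displayed expression for $N^2\LL_{\b,N}\langle\pi^N,G\rangle$ the factor $\sigma(\pi^N)$ should strictly be the microscopic quantity $(\eta(x{+}1)-\eta(x))^2=\tau_x\Psi_1(\eta)$; the passage to $\sigma$ of a local average only comes later via the super-exponential replacement (Proposition~\ref{see1}). Since both quantities are bounded by $1$, your deterministic drift bound $|N^2\cl_N\langle\pi^N,G\rangle|\le C_G$ is unaffected and the tightness argument goes through unchanged.
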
 

We then  show that  $Q^{\beta,*}$ is supported on densities $\r$ that satisfy \eqref{eq:1} in the weak sense. 

We start defining  for 
  $G \in {\cc_0}^{1,2}([0,T]\times \L)$  and   $\e>0$   
\begin{equation}
\label{id-lim}
\begin{aligned} 
&  \cb^{G,N}_{\varepsilon}=
\int_\L G_T(u) \pi^N(\eta_T)(u)du -\int_\L G_0(u) \pi^N(\eta_0)(u)du\\
&\quad
- \int_0^T \int_\L \partial_s G_s(u) \pi^N(\eta_s)(u)duds  
- \int_0^T \int_\L \Delta G_s(u) \pi^N(\eta_s) (u)duds \\
&\quad
-\frac \b  2\frac1{N}\sum_{x\in\L_N}  \int_0^T   
\big(\nabla G_s\big)(x/N) \Big\{ \sigma\big( \eta_s^{[\varepsilon N]}(x)\big)
\nabla^N\big( \jn\star \pi^N(\eta_s) \big) (x/N)\Big\}ds\\
&\quad
+\int_0^T dt
\left [\rho_+(\nabla G_t)(+1) - \rho_-(\nabla G_t)(-1)  \right] \; ,
\end{aligned}
\end{equation}
where $\eta_s^{[\varepsilon N]}(x)$ is the  local mean defined in \eqref {average} and     
$\nabla^N G_s(x/N)$ stands for  the discrete gradient  of  $G_s(x/N)$ defined in  \eqref{derive1}. 
 
\begin{proposition}
\label{lem2}
{\bf (Identification of the limit
equation).}
For  
any  function $G$ in ${\cc_0}^{1,2}([0,T]\times \L)$  and any $\d>0$ 
we have 
\begin{equation}
\label{ident-lim}
\limsup_{\varepsilon \to 0} \limsup_{\N \to \infty} 
\Pb_{\mu_N}^{\b,N} \left (
\left | \cb^{G,N}_{\varepsilon} \right| \ge\d \right )=0 \;.
\end{equation}
\end{proposition}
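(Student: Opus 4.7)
The plan is to apply Dynkin's martingale decomposition to $\langle \pi^N_t, G_t\rangle$, compute the action of $N^2\cl_N$ via Lemma \ref{b1}, and reduce the resulting expression to $\cb^{G,N}_\varepsilon$ using the super-exponential replacement estimates of Section \ref{basic}.

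First, I would consider the Dynkin martingale
$$
M_t^{G,N} \;:=\; \langle \pi^N_t, G_t\rangle - \langle \pi^N_0, G_0\rangle - \int_0^t \!\langle \pi^N_s, \partial_s G_s\rangle\,ds - \int_0^t\! N^2 \cl_N \langle \pi^N_s, G_s\rangle\, ds.
$$
A direct computation of its predictable quadratic variation gives $\Es^{\b,N}_{\mu_N}[\langle M^{G,N}\rangle_T]=O(T\|G\|_{C^1}^2/N)$ (each bulk exchange contributes $O(N^{-4})$ to the jump of $\langle\pi^N,G\rangle^2$, with total rate $O(N^3)$ after diffusive scaling), so $M_T^{G,N}\to 0$ in $L^2(\Pb^{\b,N}_{\mu_N})$ by Doob.

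Second, I would decompose $N^2\cl_N\langle\pi^N,G\rangle$. The boundary generators contribute $NG_s(\pm 1)(\rho_\pm-\eta(\pm N))=0$ by the Dirichlet condition $G_s(\pm 1)=0$. For the bulk, Lemma \ref{b1} yields $C_N^\b(x,x+1;\eta)=1+r_N(x;\eta)+O(N^{-2})$ with $r_N(x;\eta):=-\tfrac{\b}{2}(\eta(x+1)-\eta(x))N^{-1}\nabla^N[\jn\star\pi^N](x/N)$. Two successive summations by parts on the leading $1$ produce $\langle\pi^N,\Delta^N G_s\rangle$ plus boundary terms localized at $x=\pm N$; Taylor expansion of $G$ around $\pm 1$ together with $G_s(\pm 1)=0$ turns these into $\nabla G_s(-1)\eta(-N)-\nabla G_s(1)\eta(N)+o_N(1)$ uniformly in $\eta$. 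A single summation by parts on $r_N$ produces the nonlinear contribution
$$
\frac{\b}{2N}\sum_{x} \nabla^N G_s(x/N)\,\tau_x\Psi_1(\eta_s)\,\nabla^N[\jn\star\pi^N](x/N),\qquad \Psi_1(\eta):=(\eta(1)-\eta(0))^2,
$$
while the $O(N^{-2})$ remainder from Lemma \ref{b1} becomes $O(N^{-1})$ after integration.

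Third, I would perform the super-exponential replacements. Since $\widetilde\Psi_1(\rho)=E^{\nu^\rho}[(\eta(1)-\eta(0))^2]=2\rho(1-\rho)=\sigma(\rho)$, Proposition \ref{see1} applied with $\Psi=\Psi_1$ allows replacing $\tau_x\Psi_1(\eta_s)$ by $\sigma(\eta_s^{[\varepsilon N]}(x))$ at the cost of a super-exponentially small error, producing precisely the nonlinear term of $\cb^{G,N}_\varepsilon$. Proposition \ref{seeb} analogously replaces $\eta_s(\pm N)$ by $\rho_\pm$ in the boundary piece, so that the resulting $\rho_-\nabla G_s(-1)-\rho_+\nabla G_s(1)$ cancels the explicit $\rho_\pm\nabla G_s(\pm 1)$ summands appearing in $\cb^{G,N}_\varepsilon$. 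The discrete-vs-continuous Laplacian mismatch $\langle\pi^N_s,\Delta^N G_s-\Delta G_s\rangle$ is $O(N^{-2}\|G\|_{C^4})$ and disappears. Collecting these facts yields $\cb^{G,N}_\varepsilon = M_T^{G,N} + o_{\Pb}(1)$ as $N\to\infty$ then $\varepsilon\to 0$, which implies \eqref{ident-lim}.

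The main technical delicacy lies in the one-block replacement inside the nonlinear term, since the weight $\nabla^N[\jn\star\pi^N](x/N)$ depends on the configuration, whereas Proposition \ref{see1} is stated for a deterministic test function $G_s(x/N)$. This is resolved using the uniform bound $|\nabla(\jn\star\pi^N)|\le\|\partial_u\jn\|_\infty$ furnished by Lemma \ref{lem-jn} together with the fact that $\nabla^N[\jn\star\pi^N]$ varies only on the macroscopic scale: one approximates $\jn\star\pi^N$ in $L^\infty$ by a countable dense family of deterministic smooth profiles, applies Proposition \ref{see1} to each, and concludes by a diagonal argument together with the tightness of $\{\pi^N\}$.
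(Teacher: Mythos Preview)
Your proof is correct and follows essentially the same route as the paper: Dynkin martingale, quadratic variation vanishes, summation by parts via $G_s(\pm 1)=0$, expansion of the rates through Lemma~\ref{b1}, and the two replacement lemmas. The paper's argument is in fact terser than yours and does not explicitly flag the issue you raise in your last paragraph.

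That said, your final paragraph deserves a comment. You are right that the weight $\nabla^N[\jn\star\pi^N](x/N)$ is configuration-dependent and that Proposition~\ref{see1} is stated for deterministic $G$, so some care is needed. Your proposed fix (approximate by a dense family of deterministic profiles, diagonal argument, tightness) works, but is heavier than necessary. The standard and simpler route is to note that $u\mapsto \nabla^N[\jn\star\pi^N(\eta)](u)$ is, uniformly in $\eta\in\cs_N$, bounded by $\|\partial_u\jn\|_\infty$ and Lipschitz with constant $\|\partial^2_u\jn\|_\infty$; hence on each microscopic block of size $\ell$ or $\varepsilon N$ this weight is constant up to $O(\varepsilon)$, and the one-block/two-block machinery applies verbatim after freezing it on each block. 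Equivalently, one discretizes the range $[-\|\partial_u\jn\|_\infty,\|\partial_u\jn\|_\infty]$ into finitely many levels and applies Proposition~\ref{see1} once per level. This is the implicit argument behind the paper's bare invocation of Proposition~\ref{see1}; your compactness/diagonal scheme reaches the same conclusion but obscures the locality that makes the replacement work.

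One minor inaccuracy: with the paper's definition of $\pi^N$ the sum runs over $-N+1\le x\le N-1$, so $\LL_{\pm,N}\langle\pi^N,G\rangle$ vanishes identically rather than through $G_s(\pm 1)=0$; this does not affect the argument. Also, Proposition~\ref{seeb} is stated for the block average $\eta^\ell(\pm N)$, so strictly speaking you first replace $\eta_s(\pm N)$ by $\eta_s^\ell(\pm N)$ via the Dirichlet form (a routine step) and then invoke Proposition~\ref{seeb}; the paper is equally elliptic on this point.
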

 The last statement is an energy estimate. Every limit point $Q^{\beta,*}$ of the sequence 
$(Q_{\mu_N}^{\b,N})$ is concentrated on paths whose densities $\r \in L^2\big(0,T; H^1(\L)) $. 
 
\begin{lemma}
\label{lem3} {\bf (Energy estimate)} Let
$Q^{\beta,*}$  be a limit point of the sequence $(Q_{\mu_N}^{\b,N})$. Then, 
\begin{equation}
\label{enest} 
{Q^{\beta,*}} \Big[ L^2\big(0,T; H^1(\L) \big)\Big]=1\; .
\end{equation}
\end{lemma}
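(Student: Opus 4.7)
The plan is to use the identity $\mc Q(\pi) = 2\delta_0\,\tq^{\delta_0}(\pi)$ recorded after \eqref{ccc}, together with the fact, recalled after \eqref{tm4}, that finiteness of $\mc Q(\pi)$ is equivalent to $\pi \in L^2([0,T];H^1(\Lambda))$. Hence it suffices to show that $\tq^{\delta_0}(\pi) < \infty$ holds $Q^{\beta,*}$-a.s.\ for some (any) $\delta_0>0$. I would fix such a $\delta_0$ and a countable family $\{G_j\}_{j\ge 1}\subset C_c^\infty([0,T]\times\Lambda)$ dense in the $C^1$ norm, so that
\[
\tq^{\delta_0}(\pi) \;=\; \sup_{j\ge 1} \tq^{\delta_0}_{G_j}(\pi)
\qquad\text{for every } \pi \in D([0,T];\mc M).
\]

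First I would apply Corollary \ref{cor03} with the exponent $\delta := \delta_0 \wedge 1$ and Jensen's inequality to deduce, for each fixed $k\ge 1$,
\[
\limsup_{\epsilon\to 0}\;\limsup_{N\to\infty}\, \bb E^{\beta,N}_{\eta^N}\!\left[ \sup_{j\le k} \tq^{\delta_0}_{G_j}(\pi^N * \iota_\epsilon) \right] \;\le\; \frac{C_1(T+1)}{\delta} \;=:\; K.
\]
For fixed $\epsilon>0$ and $k$, the functional $\pi \mapsto \sup_{j\le k}\tq^{\delta_0}_{G_j}(\pi*\iota_\epsilon)$ is bounded and continuous on $D([0,T];\mc M)$, because convolution with $\iota_\epsilon$ is continuous from $\mc M$ (weak topology) into $C(\bar\Lambda)$, and both pieces of $\tq^{\delta_0}_{G_j}$ depend continuously on the smoothed density. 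Hence, passing to the limit along the subsequence that realizes $Q^{\beta,N}_{\mu_N}\to Q^{\beta,*}$,
\[
\bb E_{Q^{\beta,*}}\!\left[ \sup_{j\le k} \tq^{\delta_0}_{G_j}(\pi*\iota_\epsilon) \right] \;\le\; K.
\]

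Next I would remove the mollification. By the tightness step \eqref{abscont}, $Q^{\beta,*}$ is supported on densities with $0\le\pi\le 1$, so $\pi_t*\iota_\epsilon\to\pi_t$ in $L^1(du)$ for a.e.\ $t$. Rewriting $\int(\pi*\iota_\epsilon)\,\nabla G_j = \int\pi\,(\iota_\epsilon*\nabla G_j)$ and using the uniform convergence $\iota_\epsilon*\nabla G_j\to\nabla G_j$, together with dominated convergence on the bounded $\sigma$ piece, one gets $\tq^{\delta_0}_{G_j}(\pi*\iota_\epsilon)\to \tq^{\delta_0}_{G_j}(\pi)$ as $\epsilon\downarrow 0$. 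Since the sup over $j\le k$ is a finite maximum of continuous quantities uniformly bounded in $\epsilon$, bounded convergence yields $\bb E_{Q^{\beta,*}}[\sup_{j\le k}\tq^{\delta_0}_{G_j}(\pi)]\le K$. Monotone convergence in $k$, combined with the countable-dense representation of $\tq^{\delta_0}$, then gives $\bb E_{Q^{\beta,*}}[\tq^{\delta_0}(\pi)]\le K <\infty$, so $\tq^{\delta_0}(\pi)<\infty$ and hence $\pi\in L^2([0,T];H^1(\Lambda))$ $Q^{\beta,*}$-a.s., which is \eqref{enest}.

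The hard part will be the order of the limits. The unmollified functional $\tq^{\delta_0}_{G_j}$ is \emph{not} continuous on $D([0,T];\mc M)$, since $\mc M$ carries only the weak topology, so one cannot bound $\bb E_{Q^{\beta,*}}[\tq^{\delta_0}_{G_j}(\pi)]$ directly by a $\liminf$ argument in $N$. The mollifier $*\iota_\epsilon$ restores continuity at the cost of an auxiliary limit, and the limit $\epsilon\downarrow 0$ can only be taken \emph{after} the weak limit in $N$, its validity resting on the $L^\infty$ bound on $\pi$ supplied by \eqref{abscont}; the nonlinear $\sigma$ contribution is controlled uniformly because $\sigma\le 1/2$, while the linear one is controlled by the smoothness and compact support of $\nabla G_j$.
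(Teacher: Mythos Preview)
Your argument is correct and is precisely the standard derivation the paper has in mind when it writes, just before Corollary~\ref{cor03}, that ``the following corollary allows to show Lemma~\ref{lem3}.'' The paper does not spell out the proof of Lemma~\ref{lem3}; your chain---Jensen on the exponential moment of Corollary~\ref{cor03}, weak convergence at fixed $\epsilon$ via continuity of the mollified functional, removal of the mollifier using \eqref{abscont}, and monotone convergence in $k$ against a countable $C^1$-dense family---is exactly the route taken in the references \cite{blm,qrv,kl} to which the paper defers.

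Two small points worth making explicit. First, Corollary~\ref{cor03} is stated for a deterministic initial configuration $\eta^N$, whereas Lemma~\ref{lem3} concerns $Q^{\beta,N}_{\mu_N}$; since the constant $C_1$ depends only on $\rho_\pm$, one integrates the bound of Lemma~\ref{hs02} over $\mu_N$ before applying Jensen. Second, for the monotone-convergence step you should note that $\sup_{j\le k}\tq^{\delta_0}_{G_j}(\pi)\ge \tq^{\delta_0}_{G_1}(\pi)$, which is bounded (hence integrable) because $0\le\pi\le1$; this legitimises the passage $k\uparrow\infty$ even though the supremum need not be nonnegative.
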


\subsection{Proof of Proposition \ref{lem2}}
Let  $Q^{\beta,*}$  be  a limit point of the sequence 
$(Q_{\mu^N}^{\b,N})$ and assume, without loss of
generality, that  $Q_{\mu^N}^{\b,N}$ converges to $Q^{\beta,*}$.
Fix a function $G$ in ${\cc}_0^{1,2}([0,T]\times\L)$. Consider the   $\Pb_{\mu^N}^{\b,N}$ 
martingales  with respect to the natural filtration associated with $(\eta_t)_{t\in[0,T]}$,
$M_t^G\equiv M_t^{G,N,\b}$ and  $\cn_t^G \equiv \cn_t^{G,N,\b}$, 
$t\in[0,T]$, defined by 
\begin{equation}
\label{lMMa1}
\begin{aligned}
&M_t^G =<\pi_t^N,G_t>-<\pi_0^N,G_0> -\int_0^t \big (  <\pi_s^N,
\partial_s G_s>+N^2
\cl_N^{\b} <\pi_s^N, G_s> \big)   d s \, , \\
& \cn_t^G \; =\; \left(M_t^G \right)^2 \\
&\qquad
 -\; \int_0^t \left\{ N^2 \cl_N^{\b}
\big(<\pi_s^N,G_s>\big)^2 - 2 <\pi_s^N,G_s> N^2 \cl_N^{\b}
<\pi_s^N,G_s>\right\} d s \; .
\end{aligned}
\end{equation} 
A computation of the integral term of $\cn_t^G$ shows
that the expectation of the quadratic variation of 
$M_t^G$ vanishes as $N\uparrow 0$. Therefore, by Doob's
inequality, for every $\d >0$, 
\begin{equation}
\label{bv}
\lim_{N\rar \infty} \Pb_{\mu_N}^{\b,N} \Big[ \sup_{0\le t\le T}
|M_t^G|>\d \Big] \; =\;0 \; . 
\end{equation}
 Since for any $s\in[0,T]$ the  function
$G_s$ vanishes at the boundary of $\L$, a summation by
parts permits to rewrite the integral term of $M_t^G$ as
\begin{eqnarray*}
&&\int_0^t <\pi_s^N,\partial_s G_s> d s \\
&&
\; -\; \int_0^t  \, 
N\Big\{ \sum_{x=-N+1}^{N-1} \big(\nabla^N
G_s \big) (x/N)  C_N^\b (x,x+1,\eta_s)\big( \nabla_{x,x+1}\eta_s (x)\big)\Bigr\}ds \; ,
\end{eqnarray*}
where $\nabla^N$ is defined in \eqref{derive1}.

{}From Lemma \ref{b1}, a summation by parts and Taylor expansion permit to rewrite the last expression as
\begin{equation*}
\begin{aligned}
&O(N)\; +\;
\int_0^t <\pi_s^N,\partial_s G_s> d s
+\int_0^t <\pi_s^N,\Delta G_s> d s \\
&
+\int_0^t \Big\{-\nabla G_s(1) \eta_s(N)\, + \,\nabla G_s(-1) \eta_s(-N) \Big\}ds\\
&
+\frac{\b}{2N}\int_0^t\Big\{ \sum_{x\in \L_N} \big(\nabla
G_s \big) (x/N) \big(\nabla_{x,x+1}\eta_s (x) \big)^2\;  \nabla^N( \jn\star \pi^N(\eta_s))(x/N)
\Big\}ds \, .
\end{aligned}
\end{equation*}
Next, we use  the replacement lemma stated in 
Proposition \ref{see1} and Proposition \ref{seeb}. We   obtain that the integral term of the martingal $M_t^G $ can
be replaced by
\begin{equation*}
\begin{aligned}
&
\int_0^t <\pi_s^N,\partial_s G_s> d s
+\int_0^t <\pi_s^N,\Delta G_s> d s \\
&\quad
+\int_0^t \Big\{-\nabla G_s(1) \rho_+\, + \,\nabla G_s(-1) \rho_- \Big\}ds\\
&\quad
+\frac{\beta}2 \int_0^t\frac{1}{N}\Big\{ \sum_{x\in \L_N} \big(\nabla
G_s \big) (x/N)\sigma \left(\eta^{\varepsilon N}(x)   \right)  
 \nabla^N( \jn \star \pi^N(\eta_s))(x/N)
\Big\}ds \; .
\end{aligned}
\end{equation*}
This concludes the proof of the lemma.
\qed

\subsection{Steps to prove Theorem \ref{th-hy1}}

 Let   $\mu^{stat}_N=     \mu^{stat}_N(\b,\rho_-, \rho_+)$ be    the unique  stationary  measure of the irreducible Markov process $(\eta_t)_{t\ge 0}$ with generator 
$\mc L_N $.    
 From Tchebyshev's unequality, we need to show that 
\begin{equation}\label{ss1a}
\lim_{N\to\infty} E^{\mu^{stat}_{N}}\Big [ \Big|\big< \pi^N,G \big>- \big< \bro,G\big>
\Big|\Big]=0\, ,
\end{equation}
where $E^{\mu^{stat}_{N}}$ stands for the expectation with respect to the stationary measure $\mu^{stat}_N$.
It is enough to prove that any subsequence of the sequence of real numbers in the limit \eqref{ss1a} vanishes. Without loss of generality we
consider a sequence in  \eqref{ss1a} as a subsequence along which the limit exists. 

\medskip 
 Denote by   $Q^{\beta,N,stat}:= Q^{\beta,N}_{\mu^{stat}_N}$   the probability measure on the Skorohod space
$D\big([0,T]; \mc M\big) $ induced by the Markov process $(\pi_t^N)\equiv (\pi_N(\eta_t))$, when the initial measure is $\mu^{stat}_N$.

By  the first part of Theorem \ref{th-hy} we have  that all limit points of the sequence
$Q^{\beta,N,stat}$ are concentrated on  $\mc A_{[0, T]}$ for any $T>0$, i.e  all its  limit points  are concentrated on the weak solutions of the hydrodynamic equation for  some unknown initial profile.

 Let   $(Q^{\b,N_k,stat} )$ be a
sub-sequence converging to  a limit point which we denote by $Q^{\beta,*,stat}$.  Note that  different subsequences might have different limit points.  Let $\beta$  small enough and denote by $\bar \rho$ the unique   stationary solution   of \eqref {eq:1}, see Theorem \ref {stat2}. 
   By stationarity  we have for any $\d>0$, 
\begin{equation*} 
E^{\mu^{stat}_{N_k}}\Big [ \Big|\big< \pi^N,G \big>- \big< \bro,G\big>
\Big|\Big]
  \; =\;   \E^{\beta,N_k}_{\mu_{N_k}^{stat}}  
\Big [  \Big|\big< \pi_T^N,G \big>- \big< \bro,G\big>
\Big|\Big] .
\end{equation*} 
Since the integrand   is bounded   
  we have the following: 
\begin{equation}
\begin {split}  
 \lim_{k\to \infty} &
  \E^{\beta,N_k}_{\mu_{N_k}^{stat}}  
 \Big\{     \Big|\big< \pi_T^N,G \big>- \big< \bro,G\big>
\Big|\Big\}\cr
\  & \; =\; 
 E^{  Q^{\beta,*,stat}}
\Big\{   \left ( \Big|\big< \rho_T ,G\big>- \big< \bro,G\big>
\Big| \1_{\{\mc A_{[0, T]}\}} \big(\rho\big)  \right ) \Big\}\cr
 &\ \ \; \le \; \| G\|_2
\ E^{  Q^{\beta,*,stat}}
 \Big\{   \big\| \rho_T  -\bro \big\|_2
\1_{ \{\mc A_{[0, T]}\} }    \big(\rho\big)      \Big\} \cr & \le 
 \| G\|_2  e^{-c(\beta)T}
\end {split}
\end {equation}
by Theorem \ref {stat2}  and    $ \|v\|_2$ denotes the $L^2(\Lambda)$ norm of $v$.
 Then letting $ T \to \infty$ we show the thesis.  
 \qed
  \section{Large deviations}
\label{large-dev}
In this section we prove some properties of the rate function and we present the main steps to derive the
large deviations results.

Let $L^2(\L)$ be the Hilbert space of functions $G:\L
\to \R$ such that $\int_\L | G(u) |^2 du <\infty$ equipped with
the inner product
\begin{equation*}
\<G,J\>_2 =\int_\L G(u) \,  J (u) \, du\; .
\end{equation*}
The norm of $L^2(\L)$ is denoted by $\|
\cdot \|_2$.

Let $H^1(\L)$ be the Sobolev space of functions $G$ with
generalized derivatives $\nabla G$	
in $L^2(\L)$. $H^1(\L)$ endowed with the scalar product
$\<\cdot, \cdot\>_{H^1}$, defined by
\begin{equation*}
\<G,J\>_{H^1} = \< G, J \>_2 + 
\<\nabla G \, , \, \nabla J \>_2\;,
\end{equation*}
is a Hilbert space. The corresponding norm is denoted by
$\|\cdot\|_{H^1}$.
  Denote by  $H^{1}_0(\Lambda)$ the closure of $C^\infty_c(\L)$ (the set of infinitely differentiable functions   from $\L$ to $\R$   with compact support in $\L$) in $H^1(\L)$.  
Denote by $H^{-1}(\Lambda)$ the Hilbert space, dual of   $H^{1}_0(\Lambda)$, equipped 
with the norm $\Vert\cdot\Vert_{-1}$
 \begin{equation*}
\Vert v\Vert^2_{-1} = \sup_{G\in\mc C^{\infty}_c(\Lambda)}
\left\{2\langle v,G\rangle_{-1,1} -
\int_{\Lambda} | \nabla G(u)|^2du \right\}\, , 
\end{equation*}
where $\langle v,G\rangle_{-1,1}$ stands for the  duality between $H^1_0$ and $H^{-1}$.
Fix $T>0$.
For a Banach space $(\bb
B,\Vert\cdot\Vert_{\bb B})$ we denote by $L^2([0,T],\bb B)$
the Banach space of measurable functions $U:[0,T]\to\bb B$ for which
\begin{equation*}
\Vert U\Vert^2_{L^2([0,T],\bb B)} \;=\; 
\int_0^T\Vert U_t\Vert_{\bb B}^2\, dt \;<\; \infty
\end{equation*}
holds. 

\subsection{Properties of the rate function}

Denote 
 $$\cb_\gamma^{\rho_\pm} = \{ \pi \in D([0,T], \mc M): \pi_0(\cdot) = \gamma (\cdot); \quad \pi_t(\pm 1) = \rho_\pm,   t\in (0,T] \}. $$

 \begin{lemma}
\label{lem01}
Let $\pi$ be a trajectory in $D([0,T],\mc M)$ such that
$\hat I_T^\b(\pi|\gamma)<\infty$. Then $\pi$ belongs to $\cb_\gamma^{\rho_\pm}$.
\end{lemma}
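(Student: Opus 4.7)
The plan is to test the bound $\cj_G^\b(\pi) \le \hat I_T^\b(\pi|\g) < \infty$ against functions $G$ localised either in time near $t = 0$ or in space near $u = \pm 1$. If either $\pi_0$ differs from $\g$ or the boundary value of $\pi_t$ at an endpoint differs from $\rho_\pm$, the constructions below produce test sequences along which $\cj_G^\b(\pi) \to +\infty$, contradicting finiteness of $\hat I_T^\b$.

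For the initial condition, fix $H \in C^\infty_c(\L)$, $\lambda \in \R$, and $\phi \in C^\infty([0,\infty))$ with $\phi(0) = 1$ and $\mathrm{supp}\,\phi \subset [0,1)$; set $\phi_\epsilon(t) := \phi(t/\epsilon)$ and $G_t(u) := \lambda\,\phi_\epsilon(t) H(u)$. Since $H$ has compact support in $\L$, both boundary terms and the $\jn$-gradient contribution to $\ell_G^\b$ vanish, while the Laplacian and final-time terms are $O(\lambda\epsilon)$. The change of variable $s = t/\epsilon$ gives $\int_0^T \phi_\epsilon'(t)\langle \pi_t, H\rangle\,dt = \int_0^1 \phi'(s)\langle \pi_{\epsilon s}, H\rangle\,ds$, which tends to $-\langle \pi_0, H\rangle$ as $\epsilon \to 0$ by right-continuity of $\pi \in D([0,T],\mc M)$ at $t = 0$. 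The quadratic penalty $\tfrac12 \int \sigma(\pi_t)(\nabla G)^2$ is $O(\lambda^2\epsilon)$. Passing to the limit $\epsilon \to 0$ yields $\cj_G^\b(\pi) \to \lambda\langle \pi_0 - \g, H\rangle$, which must stay bounded above by $\hat I_T^\b(\pi|\g)$ for every $\lambda \in \R$; sending $\lambda \to \pm\infty$ forces $\langle \pi_0 - \g, H\rangle = 0$, and varying $H$ over $C^\infty_c(\L)$ gives $\pi_0 = \g$ in $\mc M$.

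For the boundary value at $u = 1$ (the case $u = -1$ is symmetric), fix $\varphi \in C^\infty([0,\infty))$ with $\mathrm{supp}\,\varphi \subset [0,1]$, $\varphi(0) = 0$ and $\varphi'(0) = 1$, and set $\psi_\delta(u) := \varphi((1-u)/\delta)$. Then $\psi_\delta \in C^2_0([-1,1])$ concentrates in $[1-\delta,1]$, with $\nabla\psi_\delta(1) = -\delta^{-1}$. For $\phi \in C^\infty_c((0,T))$ and $\lambda \in \R$, test with $G_t(u) := \lambda\,\phi(t)\psi_\delta(u)$. Changing variables $s = (1-u)/\delta$ and using $\int_0^1 \varphi''(s)\,ds = -1$, the $\delta^{-1}$-contributions collapse to
\begin{equation*}
  \delta\,\cj_G^\b(\pi) \;=\; \lambda \int_0^T \phi(t)\,[M_\delta(t) - \rho_+]\,dt \;-\; \tfrac{\lambda^2}{2}\int_0^T \phi^2(t)\,S_\delta(t)\,dt \;+\; O(\delta),
\end{equation*}
where $M_\delta(t) := -\int_0^1 \pi_t(1 - \delta s)\varphi''(s)\,ds$ and $S_\delta(t) := \int_0^1 \sigma(\pi_t(1 - \delta s))(\varphi'(s))^2\,ds$, and all remaining contributions to $\ell_G^\b$ are $O(1)$ uniformly in $\delta$. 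Since $(M_\delta)$ and $(S_\delta)$ are bounded in $L^\infty(0,T)$, I extract a subsequence along which they converge weakly-$\ast$ in $L^\infty(0,T)$ to limits $M$ and $S$. The bound $\delta\,\cj_G^\b(\pi) \le \delta\,\hat I_T^\b \to 0$ then gives, in the limit $\delta \to 0$,
\begin{equation*}
  \lambda \int_0^T \phi(t)\,[M(t) - \rho_+]\,dt \;-\; \tfrac{\lambda^2}{2}\int_0^T \phi^2(t)\,S(t)\,dt \;\le\; 0 \qquad \forall\,\lambda \in \R.
\end{equation*}
Optimising the left-hand side over $\lambda$ forces $\int_0^T \phi(t)\,[M(t) - \rho_+]\,dt = 0$ for every $\phi \in C^\infty_c((0,T))$, hence $M(t) = \rho_+$ almost everywhere on $(0,T]$.

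The main obstacle is this last identification of a pointwise boundary value: since $\pi_t$ lies a priori only in $L^\infty(\L)$, it carries no trace at $u = \pm 1$ in the classical sense. The statement $\pi_t(\pm 1) = \rho_\pm$ must be read as equality, in the limit $\delta \to 0$, of the weighted averaged trace $M(t)$ (and its mirror at $u = -1$) with $\rho_\pm$. Independence of $M(t)$ from the admissible choice of $\varphi$, checked by varying $\varphi$ in the above class and reapplying the same optimisation argument, provides an intrinsic notion of boundary trace for trajectories of finite $\hat I_T^\b$, and completes the proof.
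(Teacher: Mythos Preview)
Your argument is correct and supplies exactly the details the paper omits by referring to Lemma~3.5 of \cite{bdgjl3}: one localises test functions in $C^{1,2}_0$ first near $t=0$ and then near the spatial endpoints, scales so that a single linear term survives in $\cj_G^\b$, and uses unboundedness in the free parameter $\lambda$ to force the desired identities. One small slip: in the initial-condition step the $\jn$-gradient contribution to $\ell_G^\b$ does not vanish identically (compact support of $H$ in $\Lambda$ plays no role there), but it is $O(\lambda\epsilon)$ since the time integration is effectively over $[0,\epsilon]$, so the conclusion is unaffected. Your closing remark on the meaning of the boundary trace is also to the point: under the sole hypothesis $\hat I_T^\b(\pi|\gamma)<\infty$ no $H^1$-regularity of $\pi_t$ is yet available, and the statement ``$\pi_t(\pm 1)=\rho_\pm$'' has to be read as the weak-$\ast$ convergence (in $t$) of the boundary averages $M_\delta$ that you establish.
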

The proof   is similar to the one of Lemma 3.5 in \cite{bdgjl3}.
To prove the lower-semicontinuity of the rate function, we need the next results
 \begin{lemma} 
\label{g05} For any $\b\ge 0$,
there exists a constant $C_0=C_0(\b)$ such that
\begin{equation*}
\int_0^T\| \partial_t \pi_t\|_{-1}^2  
\;\le\; C_0 \Big\{ I_T^\b (\pi |\gamma ) +  \cq(\pi)\Big\}
\;,\quad \mc Q (\pi) \;\le\;  C_0\big\{ 1 + I_T^\b (\pi |\gamma )\big\}
\end{equation*}
for all $\pi$ in $D([0,T], \mc M)$.  
\end{lemma}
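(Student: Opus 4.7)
Plan. Throughout, assume $I_T^\b(\pi|\g) < \infty$, otherwise both bounds are trivial. By \eqref{3:Ib} this forces $\cq(\pi) < \infty$, whence $\pi \in L^2([0,T]; H^1(\L))$ with $\int_0^T \|\nabla \pi_t\|_2^2 dt \le \cq(\pi)$ (from $\s(\pi)\le 1/2$), and by Lemma \ref{lem01} the traces satisfy $\pi_t(\pm 1) = \rho_\pm$ for a.e.\ $t$. The cornerstone identity, obtained by integrating by parts in space in \eqref{lb1} (the trace values cancel the boundary terms $\rho_\pm \int \nabla G_t(\pm 1) dt$ against those arising from $-\int_\L \pi_t \Delta G_t du = -[\pi_t \nabla G_t]_{-1}^1 + \int_\L \nabla \pi_t \nabla G_t du$), reads: for any $G \in C^{1,2}_0([0,T]\times[-1,1])$ with $G_0 = G_T = 0$,
\begin{equation*}
\cj_G^\b(\pi) = -\int_0^T \langle \pi_t, \partial_t G_t\rangle dt + \int_0^T \langle \nabla \pi_t, \nabla G_t\rangle dt - \frac{\b}{2}\int_0^T \langle \s(\pi_t), \nabla G_t \cdot \nabla(\jn\star\pi_t)\rangle dt - \frac{1}{2}\int_0^T \langle \s(\pi_t), (\nabla G_t)^2\rangle dt.
\end{equation*}

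For the first inequality, start from the variational formula $\int_0^T\|\partial_t\pi_t\|_{-1}^2 dt = \sup_G \{2\int_0^T \langle\partial_t\pi_t, G_t\rangle dt - \int_0^T \|\nabla G_t\|_2^2 dt\}$, the supremum running over $G \in L^2([0,T]; H^1_0(\L))$, reduced by density to smooth $G$ with $G_0 = G_T = 0$. After integrating by parts in $t$ to move $\partial_t$ from $\pi$ to $G$, the cornerstone identity rearranges to $\int_0^T \langle\partial_t\pi_t, G_t\rangle dt = \cj_G^\b(\pi) - \int_0^T \langle\nabla\pi_t,\nabla G_t\rangle dt + \tfrac{\b}{2}\int_0^T \langle\s(\pi_t), \nabla G_t \cdot \nabla(\jn\star\pi_t)\rangle dt + \tfrac{1}{2}\int_0^T \langle\s(\pi_t), (\nabla G_t)^2\rangle dt$. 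Substituting, bounding $2\cj_G^\b(\pi) \le 2 I_T^\b(\pi|\g)$, and applying Young's inequality $2|ab|\le \e^{-1}a^2 + \e b^2$ to each remaining cross term—using the uniform bound $\|\nabla(\jn\star\pi_t)\|_\infty \le C_J$ (from the explicit form of $\jn$ and $\|\pi\|_\infty\le 1$), $\s(\pi) \le 1/2$, and $\int\|\nabla\pi_t\|_2^2 dt \le \cq(\pi)$—one chooses $\e$ small enough that the total coefficient of $\int_0^T\|\nabla G_t\|_2^2 dt$ on the right becomes strictly negative after subtracting the $-\int\|\nabla G\|_2^2$ term. The supremum over $G$ is then finite and yields $\int_0^T\|\partial_t\pi_t\|_{-1}^2 dt \le C_0\{I_T^\b(\pi|\g) + \cq(\pi)\}$.

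For the second inequality, given $H \in C^\infty_c((0,T)\times(-1,1))$, set $\bar H_t := \int_{-1}^1 H_t(v)dv$ and define $G_t(u) := -\int_{-1}^u H_t(v)\,dv + \tfrac{u+1}{2}\bar H_t$, so that $G \in C^{1,2}_0([0,T]\times[-1,1])$ with $G_0 = G_T = 0$, $\nabla G_t = -H_t + \bar H_t/2$, $\Delta G_t = -\nabla H_t$, and $\nabla G_t(\pm 1) = \bar H_t/2$. Expanding $\cj_G^\b(\pi)$ via the cornerstone identity one recovers the principal term $-\int \langle \nabla \pi_t, H_t\rangle dt - \tfrac{1}{2}\int \langle \s(\pi_t), H_t^2\rangle dt$, which equals $\cq_H(\pi)$ by spatial integration by parts (since $H$ is compactly supported), together with corrections produced by $\bar H$: the $-\int\langle\pi,\partial_t G\rangle$ piece, the $\b$ drift evaluated at $-H + \bar H/2$, and the quadratic cross terms from $(\nabla G)^2 = H^2 - H\bar H + \bar H^2/4$. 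Each correction is controlled via Young's inequality together with the estimate $|\bar H_t|^2 \le 2\int_{-1}^1 H_t^2 du$ (so $\int \s(\pi)\bar H^2 dt \le \|H\|_{L^2}^2$) and the uniform bound on $\nabla(\jn\star\pi)$; a scaling-in-$H$ optimisation of the form $H \mapsto tH$ (under which $\cj_{tG}^\b$ is a concave parabola in $t$) converts the inequality $\cj_G^\b(\pi) \le I_T^\b(\pi|\g)$ into $\cq_H(\pi) \le C_0(1 + I_T^\b(\pi|\g))$ uniformly in $H$, and taking the supremum over $H$ gives $\cq(\pi) \le C_0(1 + I_T^\b(\pi|\g))$.

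Main obstacle. The delicate point is the correction $-\int_0^T \langle \pi_t, \partial_t G_t\rangle dt$ in the second inequality, since $\partial_t G_t$ involves $\partial_t H_t$, which is not controlled by any natural norm of $H$ entering $\cq_H(\pi)$. The plan, in the spirit of \cite{blm}, is to first restrict to tensor test functions $H_t(u) = f(t)\phi(u)$, for which $\partial_t G_t = f'(t)\Phi(u)$ with $\Phi(u) := -\int_{-1}^u \phi(v)dv + \tfrac{u+1}{2}\int_{-1}^1 \phi(v)dv$ depending only on $\phi$; integration by parts in $t$ (using $f(0) = f(T) = 0$) rewrites $-\int_0^T\langle\pi_t,\partial_t G_t\rangle dt = \int_0^T f(t)\langle \partial_t\pi_t, \Phi\rangle dt$, which by duality is bounded by $\|f\|_{L^2}\|\Phi\|_{H^1_0}\|\partial_t\pi\|_{L^2([0,T];H^{-1})}$; invoking the first inequality (already established) and Young's inequality absorbs the resulting $\sqrt{\cq(\pi)}$-factor into the left-hand side, and a density argument over tensor products extends the estimate to arbitrary $H$ and closes the proof.
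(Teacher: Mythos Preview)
The paper does not prove this lemma; it refers to \cite{qrv}, \cite{mo1}, \cite{blm}. Your treatment of the \emph{first} inequality is correct and is the standard one: rewrite $\langle\partial_t\pi,G\rangle$ through $\cj_G^\b$, bound $\cj_G^\b\le I_T^\b$, and use Young's inequality together with $\sigma\le 1/2$ so that a strictly negative multiple of $\int_0^T\|\nabla G_t\|_2^2\,dt$ remains.

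Your argument for the \emph{second} inequality has a real gap. When you build $G$ from $H$ via $\nabla G_t=-H_t+\bar H_t/2$, the expansion $(\nabla G)^2=H^2-H\bar H+\bar H^2/4$ yields a correction of size $|(c)|\le C\|H\|_{L^2([0,T]\times\L)}^2$, which you cannot absorb: the only negative quadratic piece available, $-\tfrac12\int\langle\sigma(\pi),H^2\rangle$, sits inside $\cq_H$ on the left-hand side, and $\sigma(\pi)$ is not bounded below. Hence $\cq_H(\pi)\le I_T^\b+C\|H\|_{L^2}^2+\ldots$ has a right-hand side whose supremum over $H$ is $+\infty$; the maximising sequence for $\cq(\pi)$ has $\int\sigma(\pi)H^2$ bounded but not $\int H^2$. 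The scaling $H\mapsto tH$ does not help---optimising $\cj_{tG}^\b$ over $t$ just reproduces $\cj_G^\b\le I_T^\b$---and restricting to tensor products $f(t)\phi(u)$ leaves the same $\|H\|_{L^2}^2$ term. Invoking the first inequality for the time-derivative correction compounds the problem by bringing in a further $\sqrt{\cq(\pi)}\,\|H\|_{L^2}$ factor.

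The approach in the cited references avoids this by choosing $G$ directly rather than through $H$: one takes $G_t=h'(\pi_t^\delta)-h'(\theta)$ with $h(r)=r\log r+(1-r)\log(1-r)$, $\pi^\delta$ a regularisation of $\pi$, and $\theta$ a fixed smooth profile with $\theta(\pm1)=\rho_\pm$. Then $\nabla G_t\approx \nabla\pi_t/\chi(\pi_t)-\nabla\theta/\chi(\theta)$, so $\int\langle\nabla\pi,\nabla G\rangle-\tfrac12\int\langle\sigma(\pi),(\nabla G)^2\rangle$ is directly comparable to $\cq(\pi)$, with errors depending only on $\theta$. The decisive point is the time-derivative term: $\int_0^T\langle\partial_t\pi_t,h'(\pi_t^\delta)\rangle\,dt$ is (after regularisation) a total time-derivative $\int_\L[h(\pi_T)-h(\gamma)]$, hence bounded by $2|\L|\sup|h|$ independently of any norm of $G$. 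This is precisely the computation carried out in Lemma~\ref{enT1} for the hydrodynamic solution; the proof of $\cq(\pi)\le C_0(1+I_T^\b(\pi|\gamma))$ is the same calculation for a general $\pi$ with $I_T^\b(\pi|\gamma)<\infty$.
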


\begin{proof}
The proof is the same
as in Proposition 4.3. \cite{qrv} or Theorem 3.3. in \cite{mo1}, or Lemma 4.9. in \cite{blm}  
\end{proof}

\begin{lemma}
\label{g06} 
Let $\{\rho^n : n\ge 1\}$ be a sequence of functions in $L^2
([0,T]\times\L)$ such that
$$
\int_0^T dt \, \Vert \rho^n_t \Vert^2_{H^1}   \;+\; 
\int_0^T dt \, \|\partial_t \rho^n_t \|_{-1}^2  
\; \le \; C_0
$$
for some finite constant $C_0$ and all $n\ge 1$. Suppose that the
sequence $\rho^n$ converges weakly in $L^2 ([0,T]\times [-1,1])$ to some $\rho$.
Then, $\rho^n$ converges strongly in $L^2 ([0,T]\times [-1,1])$ to $\rho$.
\end{lemma}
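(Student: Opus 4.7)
The statement is a classical Aubin--Lions type compactness result, so the plan is to invoke the standard compact embedding theorem for Bochner spaces. The three relevant spaces are $H^1(\Lambda) \hookrightarrow L^2(\Lambda) \hookrightarrow H^{-1}(\Lambda)$, where the first inclusion is compact by the Rellich--Kondrachov theorem (since $\Lambda=(-1,1)$ is a bounded interval) and the second is continuous by duality against $H^1_0(\Lambda) \subset H^1(\Lambda)$.

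First I would record the two hypotheses in the form needed for Aubin--Lions: the bound $\int_0^T \|\rho_t^n\|_{H^1}^2\, dt \le C_0$ says $\rho^n$ is bounded in $L^2([0,T], H^1(\Lambda))$, while $\int_0^T \|\partial_t \rho_t^n\|_{-1}^2\, dt \le C_0$ says $\partial_t \rho^n$ is bounded in $L^2([0,T], H^{-1}(\Lambda))$. The Aubin--Lions lemma then yields that the sequence $\{\rho^n\}$ is relatively compact in $L^2([0,T], L^2(\Lambda)) = L^2([0,T]\times\Lambda)$.

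Hence there exists a subsequence $\rho^{n_k}$ and some $\tilde\rho \in L^2([0,T]\times\Lambda)$ with $\rho^{n_k} \to \tilde\rho$ strongly in $L^2([0,T]\times\Lambda)$. Strong convergence implies weak convergence in the same space, and by uniqueness of weak limits (since by assumption $\rho^n \rightharpoonup \rho$) we must have $\tilde\rho = \rho$. A standard subsequence-of-subsequence argument then upgrades the conclusion to the full sequence: every subsequence of $\{\rho^n\}$ admits, by the same compactness argument, a further subsequence converging strongly in $L^2([0,T]\times\Lambda)$ to $\rho$; hence the entire sequence converges strongly to $\rho$.

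There is no substantial obstacle; the only point to verify carefully is that the Aubin--Lions framework applies with $H^1(\Lambda)$ rather than $H^1_0(\Lambda)$, which is fine because $\Lambda$ is a bounded one-dimensional domain so Rellich's compact embedding $H^1(\Lambda) \Subset L^2(\Lambda)$ holds regardless of boundary conditions, and $L^2(\Lambda)$ embeds continuously into the dual space $H^{-1}(\Lambda)$ used here. If a self-contained derivation is preferred to citing Aubin--Lions, one can instead expand $\rho_t^n(u) = \sum_k c_k^n(t)\, e_k(u)$ in an orthonormal basis of eigenfunctions of the Dirichlet Laplacian, use the $H^1$-bound to control the tail $\sum_{k\ge K}$ uniformly in $n$, and use the $H^{-1}$-bound on $\partial_t \rho^n$ together with Arzel\`a--Ascoli to extract strong convergence of each finite-dimensional projection; combining the two gives strong $L^2$ convergence.
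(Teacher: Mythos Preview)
Your proposal is correct and matches the paper's own argument: the paper also records the triple $H^1(\Lambda)\subset L^2(\Lambda)\subset H^{-1}(\Lambda)$ with compact first embedding and then invokes Simon's compactness theorem (an Aubin--Lions type result) to conclude relative compactness in $L^2([0,T];L^2(\Lambda))$, after which weak convergence upgrades to strong. Your additional eigenfunction-expansion sketch is not in the paper but is a standard alternative route to the same conclusion.
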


\begin{proof}
Recall that $H^1 (\L) \subset L^2(\L)\subset H^{-1}(\L)$.
By \cite[Theorem 21.A]{z}, the embedding $H^1 (\L) \subset
L^2(\L)$ is compact. Hence, by \cite[Lemma 4, Theorem 5]{Si},
the sequence $\{\rho^n : n\ge 1\}$ is relatively compact in $L^2(0,T;
L^2(\L))$.  In particular, weak convergence of the sequence
$\{\rho^n : n\ge 1\}$ implies strong convergence.
\end{proof}																																																																																																																																																																										²

\begin{theorem}
\label{th4}
The functional $I_T^\b(\cdot|\gamma)$ is lower semicontinuous and has
compact level sets. 
\end{theorem}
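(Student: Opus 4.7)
The plan is to combine the a priori estimates of Lemma \ref{g05} with the Aubin--Lions-type compactness statement of Lemma \ref{g06} to reduce the problem to strong $L^2$-convergence, under which the nonlinear terms in $\cj_G^\b$ are continuous; once this reduction is in place, both lower semicontinuity and compactness of level sets follow by standard arguments.

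First I would establish compactness of the level sets. Fix $\ell > 0$ and let $K_\ell := \{\pi \in D([0,T], \mc M): I_T^\b(\pi|\gamma)\le \ell\}$. For any $\pi \in K_\ell$, Lemma \ref{g05} yields $\mc Q(\pi) \le C_0(1+\ell)$ and a uniform bound on $\int_0^T \|\partial_t \pi_t\|_{-1}^2\, dt$. Using $\sigma(\pi)\le 1/2$ together with \eqref{tm4} gives $\int_0^T \|\nabla \pi_t\|_2^2\, dt\le \mc Q(\pi)$, so combined with $0\le \pi\le 1$ we obtain a uniform bound in $L^2([0,T], H^1(\L))$. Given a sequence $\{\pi^n\}\subset K_\ell$, by weak compactness in $L^2([0,T], H^1(\L))$ I extract a subsequence converging weakly to some limit $\pi$; Lemma \ref{g06} then upgrades this to strong convergence in $L^2([0,T]\times \L)$. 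The uniform $L^2([0,T], H^{-1})$-bound on $\partial_t \pi^n$ supplies equicontinuity in $H^{-1}(\L)$, and combined with compactness of $\mc M$ yields, via Ascoli--Arzel\`a, relative compactness in $C([0,T], H^{-1}(\L))$, and hence also in $D([0,T], \mc M)$ with the Skorohod topology.

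Next I would prove lower semicontinuity. Let $\pi^n\to \pi$ in $D([0,T], \mc M)$ and set $\ell := \liminf_n I_T^\b(\pi^n|\gamma)$; if $\ell = +\infty$ there is nothing to prove, so assume $\ell <\infty$. Along a subsequence $\pi^n\in K_\ell$ for large $n$, and by the previous step a further subsequence converges strongly in $L^2([0,T]\times\L)$, necessarily to $\pi$. For each fixed $G\in C^{1,2}_0([0,T]\times[-1,1])$, the functional $\cj_G^\b$ is continuous along such sequences: the linear terms in $\ell_G^\b$ pose no difficulty; the terms involving $\sigma(\pi_t)$ are continuous because $\sigma$ is bounded and Lipschitz on $[0,1]$; and $\nabla(\jn\star \pi^n_t)$ converges to $\nabla(\jn\star \pi_t)$ in $L^\infty$ by smoothness of $\jn$ and strong $L^2$-convergence of $\pi^n_t$. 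Hence $\cj_G^\b(\pi^n)\to \cj_G^\b(\pi)$, yielding $\hat I_T^\b(\pi|\gamma) = \sup_G \cj_G^\b(\pi) \le \liminf_n \hat I_T^\b(\pi^n|\gamma)$. Since $\mc Q$ is convex and hence lower semicontinuous, and $\mc Q(\pi)\le \liminf_n \mc Q(\pi^n)\le C_0(1+\ell)<\infty$, I conclude $I_T^\b(\pi|\gamma) = \hat I_T^\b(\pi|\gamma)\le \liminf_n I_T^\b(\pi^n|\gamma)$. Combined with the compactness established above, this shows that each $K_\ell$ is compact.

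The main obstacle is the nonlinear term $\langle \sigma(\pi_t), \nabla G_t\cdot \nabla(\jn\star \pi_t)\rangle$ appearing in $\cj_G^\b$, which is neither convex nor weakly continuous in $\pi$ under the Skorohod topology on $D([0,T], \mc M)$. The crucial point is that the energy constraint $\mc Q(\pi)<\infty$ built into the definition \eqref{3:Ib} supplies exactly the Sobolev regularity needed to invoke Lemma \ref{g06} and upgrade weak $L^2$-convergence to strong $L^2$-convergence, at which point all such nonlinear functionals become continuous. This is why modifying the rate functional by imposing finiteness of $\mc Q$ is essential; without this constraint the lower-semicontinuity argument (and, correspondingly, the lower bound in the large deviation principle) would not close.
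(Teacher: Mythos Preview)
Your proposal is correct and follows precisely the route the paper indicates: the paper's own proof of Theorem \ref{th4} consists only of the sentence ``Theorem \ref{th4} is proven applying Lemma \ref{g05} and Lemma \ref{g06}'' together with references to \cite{mo1} and \cite{blm}, and what you have written is exactly the way those two lemmas are meant to be combined. Your treatment of the nonlinear term $\langle \sigma(\pi_t), \nabla G_t\cdot \nabla(\jn\star \pi_t)\rangle$ via strong $L^2$-convergence is the key step, and your closing remark about why the energy constraint $\mc Q(\pi)<\infty$ is built into the definition \eqref{3:Ib} is precisely the point.
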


\begin{proof} Theorem \ref{th4} is   proven applying Lemma \ref{g05} and Lemma \ref{g06}.
See Theorem 3.4. in \cite{mo1} or Lemma 4.2. in \cite{blm}. 
\end{proof}
\medskip \noindent
We provide an explicit representation for the rate
function $I_T^\b(\cdot |\gamma)$ when it is finite.  For  $\pi \in D([0,T], \mc M)$, denote by $H^1_0(\sigma(\pi))$ the Hilbert space
induced by $\mc C^{1,2}_0([0,T]\times [-1,1])$ endowed with the inner
product $\langle\cdot,\cdot\rangle_{\sigma(\pi)}$ defined by
\begin{equation*}
\langle F,G\rangle_{\sigma(\pi)}
=\int_0^Tdt\; \langle\sigma(\pi_t),\nabla F_t\cdot\nabla G_t\rangle
\,. 
\end{equation*}

Induced means that we first declare two functions $F,G$ in $\mc
C^{1,2}_0([0,T]\times [-1,1])$ to be equivalent if $\langle
F-G,F-G\rangle_{\sigma(\pi)} = 0$ and then we complete the quotient
space with respect to the inner product
$\langle\cdot,\cdot\rangle_{\sigma(\pi)}$. The norm of
$H^1_0(\sigma(\pi))$ is denoted by $\Vert\cdot\Vert_{\sigma(\pi)}$.

\begin{lemma}
\label{lem05}
Take $\pi \in D([0,T],\mc M)$ with $I_T^\b(\pi |\gamma)<\infty$. Then,  it is uniquely  determined a  function
$F$ in $H^1_0(\sigma(\pi))$ such that $\pi$ is the  weak solution of  the following
 boundary value  problem:
\begin{equation}
\label{f05}
\begin{cases}
\partial_t\pi  & = \;\;  \Delta\pi -
\nabla \cdot \big\{ \sigma (\pi) \big[\frac{\b}2 \nabla (\jn\star \pi)  +\nabla F \big]\big\} \, \quad \hbox {in} \quad \Lambda \times (0,T),\\ 
\pi_0 (\cdot)    & =  \;\; \gamma (\cdot)\, \quad \hbox {in} \quad \Lambda,\\
\pi_t (\pm 1)   & = \;\; \rho_\pm \;\;\; 
\hbox{ for }\; 0\leq t\leq T \, .
\end{cases}
\end{equation}

Moreover,
\begin{equation}
\label{f06}
I_T^\b(\pi|\gamma) = \frac{1}{2}\Vert F\Vert_{\sigma(\pi)}^2
=\frac12 \int_0^Tdt\; \langle\sigma(\pi_t)\nabla F_t\cdot\nabla F_t\rangle\, .
\end{equation}
\end{lemma}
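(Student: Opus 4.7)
The strategy is to recognize the rate functional as the Legendre transform of a squared Hilbert-space norm and then invoke the Riesz representation theorem in $H^1_0(\sigma(\pi))$.

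\textbf{Step 1 (A priori regularity and linearity).} Since $I_T^\b(\pi|\gamma)<\infty$, Lemma \ref{lem01} gives $\pi\in\cb_\gamma^{\rho_\pm}$, and the definition \eqref{3:Ib} together with Lemma \ref{g05} yield $\pi\in L^2([0,T],H^1(\Lambda))$ with $\partial_t\pi\in L^2([0,T],H^{-1}(\Lambda))$. For each $G\in \mc C^{1,2}_0([0,T]\times[-1,1])$ the expression $\ell_G^\b(\pi,\gamma)$ in \eqref{lb1} depends linearly on $G$. Using $\pi_t(\pm 1)=\rho_\pm$ and $G_t(\pm 1)=0$, an integration by parts turns the term $-\int_0^T\langle\pi_t,\Delta G_t\rangle dt$ together with the two boundary contributions into $\int_0^T\langle\nabla\pi_t,\nabla G_t\rangle dt$, so that $\ell^\b_G(\pi,\gamma)$ is finite for every such $G$.

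\textbf{Step 2 (Continuity in the $\sigma(\pi)$-norm).} Replacing $G$ by $\alpha G$ in $\cj_G^\b(\pi)$ gives
\begin{equation*}
\cj_{\alpha G}^\b(\pi) \;=\; \alpha\,\ell_G^\b(\pi,\gamma) \;-\; \tfrac{\alpha^2}{2}\,\|G\|_{\sigma(\pi)}^2 \;\le\; I_T^\b(\pi|\gamma).
\end{equation*}
Optimising over $\alpha\in\R$ yields the key bound
\begin{equation*}
|\ell_G^\b(\pi,\gamma)|^2 \;\le\; 2\,I_T^\b(\pi|\gamma)\,\|G\|_{\sigma(\pi)}^2\;,
\end{equation*}
valid for every $G\in \mc C^{1,2}_0([0,T]\times[-1,1])$. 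Thus $G\mapsto \ell_G^\b(\pi,\gamma)$ passes to the quotient and extends continuously to a bounded linear functional on the Hilbert space $H^1_0(\sigma(\pi))$.

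\textbf{Step 3 (Riesz representation and identification of the PDE).} By the Riesz representation theorem there exists a unique $F\in H^1_0(\sigma(\pi))$ such that
\begin{equation*}
\ell_G^\b(\pi,\gamma) \;=\; \langle F,G\rangle_{\sigma(\pi)} \;=\; \int_0^T\!dt\,\langle \sigma(\pi_t)\nabla F_t,\nabla G_t\rangle
\end{equation*}
for all $G\in \mc C^{1,2}_0([0,T]\times[-1,1])$. Substituting the explicit form of $\ell_G^\b$ from \eqref{lb1}, the identity reads, for every admissible $G$,
\begin{equation*}
\begin{split}
\langle\pi_T,G_T\rangle-\langle\gamma,G_0\rangle&-\int_0^T\langle\pi_t,\partial_tG_t\rangle dt -\int_0^T\langle\pi_t,\Delta G_t\rangle dt \\
&+\rho_+\!\int_0^T\!\nabla G_t(1)\,dt-\rho_-\!\int_0^T\!\nabla G_t(-1)\,dt \\
&-\tfrac{\b}{2}\!\int_0^T\!\langle\sigma(\pi_t)\nabla(\jn\star\pi_t),\nabla G_t\rangle dt =\int_0^T\!\langle\sigma(\pi_t)\nabla F_t,\nabla G_t\rangle dt,
\end{split}
\end{equation*}
which is exactly the weak formulation of \eqref{f05}; the boundary values $\pi_t(\pm 1)=\rho_\pm$ follow from Step 1 and $\pi_0=\gamma$ is read off the $\langle\gamma,G_0\rangle$ term.

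\textbf{Step 4 (Identification of the rate).} With $F$ in hand, $\cj_G^\b(\pi)=\langle F,G\rangle_{\sigma(\pi)}-\tfrac12\|G\|_{\sigma(\pi)}^2$ for every smooth $G$. Since $\mc C^{1,2}_0$ is dense in $H^1_0(\sigma(\pi))$ and both sides are continuous in the $\sigma(\pi)$-norm, the supremum in \eqref{2:Ib} equals $\sup_{G\in H^1_0(\sigma(\pi))}\{\langle F,G\rangle_{\sigma(\pi)}-\tfrac12\|G\|^2_{\sigma(\pi)}\}$, attained at $G=F$, giving $I_T^\b(\pi|\gamma)=\tfrac12\|F\|^2_{\sigma(\pi)}$, which is \eqref{f06}. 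Uniqueness of $F$ in $H^1_0(\sigma(\pi))$ is automatic from Riesz.

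\textbf{Main obstacle.} The delicate point is the density step underpinning Riesz: the weight $\sigma(\pi)=2\pi(1-\pi)$ may degenerate where $\pi\in\{0,1\}$, so one must be careful that $\mc C^{1,2}_0$ is dense in $H^1_0(\sigma(\pi))$ in the natural $\sigma(\pi)$-weighted norm and that the continuity bound from Step 2 actually makes sense once $G$ is only in the weighted Sobolev space. This is handled exactly as in \cite{blm}, Section 4, where it is shown that the weight $\sigma(\pi)$ is regular enough (by $\mc Q(\pi)<\infty$) to allow the standard truncation/mollification arguments.
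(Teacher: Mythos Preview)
Your proposal is correct and follows exactly the standard Riesz representation argument that the paper invokes by reference to \cite{kov}; you have simply spelled out the details. One minor remark: the ``main obstacle'' you flag concerning density of $\mc C^{1,2}_0$ in $H^1_0(\sigma(\pi))$ is not in fact an obstacle here, since the paper \emph{defines} $H^1_0(\sigma(\pi))$ as the Hilbert space induced by (i.e.\ the completion of the quotient of) $\mc C^{1,2}_0([0,T]\times[-1,1])$ with respect to $\langle\cdot,\cdot\rangle_{\sigma(\pi)}$, so density is tautological and no separate regularity argument is needed.
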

\begin {proof} 
By assumption $I_T^\b(\pi|\gamma) = \hat I_T^\b(\pi|\gamma)<\infty$, defined in \eqref {2:Ib}. 
Then one   proceeds as   in \cite {kov}   with the only difference that  
because    the boundary conditions  the space is $H^1_0(\sigma(\pi))$. 
  \end {proof} 
 \medskip 
 
\begin{lemma}
\label{enT1}
Let $\rho  \in L^2 ([0,T], H^1 (\Lambda))$ be  the weak solution of the boundary value problem
 \eqref {eq:1}  then
 $$  I_T^\b(\rho|\gamma)= \hat I_T^\b(\rho|\gamma)=0, \quad \hbox {and}  \quad   \cq(\rho) < \infty. $$
Further if $  I_T^\b(\rho|\gamma)=0$,  then  $\rho  \in L^2 ([0,T], H^1 (\Lambda))$ is the weak solution of the boundary value problem
 \eqref {eq:1}.
\end{lemma}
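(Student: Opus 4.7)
The statement is a two-way equivalence, and I would prove the two directions separately, using mainly bookkeeping of the definitions together with the representation already established in Lemma~\ref{lem05}.

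For the forward direction, suppose $\rho\in L^2([0,T],H^1(\Lambda))$ is a weak solution of \eqref{eq:1}, i.e.\ $\ell^\b_G(\rho,\gamma)=0$ for every $G\in C^{1,2}_0([0,T]\times[-1,1])$. Plugging this into the definition \eqref{Jb1} yields
\[
\cj^\b_G(\rho)\;=\;-\tfrac12\int_0^T\!dt\,\langle\sigma(\rho_t),(\nabla G_t)^2\rangle\;\le\;0
\]
for every such $G$. Taking $G\equiv 0$ gives $\cj^\b_0(\rho)=0$, so the supremum in \eqref{2:Ib} is attained at $G=0$ and $\hat I_T^\b(\rho|\gamma)=0$. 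Since $\rho\in L^2([0,T],H^1(\Lambda))$, the characterization stated just after \eqref{tm3} gives $\mc Q(\rho)<\infty$, and therefore by \eqref{3:Ib} we have $I_T^\b(\rho|\gamma)=\hat I_T^\b(\rho|\gamma)=0$.

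For the converse, suppose $I_T^\b(\rho|\gamma)=0$. By the very definition \eqref{3:Ib} this forces $\mc Q(\rho)<\infty$, hence $\rho\in L^2([0,T],H^1(\Lambda))$, and it forces $\hat I_T^\b(\rho|\gamma)=0$ as well. Lemma~\ref{lem05} applies and produces a unique $F\in H^1_0(\sigma(\rho))$ such that $\rho$ solves \eqref{f05} in the weak sense and
\[
0\;=\;I_T^\b(\rho|\gamma)\;=\;\tfrac12\|F\|^2_{\sigma(\rho)}\;=\;\tfrac12\int_0^T\!dt\,\langle\sigma(\rho_t)\nabla F_t,\nabla F_t\rangle.
\]
Consequently $F$ is the zero element of $H^1_0(\sigma(\rho))$, so $\sigma(\rho_t)\nabla F_t=0$ in the sense of distributions, and the flux term $\nabla\cdot\{\sigma(\rho)\nabla F\}$ drops out of \eqref{f05}. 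What remains is exactly
\[
\partial_t\rho=\Delta\rho-\tfrac{\b}{2}\nabla\cdot\{\sigma(\rho)\nabla(\jn\star\rho)\}=\Delta\rho-\b\nabla\cdot\{\rho(1-\rho)\nabla(\jn\star\rho)\},
\]
together with the boundary and initial conditions from \eqref{f05}; that is, $\rho$ is the weak solution of \eqref{eq:1}.

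The argument is essentially a matter of unwinding definitions; there is no genuine analytic obstacle, since all the heavy lifting (in particular the identification of the minimizer $F$ in the variational representation, and the compactness/regularity needed to make \eqref{f05} meaningful) has already been absorbed into Lemma~\ref{lem05}. The only point one must be slightly careful about is that the equivalence class of $F$ in $H^1_0(\sigma(\rho))$ is defined modulo the seminorm $\|\cdot\|_{\sigma(\rho)}$, so ``$F=0$'' means precisely that the flux $\sigma(\rho)\nabla F$ vanishes as a distribution, which is exactly what is needed to remove this term from \eqref{f05} and recover \eqref{eq:1}.
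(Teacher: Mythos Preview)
Your converse direction is fine and matches the paper's approach: once $I_T^\b(\rho|\gamma)=0$, the representation Lemma~\ref{lem05} forces $\|F\|_{\sigma(\rho)}=0$, hence $\sigma(\rho)\nabla F=0$ a.e., and the extra flux term disappears from \eqref{f05}.

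The forward direction, however, has a genuine gap. You write that ``since $\rho\in L^2([0,T],H^1(\Lambda))$, the characterization stated just after \eqref{tm3} gives $\mc Q(\rho)<\infty$.'' That sentence in the paper is unfortunately loosely worded; the precise content (see the two lines following \eqref{tm4} and the reference to \cite{blm}) is that $\mc Q(\pi)<\infty$ if and only if $\pi\in L^2([0,T];H^1(\Lambda))$ \emph{and} the weighted integral in \eqref{tm4} is finite. Membership in $L^2([0,T];H^1)$ alone is not enough: for instance $\pi(t,u)=(1+u)/2$ is smooth on $[0,T]\times[-1,1]$, but $\int_{-1}^1 (\nabla\pi)^2/\sigma(\pi)\,du=\tfrac14\int_{-1}^1 (1-u^2)^{-1}\,du=+\infty$. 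So the step ``$\rho\in L^2([0,T];H^1)\Rightarrow \mc Q(\rho)<\infty$'' fails in general, and with it your derivation of $I_T^\b(\rho|\gamma)=\hat I_T^\b(\rho|\gamma)$.

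This is exactly why the paper's proof is not mere bookkeeping. The bulk of the argument there is an entropy--dissipation computation: one differentiates the free energy $\int_\Lambda F(\rho_t)\,du$ with $F(r)=r\log r+(1-r)\log(1-r)$ (regularized as $F_n$ to control the blow-up of $F''$ near $0$ and $1$), uses the equation \eqref{eq:1} to rewrite the time derivative, and extracts a uniform bound on $\int_0^T\!\int_\Lambda (\nabla\rho_t)^2/\chi_n(\rho_t)\,du\,dt$. Fatou then gives $\mc Q(\rho)<\infty$. Only after this is established can one invoke \eqref{3:Ib} (or, equivalently, Lemma~\ref{lem05} with $F=0$) to conclude $I_T^\b(\rho|\gamma)=0$. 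Your computation that $\hat I_T^\b(\rho|\gamma)=0$ is correct and useful, but it does not by itself yield $I_T^\b(\rho|\gamma)=0$ without the energy bound.
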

\begin {proof}
We start showing that   if $\rho  \in L^2 ([0,T], H^1 (\Lambda))$ is   the weak solution of the boundary value problem
 \eqref {eq:1}  then   $\cq(\rho) < \infty$.
Take  $ F(\rho)=   \rho \log \rho +(1-\rho)\log (1-\rho) $, for  $\rho \in [0,1]$.   Since
 $ \int_{\Lambda}   F(\rho _t(u)) du $ is a bounded quantity for all $ t \in R^+$, 
 we have that
 $$ \int_0^T  dt  \frac {d} {dt}  \int_{\Lambda}    F(\rho_t (u)) du  =   \int_{\Lambda}    [ F(\rho_T (u)) -F(\rho_0 (u)) ]du .$$ 
 Notice that 
$$   F'(\rho)= \log \frac \rho {(1-\rho)} \quad  \hbox {and}  \quad    F''(\rho)=\frac 1 {\rho (1-\rho)}= \frac 1 {\chi (\rho)} $$
are not   uniformly bounded for $\r \in (0,1)$.Therefore we need some care  to  derive $ F(\rho_t (u))$ with respect to $t$.
We   consider a sequence of smooth functions  
$$F_n(\rho)= \big(1+\frac2n\big)^{-1}(\rho+\frac 1 n) \log  (\rho+\frac 1 n) +(1-  \rho+\frac 1 n)\log (1- \rho+\frac 1 n)  $$  
so that
 $ \lim_{n \to \infty} F_n (a) =F (a)$.
  We have 
 \begin {equation} \label {entt1}   \int_0^T  dt  \frac {d} {dt}  \int_{\Lambda}   F_n(\rho_t (u)) du  
   =  \int_0^T  dt   \int_{\Lambda}   F'_n(\rho_t (u))  \frac {d} {dt}  \rho_t (u).
  \end {equation}
 To avoid boundary terms,  take a   smooth  function 
  $b(\cdot)$ defined on a neighborhood of $[-1,1]$  such that $b(\mp 1)=\rho^\mp$  and $ 0< \rho^- \le  b(\cdot) \le \rho^+ <1 $.  Denote  $$U_n(t,u)= F_n'(\rho_t(u))- F_n'(b(u)).$$ 
We have 
 \begin {equation} \label {entt1a} \begin {split} &
  \int_0^T  dt   \int_{\Lambda}   F'_n(\rho_t (u))  \frac {d} {dt}  \rho_t (u) \\
& \qquad
=  \int_0^T  dt   \int_{\Lambda}   U_n(\rho_t (u))  \frac {d} {dt}  \rho_t (u) +   \int_0^T  dt   \int_{\Lambda}   F'_n(b(u))  \frac {d} {dt}  \rho_t (u)  \cr 
&\qquad
=\int_0^T  dt   \int_{\Lambda}   U_n(\rho_t (u))  \frac {d} {dt}  \rho_t (u) du+    \int_{\Lambda}   F'_n(b(u))   [\rho_T (u)- \rho_0 (u)]du. 
 \end  {split}  \end {equation}
  Taking into account  \eqref  {entt1}, \eqref   {entt1a}  and  $U_n\in L^2([0,T], H_0^1)$, we get
\begin{equation} \label {entt2} 
\begin {split} & \int_0^T  dt  \frac {d} {dt}  \int_{\Lambda}   F_n(\rho_t (u)) du -
\int_\L F_n'(b(u))\big[ \rho_T(u) -\rho_)(u)\big] du \, 
\cr &= - \int_0^T  dt   \int_{\Lambda}  \nabla U_n (t,u) \left [ \nabla \rho_t (u) - \beta  \rho_t (u)(1-\rho_t(u)) ( \jn \star \nabla \rho_t ) (u) \right ]\, .
\end  {split}  \end {equation}
Denote $\chi_n(a)=(a+\frac1n) (1+\frac1n -a)$. We have that 
$$  \nabla U_n (t,u)=  \frac  { \nabla \rho_t (u) } { \chi_n (\rho_t (u) )}   -   \frac  { \nabla b(u) }  { \chi_n (b(u))}.$$
 Taking this into account and collecting the above estimates, we obtain  
\begin{equation}
 \begin{aligned}
 & \int_0^T dt\int_\L \frac{(\nabla \rho_t(u))^2}{\chi_n(\rho_t(u))}du\\
&\quad \le -\int_{\Lambda}    [ F(\rho_T (u)) -F(\rho_0 (u)) ]du +
\int_\L F_n'(b(u))\big[ \rho_T(u) -\rho_0(u)\big] du \\
& \qquad +\int_0^T dt\int_\L \frac{\nabla b(u) \cdot \nabla\rho_t(u)}{\chi_n(b(u))} du\\
&\qquad +\b \int_0^T dt\int_\L \nabla\rho_t(u))\frac{\chi(\rho_t(u))}{ \chi_n(\rho_t(u))} (\jn*\nabla \rho_t)(u)du\\
&\qquad  -\b \int_0^T dt\int_\L \nabla b(u)\frac{\chi(\rho_t(u))}{ \chi_n(b(u))}(\jn* \nabla \rho_t)(u) du\, .
 \end{aligned}
\end{equation}
Since $b(\cdot)$ is bounded below   by a strictly positive constant and
above by a constant  strictly smaller than 1, and since
$$ \int_0^T  dt   \int_{\Lambda}  du  \nabla \rho_t (u) (\jn \star \nabla \rho_t ) (u) \le C $$
for some constant $C$, we obtain, uniformly in $n$ 
$$
\int_0^T dt\int_\L \frac{(\nabla \rho_t(u))^2}{\chi_n(\rho_t(u))}du \le C'
$$
for some finite constant $C'$ which depends only on $b$ and $T$.  To
conclude the proof it remains to apply Fatou's Lemma and recall the definition of   $\cq(\rho)$ given in \eqref {tm4}.
We have shown that   $\cq(\rho) < \infty$. 
  By  Lemma \ref {lem05}    we  conclude that $  I_T^\b(\rho|\gamma)=0$.
Similar arguments allow to prove the second statement of the lemma. 
\end {proof} 

\subsection{  Comparison  between $\hat I_T^\b(\cdot|\gamma)$ and $ \hat I_T^0(\cdot|\gamma)$ } 
 Next, we compare the rate functional $\hat I^\b_T(\cdot|\g)$ with the
rate functional $\hat I^0_T(\cdot|\g)$  of the symmetric simple exclusion process (i.e. $\b=0$).

\begin{lemma}
\label{compar-I}
For  $\pi \in D([0,T],\mc M)$, with finite energy   $\mc Q(\pi)<\infty$,    we have  
\begin{equation}
\label{eqcompar}
\begin{aligned}
&\frac12 \hat I_T^0(\pi|\gamma)\, -\, \frac{\b^2}{16}\int_0^T dt\int_\L \big(\nabla\pi_t\big)^2
\le\;  \hat I_T^\b(\pi|\gamma) \\
&\qquad\qquad\qquad\qquad
 \ \le \; 2 \hat I_T^0(\pi|\gamma) \, +\, \frac{\b^2}{8}\int_0^T dt \int_\L \big(\nabla\pi_t\big)^2\,\; .
\end{aligned}
\end{equation}
\end{lemma}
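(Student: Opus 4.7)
\medskip\noindent\textbf{Proof proposal.} The plan is to compare the Radon–Nikodym–style functionals $\cj_G^\b$ and $\cj_G^0$ pointwise in the test function $G$, then optimize over $G$. Writing out the definition in \eqref{Jb1} and recalling the definition of $\ell_G^\b$ in \eqref{lb1}, one sees that
\begin{equation*}
\cj_G^\b(\pi) \;=\; \cj_G^0(\pi) \;-\; \frac{\b}{2}\int_0^T\!dt\,\bigl\langle \sigma(\pi_t),\,\nabla G_t\cdot\nabla(\jn\star \pi_t)\bigr\rangle\,,
\end{equation*}
so the whole $\b$-dependence sits in a single cross term that is linear in $\nabla G_t$ and in $\nabla(\jn\star\pi_t)$. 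I would then apply the elementary inequality \eqref{sch-ineq} to this cross term with a parameter to be chosen, so that one piece can be absorbed into the $-\frac12\langle\sigma(\pi_t),(\nabla G_t)^2\rangle$ term already present in $\cj_G^0$, while the other piece produces a remainder that only depends on $\pi$.

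For the upper bound in \eqref{eqcompar}, I would rescale $G\mapsto 2G$: the identity
\begin{equation*}
\cj_G^0(\pi)\;=\;\tfrac12\,\cj_{2G}^0(\pi)\;+\;\tfrac12\int_0^T\!dt\,\bigl\langle\sigma(\pi_t),(\nabla G_t)^2\bigr\rangle
\end{equation*}
combined with Young's inequality $\frac{\b}{2}|\nabla G_t\cdot\nabla(\jn\star\pi_t)|\le\frac12(\nabla G_t)^2+\frac{\b^2}{8}(\nabla(\jn\star\pi_t))^2$ kills the extra term and yields
\begin{equation*}
\cj_G^\b(\pi)\;\le\;\tfrac12\cj_{2G}^0(\pi)\;+\;\tfrac{\b^2}{8}\int_0^T\!dt\,\bigl\langle\sigma(\pi_t),(\nabla(\jn\star\pi_t))^2\bigr\rangle\,.
\end{equation*}
Since $\sigma(\pi)\le\tfrac12$ and, by Jensen's inequality applied to the probability kernel $\jn(u,\cdot)$ together with Lemma~\ref{lem-jn},
\begin{equation*}
\int_\L\bigl(\nabla(\jn\star\pi_t)\bigr)^2\,du\;\le\;\int_\L(\jn\star|\nabla\pi_t|)^2\,du\;\le\;\int_\L(\nabla\pi_t)^2\,du\,,
\end{equation*}
the remainder collapses to $\frac{\b^2}{8}\int_0^T\!dt\int_\L(\nabla\pi_t)^2$, and taking the supremum over $G$ gives the upper estimate.

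For the lower bound I would run the same scheme in reverse, testing $\hat I_T^\b$ with $G/2$: using
\begin{equation*}
\cj_{G/2}^\b(\pi)\;=\;\tfrac12 \cj_G^0(\pi)\;-\;\tfrac{\b}{4}\int_0^T\!dt\,\bigl\langle\sigma(\pi_t),\nabla G_t\cdot\nabla(\jn\star\pi_t)\bigr\rangle
\end{equation*}
(which follows from $\ell_G^0(\pi,\gamma)=\cj_G^0(\pi)+\frac12\int_0^T\langle\sigma(\pi_t),(\nabla G_t)^2\rangle$) together with Young's inequality in the form $\frac{\b}{4}|\nabla G_t\cdot\nabla(\jn\star\pi_t)|\le \text{(term absorbable in $\sigma(\pi)(\nabla G)^2$)}+\frac{\b^2}{16}(\nabla(\jn\star\pi_t))^2$, and then the same kernel bound and $\sigma\le 1/2$, one deduces $\cj_{G/2}^\b(\pi)\ge\tfrac12\cj_G^0(\pi)-\tfrac{\b^2}{16}\int_0^T\!dt\int_\L(\nabla\pi_t)^2$; since $\hat I_T^\b(\pi|\gamma)\ge\cj_{G/2}^\b(\pi)$, taking the sup in $G$ closes the argument.

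The main obstacle, such as it is, is purely bookkeeping: one must pick the Young parameter so that the coefficient of $\langle\sigma(\pi_t),(\nabla G_t)^2\rangle$ is exactly cancelled after the rescaling of $G$, because otherwise the $\b^2$ constant in the remainder is worse than $1/8$ (resp.\ $1/16$). The finite-energy hypothesis $\mc Q(\pi)<\infty$ is only used to guarantee that the right-hand side of \eqref{eqcompar} is finite, via \eqref{tm4}, so that the bound is nontrivial.
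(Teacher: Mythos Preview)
Your overall strategy --- isolate the $\beta$-cross term, apply Young's inequality with weight $\sigma(\pi)$, then rescale $G$ to recognize $\hat I_T^0$ with a modified coefficient, and finally use Lemma~\ref{lem-jn} plus $\sigma\le 1/2$ to collapse the remainder --- is exactly the paper's approach. But your execution of the upper bound has a genuine sign error that breaks the argument, not just the constant.

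After your identity $\cj_G^0=\tfrac12\cj_{2G}^0+\tfrac12\int\langle\sigma(\pi_t),(\nabla G_t)^2\rangle$, the cross term still has to be bounded \emph{above}, and Young's inequality only contributes a \emph{positive} $\tfrac12\int\langle\sigma,(\nabla G)^2\rangle$; it cannot ``kill'' the $+\tfrac12\int\langle\sigma,(\nabla G)^2\rangle$ already sitting there. Concretely,
\[
\cj_G^\b(\pi)-\tfrac12\cj_{2G}^0(\pi)
=\tfrac12\int_0^T\langle\sigma(\pi_t),(\nabla G_t)^2\rangle
-\tfrac{\b}{2}\int_0^T\langle\sigma(\pi_t),\nabla G_t\cdot\nabla(\jn\star\pi_t)\rangle,
\]
which is a quadratic in $\nabla G$ with \emph{positive} leading coefficient and hence is unbounded above in $G$; your displayed inequality $\cj_G^\b\le\tfrac12\cj_{2G}^0+\tfrac{\b^2}{8}\int\langle\sigma,(\nabla(\jn\star\pi))^2\rangle$ therefore cannot hold. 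The fix (this is what the paper does) is to apply Young \emph{before} splitting off $\cj_{2G}^0$: use $|\ell_G^\b-\ell_G^0|\le\tfrac14\int\langle\sigma,(\nabla G)^2\rangle+\tfrac{\b^2}{4}\int\langle\sigma,(\nabla(\jn\star\pi))^2\rangle$, so that the $-\tfrac12\int\langle\sigma,(\nabla G)^2\rangle$ already present in $\cj_G^\b$ absorbs it, leaving $\ell_G^0-\tfrac14\int\langle\sigma,(\nabla G)^2\rangle$, whose sup over $G$ is $2\hat I_T^0$.

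For the lower bound your idea is right but your identity for $\cj_{G/2}^\b$ is off by a term: the correct computation gives
\[
\cj_{G/2}^\b(\pi)=\tfrac12\cj_G^0(\pi)+\tfrac18\int_0^T\langle\sigma(\pi_t),(\nabla G_t)^2\rangle
-\tfrac{\b}{4}\int_0^T\langle\sigma(\pi_t),\nabla G_t\cdot\nabla(\jn\star\pi_t)\rangle,
\]
and it is precisely this extra $+\tfrac18\int\langle\sigma,(\nabla G)^2\rangle$ (which you dropped) that serves as the ``absorbable'' term when you apply Young with parameter $a=1/4$; the remainder is then $\tfrac{\b^2}{8}\int\langle\sigma,(\nabla(\jn\star\pi))^2\rangle\le\tfrac{\b^2}{16}\int(\nabla\pi)^2$, and the bound follows.
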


\begin{proof}
Fix $\pi\in D([0,T], \mc M)$ with finite energy and $G\in C^{1,2}_0([0,T]\times [-1,1])$.
 Recall from \eqref{Jb1}, \eqref{lb1} and \eqref{2:Ib} the definitions of  $\cj_G^\b(\pi)$, $\ell_G^\b$    and $I_T^0(\pi)$. By  the inequality 
$ab \le \frac 12 a^2+ \frac 12 b^2$ we obtain 
\begin{equation*}
\begin{aligned}
&\left|\ell_G^\b(\pi,\g)-\ell_G^0(\pi,\g) \right|\,=\, 
\left| \frac\beta2\int_0^T dt \int_\L \sigma(\pi_t)(\nabla G_t)\cdot\nabla(\jn\star \pi_t)\right|\\
&\qquad \qquad\qquad
 \le \frac14\int_0^T dt \int_\L \sigma(\pi_t)(\nabla G_t)^2\, +\,
\frac{\beta^2}4\int_0^T dt \int_\L \sigma(\pi_t)[\nabla(\jn\star \pi_t)]^2\, .
\end{aligned}
\end{equation*}
Since for each $u$, $\jn(u,v)dv$ is a probability density on $\L$  and $ \s (\cdot)  \le 1/2$, 
by Lemma \ref{lem-jn}, Jensen inequality 
and Fubini's Theorem, 
\begin{equation*}
\begin{aligned}
&\frac{\beta^2}4\int_0^T dt \int_\L \sigma(\pi_t)[\nabla(\jn\star \pi_t)]^2 \\
&\qquad \le 
\frac{\beta^2}8\int_0^T dt \int_\L \jn\star (\nabla\pi_t)^2
\,=\,\frac{\beta^2}8\int_0^T dt \int_\L(\nabla\pi_t)^2\; .
\end{aligned}
\end{equation*}
Hence
\begin{equation*}
\begin{aligned}
&\cj_G^\b(\pi) \,\le\,
\ell_G^0(\pi) \, -\, \frac14 \int_0^T dt \big< \sigma(\pi_t), (\nabla G_t)^2\big>
\,+\,\frac{\beta^2}8\int_0^T dt \int_\L(\nabla\pi_t)^2\\
& \quad \ \le \frac12 \sup_{G\in C^{1,2}_0([0,T]\times \L)}
   \Big\{ 2\ell_G^0(\pi) -\, \frac12 \int_0^T dt \big<\sigma(\pi_t), (\nabla G_t)^2\big> \Big\}\\
& \quad \quad 
\,+\,\frac{\beta^2}8\int_0^T dt \int_\L(\nabla\pi_t)^2\\
&\quad =2\widehat I_T^0(\pi)\,+\,\frac{\beta^2}8\int_0^T dt \int_\L(\nabla\pi_t)^2\; .
\end{aligned}
\end{equation*}
Now, it is enough to take the supremum over $G\in C^{1,2}_0([0,T]\times [-1,1])$ to obtain
$$
\widehat I_T^\b(\pi) \,\le\,2\widehat I_T^0(\pi)\,+\,\frac{\beta^2}8\int_0^T dt \int_\L(\nabla\pi_t)^2\; .
$$

The inequality in the left hand side of the statement is obtained in the same way.
\end{proof}

 Setting  $\b=0$ in the boundary  value problem \eqref  {eq:1} one gets the following 
 boundary  value problem for the  heat equation: 

\begin{equation}
\label{f00}
\begin{cases}
\partial_t\rho  & = \;\;  \Delta\rho \, \qquad \hbox {in } \qquad  \Lambda \times (0,T) ,\\ 
\rho_0 (\cdot)    & =  \;\; \gamma (\cdot)  \,  \qquad \hbox {in } \qquad  \Lambda ,\\
\rho_t (\pm 1)|  & = \;\; \rho_\pm \;\;\; 
\hbox{ for }\; 0\leq t\leq T \, .
\end{cases}
\end{equation}

\begin{lemma}\label{lem4.7} Let $\rz$ be the solution of \eqref{f00}, we have
\begin{equation}
\label{b=0}
\begin{aligned}
\hat I_T^\b(\rz|\gamma)\, & \le\, 
 \frac{\b^2}8 \int_0^T dt\big\<\s(\rz_t),\jn\star  (\nabla\rz_t)^2\big\> \\
\ & \le \frac{\b^2}{ 16}\int_0^T dt\int_\L\big|\nabla\rz_t\big|^2\; .
\end{aligned}
\end{equation}
\end{lemma}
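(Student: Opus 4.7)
The plan hinges on the observation that the solution $\rho^{(0)}$ of the linear heat equation \eqref{f00} annihilates the linear part of the rate functional: for every test function $G\in C^{1,2}_0([0,T]\times\Lambda)$ we have $\ell_G^0(\rho^{(0)},\gamma)=0$. To see this, I would integrate by parts in time in the $-\int_0^T\langle\rho^{(0)}_t,\partial_t G_t\rangle\,dt$ contribution, which cancels the boundary difference $\langle\rho^{(0)}_T,G_T\rangle-\langle\gamma,G_0\rangle$ and produces $\int_0^T\langle\partial_t\rho^{(0)}_t,G_t\rangle\,dt$. I would then integrate by parts twice in space in the $-\int_0^T\langle\rho^{(0)}_t,\Delta G_t\rangle\,dt$ contribution; since $G_t(\pm 1)=0$ while $\rho^{(0)}_t(\pm 1)=\rho_{\pm}$, the only spatial boundary terms that appear are $-\rho_+\nabla G_t(1)+\rho_-\nabla G_t(-1)$, and these exactly cancel the explicit reservoir terms in \eqref{lb1}. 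What remains is $\int_0^T\langle\partial_t\rho^{(0)}_t-\Delta\rho^{(0)}_t,G_t\rangle\,dt$, which vanishes because $\rho^{(0)}$ solves the heat equation.

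With $\ell_G^0(\rho^{(0)},\gamma)=0$, the definition \eqref{Jb1} of $\cj_G^\beta$ reduces to
\begin{equation*}
\cj_G^\beta(\rho^{(0)})=-\frac{\beta}{2}\int_0^T\!dt\,\langle\sigma(\rho^{(0)}_t),\nabla G_t\cdot\nabla(\jn\star\rho^{(0)}_t)\rangle-\frac{1}{2}\int_0^T\!dt\,\langle\sigma(\rho^{(0)}_t),(\nabla G_t)^2\rangle.
\end{equation*}
I would now complete the square pointwise in $(t,u)$: setting $X=\tfrac{\beta}{2}\sqrt{\sigma(\rho^{(0)}_t)}\,\nabla(\jn\star\rho^{(0)}_t)$ and $Y=\sqrt{\sigma(\rho^{(0)}_t)}\,\nabla G_t$, the identity $-XY-\tfrac12 Y^2=\tfrac12 X^2-\tfrac12(X+Y)^2\le \tfrac12 X^2$ gives
\begin{equation*}
\cj_G^\beta(\rho^{(0)})\le\frac{\beta^2}{8}\int_0^T\!dt\,\langle\sigma(\rho^{(0)}_t),[\nabla(\jn\star\rho^{(0)}_t)]^2\rangle,
\end{equation*}
uniformly in $G$. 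Taking the supremum over $G$ in \eqref{2:Ib} yields the same bound for $\hat I_T^\beta(\rho^{(0)}|\gamma)$.

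The final step upgrades $[\nabla(\jn\star\rho^{(0)}_t)]^2$ to $\jn\star(\nabla\rho^{(0)}_t)^2$. Applying \eqref{newman} in Lemma \ref{lem-jn} gives $|\nabla(\jn\star\rho^{(0)}_t)|\le \jn\star|\nabla\rho^{(0)}_t|$, and since $\jn(u,\cdot)$ is a probability density on $\Lambda$, Jensen's inequality produces $[\jn\star|\nabla\rho^{(0)}_t|]^2\le \jn\star(\nabla\rho^{(0)}_t)^2$. This yields the first inequality in \eqref{b=0}. The second inequality is elementary: $\sigma(\rho)=2\rho(1-\rho)\le 1/2$, and by Fubini combined with the identity $\int_\Lambda \jn(u,v)\,du=1$ for all $v\in\Lambda$ (which follows from the symmetry of $\jn$ established in Lemma \ref{lem-jn}), one gets $\int_\Lambda \jn\star(\nabla\rho^{(0)}_t)^2\,du=\int_\Lambda (\nabla\rho^{(0)}_t)^2\,du$, and the final bound follows.

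There is no genuine obstacle in the argument; the only point requiring mild care is the integration by parts establishing $\ell_G^0(\rho^{(0)},\gamma)=0$, where the Dirichlet condition $G_t(\pm 1)=0$ is precisely what makes the reservoir terms in \eqref{lb1} cancel the spatial boundary contributions, and where the minimal space-time regularity of $\rho^{(0)}$ needed is automatic for the classical parabolic problem \eqref{f00}.
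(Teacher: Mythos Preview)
Your proof is correct and follows essentially the same route as the paper's: you reduce $\cj_G^\beta(\rho^{(0)})$ to the $\beta$-interaction term by using that $\rho^{(0)}$ solves the heat equation (so $\ell_G^0(\rho^{(0)},\gamma)=0$), then bound the supremum over $G$ via completion of the square (the paper invokes the elementary inequality \eqref{sch-ineq}), and finally apply Lemma~\ref{lem-jn} together with Jensen and $\sigma\le 1/2$. Your write-up is in fact more explicit than the paper's about the integration by parts justifying $\ell_G^0(\rho^{(0)},\gamma)=0$ and about the Fubini step for the second inequality.
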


\begin{proof}
  For any  $G$ in $C^{1,2}_0([0,T]\times[-1,1])$,    see \eqref {Jb1},  we have 
\begin{equation} \begin{split}
\cj_G^\b(\rz)& = -\frac\b2 \int_0^T dt \Big \< \s(\rz_t) \nabla (\jn\star \rz_t) \,,\, \nabla G_t \Big \>\\
& -
 \frac 12 \int_0^T dt
 \big\langle \sigma( \rz_t ), \big( \nabla G_t \big)^2 \big\rangle \; \cr & \le 
  \frac{\b^2}8\int_0^T dt\big\<\s(\rz_t),\big[\nabla (\jn\star \rz_t)\big]^2\big\>, 
 \end{split}
\end{equation} 
by inequality \eqref{sch-ineq}, taking $a=1$.    The solution of \eqref{f00} belongs to $L^2([0,T],H^1(\L))$ and its time derivative belongs to $L^2([0,T],H^{-1}(\L))$.
Therefore,
\begin{equation*}
\begin{aligned}
&\hat I_T^\b(\rho^0 |\gamma) \, =\,
\sup_{G\in C_0^{1,2} ([0,T]\times[-1,1])} 
\Big\{ \frac\b2 \int_0^T dt \Big \< \s( \rho^0_t) \nabla (\jn\star  \rho^0_t) \,,\, \nabla G_t \Big \>\\
&\qquad\qquad\qquad\qquad\qquad\qquad\qquad
\;-\; \frac 12 \int_0^T dt
\big \< \s ( \rho^0_t), (\nabla G_t)^2 \big\> \Big\}\;.
\end{aligned}
\end{equation*}
By inequality \eqref{sch-ineq}, this last expression is bounded by 
$$
\frac{\b^2}{ 8 }\int_0^T dt\big\<\s(\rho^0_t),\big[\nabla (\jn\star  \rho^0 _t)\big]^2\big\>\, .
$$
We conclude the proof by applying Lemma \ref{lem-jn} and Jensen inequality.
\end{proof}

\subsection{$I_T^\b(\cdot|\gamma)$-Density.}\label{I-density}

In this section we show that any trajectory $\pi\in D([0,T],\mc M) $, with finite rate
function, $I_T^\b(\pi|\gamma)<\infty$, can be approximated by a
sequence of smooth trajectories $\{\pi^n : n\ge 1\}$ such that
\begin{equation*}
\lim_{n\to\infty}\pi^n\, =\, \pi\ \ \text{in}\ \  D([0,T], \mc M)\quad\text{and}\quad 
\lim_{n\to\infty}I_T^\b(\pi^n|\gamma)  \, =\, I_T^\b(\pi|\gamma)\;.
\end{equation*}

\begin{definition}
A subset $\ca$ of $D([0,T],\mc M)$ is said to be
$I_T^\b(\cdot|\gamma)$-dense if for every $\pi$ in $D([0,T],\mc M)$ such
that $I_T^\b(\pi|\gamma)<\infty$, there exists a sequence $\{\pi^n : n\ge
1\}$ in $\ca$ such that $\pi^n$ converges to $\pi$ in $D([0,T],\cm)$ and
$I_T^\b(\pi^n|\gamma)$ converges to $I_T^\b(\pi|\gamma)$.
\end{definition}

\begin{definition} \label {LT1} Let $\ca_1$ be the subset of $D([0,T],\mc M)$ consisting of trajectories
$\pi$  such that $I_T^\b(\pi|\gamma)<\infty$ and for which there exists $\delta>0$ such that $\pi$ is a weak solution of
the equation \eqref{f00} in the time interval $[0,\delta]$.
\end{definition}

\begin{lemma}
The set $\ca_1$ is $I_T^\b(\cdot|\gamma)$-dense.
\end{lemma}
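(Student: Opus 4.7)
The plan is to replace $\pi$ on a small initial interval $[0,1/n]$ by the solution $\rz$ of the heat boundary value problem \eqref{f00}, and to evolve on the remaining interval $[1/n,T]$ under the drift equation \eqref{f05} with the external field of $\pi$, but restarted from $\rz_{1/n}$. In this way each approximating path starts out as a genuine heat-equation trajectory, so it automatically belongs to $\ca_1$.

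Concretely, given $\pi$ with $I_T^\b(\pi|\g)<\infty$, Lemma \ref{lem05} produces a unique $F\in H^1_0(\s(\pi))$ such that $\pi$ is the weak solution of \eqref{f05} and $I_T^\b(\pi|\g)=\tfrac12\|F\|_{\s(\pi)}^2$. For each $n\ge 1$ let $\tilde\pi^n$ denote the weak solution of \eqref{f05} on $[1/n,T]$ with the same external field $F$, Dirichlet data $\rho_\pm$, and initial datum $\tilde\pi^n_{1/n}=\rz_{1/n}$, and set
\begin{equation*}
\pi^n_t=\begin{cases}\rz_t, & 0\le t\le 1/n,\\ \tilde\pi^n_t, & 1/n\le t\le T.\end{cases}
\end{equation*}
By construction $\pi^n$ is continuous at $t=1/n$ and coincides with the heat-equation solution on $[0,1/n]$, so $\pi^n\in\ca_1$ as soon as $I_T^\b(\pi^n|\g)<\infty$.

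Splitting an arbitrary test function $G\in \cc^{1,2}_0([0,T]\times[-1,1])$ into its restrictions to the two subintervals in the variational definition \eqref{2:Ib} gives the sub-additivity
\begin{equation*}
I_T^\b(\pi^n|\g)\le I^\b_{[0,1/n]}(\rz|\g)+I^\b_{[1/n,T]}(\tilde\pi^n|\rz_{1/n}).
\end{equation*}
Lemma \ref{lem4.7} bounds the first term by $\tfrac{\b^2}{16}\int_0^{1/n}\int_\L|\nabla\rz_t|^2\,du\,dt$, which tends to $0$ since the global energy $\int_0^T\int_\L|\nabla\rz_t|^2\,du\,dt$ is finite by the standard heat-equation energy estimate for bounded initial data. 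By Lemma \ref{lem05} applied to $\tilde\pi^n$, the second term equals $\tfrac12\int_{1/n}^T\langle\s(\tilde\pi^n_t),|\nabla F_t|^2\rangle\,dt$. Strong continuity of the Dirichlet heat semigroup yields $\rz_{1/n}\to\g$ in $L^2(\L)$; continuous dependence on initial data for \eqref{f05}, combined with the weak uniqueness proved in Section 6, then gives $\tilde\pi^n\to\pi$ in $L^2([0,T];L^2(\L))$ and, along a subsequence, a.e.\ convergence. Dominated convergence identifies the limit of the second term with $\tfrac12\|F\|_{\s(\pi)}^2=I_T^\b(\pi|\g)$. Combined with the lower semicontinuity of $I_T^\b$ from Theorem \ref{th4}, this forces $I_T^\b(\pi^n|\g)\to I_T^\b(\pi|\g)$, and the convergence $\pi^n\to\pi$ in $D([0,T],\cm)$ follows from the same continuous-dependence argument.

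The main technical difficulty is the application of dominated convergence to $\int\langle\s(\tilde\pi^n_t),|\nabla F_t|^2\rangle\,dt$, because the weight $|\nabla F|^2$ is a priori integrable only against $\s(\pi)$ and not against Lebesgue measure. This is sidestepped, following the standard strategy already used in \cite{bdgjl3,mo1,blm}, by a preliminary density step (to be carried out before the present lemma) reducing to trajectories $\pi$ whose values are uniformly bounded away from $\{0,1\}$: for such $\pi$ the weight $\s(\pi)$ admits a strictly positive lower bound, $|\nabla F|^2$ then belongs to $L^1([0,T]\times\L)$, and the uniform bound $\s(\tilde\pi^n)|\nabla F|^2\le\tfrac12|\nabla F|^2$ supplies the required dominating function.
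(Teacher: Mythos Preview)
Your construction differs substantially from the paper's and has a genuine circularity. The paper never invokes the field $F$ from Lemma~\ref{lem05}; instead it uses the explicit ``forward--backward--shift'' path
\[
\pi^\epsilon_t=\begin{cases}\rz_t,&0\le t\le\epsilon,\\ \rz_{2\epsilon-t},&\epsilon\le t\le 2\epsilon,\\ \pi_{t-2\epsilon},&2\epsilon\le t\le T,\end{cases}
\]
so that at time $2\epsilon$ one is back at $\gamma=\pi_0$ and can simply graft on the original trajectory. The cost on $[0,\epsilon]$ is controlled by Lemma~\ref{lem4.7}; on $[\epsilon,2\epsilon]$ the path solves the \emph{backward} heat equation and the variational formula is bounded directly by $\cq$-type quantities for $\rz$; on $[2\epsilon,T]$ the cost is at most $I_T^\b(\pi|\gamma)$ by time-translation. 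No PDE theory beyond the linear heat equation is used, and no assumption on $\pi$ beyond $I_T^\b(\pi|\gamma)<\infty$ is needed.

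Your route, by contrast, requires (i) existence/uniqueness of $\tilde\pi^n$ solving \eqref{f05} with the \emph{non-smooth} field $F\in H^1_0(\sigma(\pi))$, and (ii) continuous dependence of that solution on its initial datum. Neither is supplied by the paper: Section~6 treats only \eqref{eq:1}, i.e.\ the case $F=0$. More seriously, your own last paragraph concedes that the dominated-convergence step fails unless $|\nabla F|^2\in L^1$, and proposes to fix this by first reducing to paths bounded away from $\{0,1\}$. But look at Definition~\ref{LT2} and the proof of Lemma~\ref{g04}: the set $\ca_2$ is defined as a subset of $\ca_1$, and its $I_T^\b$-density is proved \emph{starting from} $\pi\in\ca_1$. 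The same order holds in \cite{bdgjl3,mo1,blm}. So the ``preliminary density step'' you invoke is exactly the step that, in this paper and in the references you cite, presupposes the lemma you are trying to prove. The argument is circular as stated, and the remaining PDE claims (well-posedness and stability of \eqref{f05} for rough $F$) would have to be proved from scratch to salvage it.
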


\begin{proof}
Fix a path $\pi$ such that $I_T^\b(\pi|\gamma)<\infty$ and let $  \rho^{(0)} $ be
the solution of the heat equation \eqref{f00}.  For $\epsilon
>0$, define $\pi^\epsilon$ as
\begin{equation*}
\pi^\epsilon_t (\cdot) \;=\;
\left\{
\begin{array}{ll}
{\displaystyle
  \rho^{(0)}_t(\cdot)} & \text{for $0\le t\le \epsilon$}, \\
{\displaystyle
\rho^{(0)}_{2\epsilon - t}(\cdot) } & \text{for $\epsilon \le t\le 2
  \epsilon$}, \\
{\displaystyle
\pi_{t-2\epsilon}(\cdot) } & \text{for $2 \epsilon \le t\le T$}. 
\end{array}
\right.
\end{equation*}
Since $ \lim_{\ve \to 0}\pi^\ve =\pi$ in $D([0,T],\cm)$ and $I(\cdot|\g)$ is lower semicontinuous, it is enough to
prove that $\forall \varepsilon >0$, $I_T^\b(\pi^\varepsilon|\gamma)<\infty$ and that 
$\liminf_{\varepsilon\to 0} I_T^\b(\pi^\varepsilon|\gamma)\le I_T^\b(\pi|\gamma)$. From Lemma \ref{lem01},
for each $\ve>0$, $\pi^\ve_0(\cdot)=\gamma(\cdot)$ and $\pi^\ve_t(\pm 1)=\rho_\pm$. Decompose the rate function
$I_T^\b(\pi^\ve|\gamma)$ as the sum of the contribution on each interval $[0,\ve]$,  $[\ve,2\ve]$ and $[2\ve,T]$.
Since on $[0,\ve]$ the path $\pi$ satisfies equation \eqref{f00}, by Lemma \ref{lem4.7}, the contribution to 
the first interval is bounded by
$$
\frac{\b^2}8\int_0^\ve dt\int_\L (\nabla \rho^{(0)}_t)^2 (v)dv\, .
$$
This converges to 0 when $\ve\downarrow 0$. 
On the time interval $[\ve,2\ve]$, $\pi^\ve$ satisfies
$$\partial_t\pi^\ve_t = -\partial_t\rho^{(0)}_{2\ve-t}=
-\Delta\rho^{(0)}_{2\ve -t}\,=\, -\Delta\pi^\ve\, .
$$
In particular, the contribution to $[\ve,2\ve]$ is equal to
\begin{equation}\label{l4.9.1}
\begin{aligned}
&\sup
\Big\{2\int_{0}^{\ve}dt\;
\big\< \nabla\rho^{(0)}_t,\nabla G \big\>+ 
\frac\b2 \int_0^\epsilon dt \Big \< \s(\rho^{(0)}_t) \nabla(\jn\star  \rho^{(0)}_t) \,,\, \nabla G_t \Big \>\\
&\qquad\qquad\qquad\qquad\qquad\qquad
\;-\; \frac 12 \int_0^\epsilon dt
\big \< \s (\rho^{(0)}_t), (\nabla G_t)^2 \big\> \Big\}\, ,
\end{aligned}
\end{equation}
where the supremum in taken over all $G\in  C^{1,2}_0([0,T]\times[-1,1])$.
We apply inequality \eqref{sch-ineq} to the first and second term inside the supremum, then apply Lemma \ref{lem4.7}. 
By Lemma \ref{lem-jn}, the supremum \eqref{l4.9.1} is bounded by
\begin{equation*}
\begin{aligned}
&4\int_0^{\ve}dt\int_{\L}du\;
\frac{(\nabla\rho^{(0)}_t)^2}{\sigma(\rho^{(0)}_t)}\,+\,
\frac{\b^2}4\int_0^\ve dt\big\<\s(\rho^{(0)}_t)[\nabla(\jn\star \rho^{(0)}_t)]^2\big\>\\
&\qquad\qquad
\le 4\int_0^{\ve}dt\int_{\L}du\;
\frac{(\nabla\rho^{(0)}_t)^2}{\sigma(\rho^{(0)}_t)}\,+\, \frac{\b^2}8\int_0^\ve dt\int_\L (\nabla \rho^{(0)}_t)^2
\, .
\end{aligned}
\end{equation*}
This last expression converges to zero as
$\ve\downarrow 0$. Finally, the contribution on $[2\ve,T]$ is bounded by
$I_T^\b(\pi|\gamma)$.
\end{proof}

\begin{definition} \label {LT2} 
Denote by $\ca_2$ the subset of $\ca_1$ of all trajectories $\pi$
such that for all $0<\delta \le T$, there exists $\epsilon >0$ such
that $\epsilon \le \pi_t(u) \le 1-\epsilon$ for $(t,u)\in [\delta, T]\times [-1,1]$.
\end {definition}
\begin{lemma}
\label{g04}
The set $\ca_2$ is $I_T^\b(\cdot|\gamma)$-dense.
\end{lemma}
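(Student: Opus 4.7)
Given $\pi \in D([0,T], \mc M)$ with $I_T^\b(\pi|\gamma) < \infty$, by the preceding lemma we may assume $\pi \in \ca_1$. The plan is to approximate $\pi$ by its convex combination with the heat flow. Let $\rz$ denote the unique weak solution of the linear boundary value problem \eqref{f00}, and for $\epsilon \in (0,1)$ set $\pi^\epsilon_t := (1-\epsilon)\pi_t + \epsilon \rz_t$. I will verify that $\pi^\epsilon \in \ca_2$, that $\pi^\epsilon \to \pi$ in $D([0,T],\mc M)$, and that $\limsup_{\epsilon\to 0} I_T^\b(\pi^\epsilon|\gamma) \le I_T^\b(\pi|\gamma)$; together with the lower semicontinuity from Theorem~\ref{th4}, this yields the claimed $I_T^\b$-density.

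Membership in $\ca_2$ is checked as follows. The initial condition and boundary values are preserved by convex combination. Since $\pi \in \ca_1$ solves \eqref{f00} on some $[0,\delta]$ with the same data as $\rz$, uniqueness for the heat equation forces $\pi \equiv \rz$ on $[0,\delta]$, hence $\pi^\epsilon = \rz$ there and $\pi^\epsilon$ satisfies \eqref{f00} on $[0,\delta]$. The strong parabolic maximum principle applied to \eqref{f00} together with $\rho_\pm \in (0,1)$ gives, for each $\delta' > 0$, a constant $\epsilon_0 > 0$ such that $\epsilon_0 \le \rz_t(u) \le 1-\epsilon_0$ on $[\delta',T]\times[-1,1]$, and consequently $\epsilon\epsilon_0 \le \pi^\epsilon_t(u) \le 1-\epsilon\epsilon_0$ on the same set. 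The convergence $\pi^\epsilon \to \pi$ in $D([0,T],\mc M)$ is immediate from $\|\pi^\epsilon_t - \pi_t\|_\infty \le \epsilon$.

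To bound $I_T^\b(\pi^\epsilon|\gamma)$ from above, decompose the functional \eqref{Jb1} as $\cj_G^\b = L_G + c_G + N_G + Q_G$, where $L_G(\pi)$ collects the $\pi$-linear part, $c_G$ depends only on $G, \gamma, \rho_\pm$, $N_G(\pi) = -\frac{\b}2\int_0^T\langle \sigma(\pi_t), \nabla G_t\cdot\nabla(\jn\star\pi_t)\rangle dt$, and $Q_G(\pi) = -\frac12 \int_0^T\langle\sigma(\pi_t), (\nabla G_t)^2\rangle dt$. Substituting $\pi^\epsilon$ and using the linearity of the convolution gives
$$\cj_G^\b(\pi^\epsilon) = (1-\epsilon)\cj_G^\b(\pi) + \epsilon\cj_G^\b(\rz) + R_Q + R_N,$$
where the concavity residuals, computed from $\sigma(a) = 2a(1-a)$, are
$$R_Q = -\epsilon(1-\epsilon)\int_0^T\!\!\int_\Lambda (\rz_t - \pi_t)^2 (\nabla G_t)^2\, du\, dt \le 0,$$
$$R_N = -\b\epsilon(1-\epsilon)\int_0^T\!\!\int_\Lambda (\rz_t - \pi_t)\, \nabla G_t\cdot V_t(u)\, du\, dt,$$
with $V_t := [1-\pi^\epsilon_t - \pi_t]\nabla(\jn\star\pi_t) - [1-\pi^\epsilon_t - \rz_t]\nabla(\jn\star \rz_t)$. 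By Lemma~\ref{lem-jn} and the Jensen/Fubini argument, $\int_0^T\|V_t\|_2^2 dt \le 2\int_0^T(\|\nabla \pi_t\|_2^2 + \|\nabla \rz_t\|_2^2)dt < \infty$, using $\pi \in L^2([0,T], H^1(\Lambda))$ (from $\mc Q(\pi) < \infty$, see Lemma~\ref{g05}) and the standard energy estimate for \eqref{f00}. Applying Young's inequality $2ab \le \eta a^2 + \eta^{-1}b^2$ with $\eta = 2/\b$ absorbs the $(\nabla G)^2$ contribution of $|R_N|$ exactly into $-R_Q$, leaving
$$R_Q + R_N \le \frac{\b^2 \epsilon(1-\epsilon)}{4}\int_0^T\|V_t\|_2^2 dt.$$
Combining with $\cj_G^\b(\pi) \le I_T^\b(\pi|\gamma)$ and $\cj_G^\b(\rz) \le I_T^\b(\rz|\gamma) < \infty$ (by Lemma~\ref{lem4.7}), and taking the supremum over $G \in C^{1,2}_0([0,T]\times[-1,1])$, one obtains
$$I_T^\b(\pi^\epsilon|\gamma) \le (1-\epsilon)\, I_T^\b(\pi|\gamma) + \epsilon\, I_T^\b(\rz|\gamma) + C_\b\, \epsilon,$$
and letting $\epsilon \downarrow 0$ proves the $\limsup$ bound.

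The main obstacle is the nonconvexity of $I_T^\b$ introduced by the long-range Kac drift $N_G$: unlike the $\b=0$ case, the cross-term $R_N$ arising from convex combination is not signed. The crux of the argument is that $R_N$ is $\b$-proportional to a bilinear expression whose $(\nabla G)^2$ factor is controlled exactly by the (nonpositive) concavity residual $R_Q$ of the quadratic noise term, via a single Young inequality, modulo an $L^2$-integrable remainder in $\nabla(\jn\star\pi)$ and $\nabla(\jn\star\rz)$. This is what allows the convex-combination strategy to succeed despite the nonlinearity produced by the Kac interaction.
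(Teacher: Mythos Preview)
Your proof is correct and takes a genuinely different route from the paper's. Both arguments start from the same convex interpolation $\pi^\epsilon=(1-\epsilon)\pi+\epsilon\rz$, but the paper proceeds in two stages: it first shows $\sup_\epsilon I_T^\b(\pi^\epsilon|\gamma)<\infty$ by a crude $A_1+A_2$ split, and then, for the $\limsup$, invokes the representation Lemma~\ref{lem05} to produce the optimal external field $F\in H^1_0(\sigma(\pi))$, rewrites the equation solved by $\pi^\epsilon$ in terms of an effective field $\FF^\b_t(\pi^\epsilon)$, bounds $I_T^\b(\pi^\epsilon|\gamma)\le \tfrac12\int (\FF^\b)^2/\sigma(\pi^\epsilon)$ using the strict positivity $\sigma(\pi^\epsilon)\ge\delta(\epsilon)>0$, and finishes by dominated convergence. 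Your argument bypasses all of this: you decompose $\cj_G^\b(\pi^\epsilon)$ directly as a convex combination plus the two concavity residuals $R_Q\le 0$ and $R_N$, and observe that a single Young inequality lets $-R_Q$ absorb the $(\nabla G)^2$ part of $|R_N|$ exactly, leaving only the $G$-independent remainder $\tfrac{\b^2}{4}\epsilon(1-\epsilon)\int_0^T\|V_t\|_2^2\,dt$. This yields the quantitative bound $I_T^\b(\pi^\epsilon|\gamma)\le (1-\epsilon)I_T^\b(\pi|\gamma)+\epsilon I_T^\b(\rz|\gamma)+C_\b\epsilon$ in one stroke, without ever calling Lemma~\ref{lem05}, without dominated convergence, and without using the pointwise lower bound on $\sigma(\pi^\epsilon)$ anywhere in the rate-function estimate. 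The paper's approach has the advantage of identifying explicitly the limiting object $\tfrac12\|F\|_{\sigma(\pi)}^2$ through the effective-field machinery, which is reused in the proof of Lemma~\ref{lg05b}; your approach is more elementary and more quantitative for this particular step, but does not by itself expose that structure.
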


\begin{proof}
By the previous lemma, it is enough to show that each trajectory
$\pi$ in $\ca_1$ can be approximated by trajectories in
$\ca_2$.  Fix $\pi$ in $\ca_1$ and let $ \rho^0 $ be the solution of the equation \eqref{f00}. For each $0<\varepsilon\le1$, let
$\pi^{\varepsilon}=(1-\varepsilon)\pi+\varepsilon \rho^0 $.   We have that $\pi^{\varepsilon} $ converges to $\pi$ as $\varepsilon\downarrow 0$ a.e. in $\L \times (0,T)$.   
By the lower
semicontinuity of $I_T^\b(\cdot|\gamma)$, 
it is enough to show that
 \begin{eqnarray}\label{ls2}
\sup_{\ve>0}{I_T^\b(\pi^\ve|\gamma)}\,<\infty\quad \text{and}\ \ \ 
\limsup_{\ve\to0}I_T^\b(\pi^{\varepsilon}|\gamma)\leq I_T^\b(\pi|\gamma)\, .
\end{eqnarray}
Fix $\ve>0$, by construction $\pi^\ve_0(\cdot)=\gamma$ and $\pi^\ve_t(\pm 1)=\rho_\pm$ for almost all $t\in[0,T]$. 
{}From the convexity of $\mc Q(\cdot)$, for each $0\le\ve\le 1$, 
$$\mc Q(\pi_\epsilon) \le (1-\ve) \mc Q(\pi) +
 \ve \mc Q (\rho^0)\le \mc Q(\pi)+ \mc Q(\rho^0)<\infty\, .
$$
Since 
$\partial_t \pi^\ve=\ve\partial_t \rho^0  +(1-\ve)\partial_t\pi$ and,  by the assumption,   $ I_T^\b(\pi|\gamma) $  is finite, it follows from Lemma \ref{g05}, that 
$\pi^\ve\in L^2(0,T;H^1(\L))$ and $\partial_t \pi^\ve \in  L^2(0,T;H^{-1}(\L))  $. 
   Next we show that  $ I_T^\b(\pi^\ve|\gamma)$ is finite uniformly on $\e$.   
We  decompose the rate $I_T^\b(\pi^\ve|\gamma)$ in  two  terms:
\begin{equation}
\label{den1}
 I_T^\b(\pi^\ve|\gamma) 
\;\le\;   A_1 + A_2 
\end{equation}
where 
\begin{equation}
\label{den1a}
A_1 = \sup
\Big\{\int_{0}^T dt\;
\big\< \partial_t \pi^\ve, G_t \big\>
\,+\,\int_{0}^T dt\;
\big\< \nabla\pi^\ve_t, \nabla G_t \big\>
\;-\; \frac 14 \int_0^T dt
\big \< \s (\pi^\ve_t), (\nabla G_t)^2 \big\> \Big\}\\
\end{equation}
\begin{equation}
\label{den1b} A_2 =  \sup
\Big\{\;
\frac\b2 \int_0^T dt \Big \< \s(\pi^\ve_t) \nabla(\jn\star  \pi^\ve_t) \,,\, \nabla G_t \Big \>
\;-\; \frac 14 \int_0^T dt
\big \< \s (\pi^\ve_t), (\nabla G_t)^2 \big\> \Big\}\; ,
\end{equation}
 and  the supremum is taken over $G\in C_0^{1,2}([0,T]\times [-1,1])$. By concavity of $\sigma(\cdot)$,  the term $A_1$
is bounded above by
\begin{equation}  \label {etl2}
\begin{aligned} 
&(1-\ve) \sup
\Big\{\int_{0}^T dt\;
\big\< \partial_t\pi_t , G_t \big\>
\,+\,\int_{0}^T dt\;
\big\< \nabla\pi_t, \nabla G_t \big\>
\;-\; \frac 14 \int_0^T dt
\big \< \s (\pi_t), (\nabla G_t)^2 \big\> \Big\}\\
&
+\ve \sup
\Big\{\int_{0}^T dt\;
\big\< \partial_t  \rho^0, G_t \big\>
\,+\,\int_{0}^T dt\;
\big\< \nabla \rho^0_t, \nabla G_t \big\>
\;-\; \frac 14 \int_0^T dt
\big \< \s (\rho^0_t) \nabla G_t,  \nabla G_t \big\> \Big\}\, .
\end{aligned}
\end{equation}
Since $\rho^0$ solves the heat equation, the second line of the last expression is equal to zero, while the  first line  is
bounded above by $ 2I_T^0(\pi|\gamma)$ which is bounded by Lemma \ref{compar-I}.
  By Schwartz inequality,
 Lemma \ref{lem-jn} and Jensen inequality  
\begin{equation} \label {etl1}
A_2 \le \frac{\b^2}4\int_0^T dt\big\<\s(\pi^\ve_t)\big(\jn\star  |\nabla \pi^\ve_t|\big)^2\big\>
\, \le\,  \frac{\b^2}4 \int_0^T dt\int_\L \big(|\nabla \pi_t|^2 +|\nabla  \rho^0_t |^2 \big)\; .
\end{equation}
By \eqref  {etl2} and \eqref {etl1} we have that  $\displaystyle \sup_{\ve >0} I_T^\b(\pi^\ve|\gamma) <\infty$.

We are now going to prove $\limsup_{\ve\to 0}I_T^\b(\pi^\ve|\gamma)\,\le\,I_T^\b(\pi|\gamma)$.
  By definition of $\pi^\ve$, we have  that, for any $G\in C_0^{1,2}([0,T]\times [-1,1])$,
\begin{equation} \label {entl3}
\int_0^T dt \big< \partial_t \pi^\ve,G_t \big>  =  (1- \e) \int_0^T dt \left \{ \big< \partial_t \pi,G_t \big> + \e \big< \partial_t   \rho^0,G_t \big> \right \}.\end{equation}
 By Lemma \ref{lem05}, there exists $F\in H^1_0(\sigma(\pi))$ such that
$\pi$ solves the boundary value problem  \eqref{f05}.  Taking this into account and  \eqref   {entl3} we  can write
\begin{equation} \label {entl4} 
\begin {split} &
\int_0^T dt \big< \partial_t \pi^\ve,G_t \big>\\
&=\int_0^T dt  \Big\{ \big< -\nabla \pi^\ve,\nabla G_t \big> +
(1-\ve)\Big<  \sigma(\pi_t)\Big[ \frac{\b}2 \nabla(\jn\star \pi) +\nabla F_t \Big],\nabla G_t\Big>\Big\} \cr & =
\int_0^T dt  \Big\{ \big< -\nabla \pi^\ve,\nabla G_t \big> +  \Big< \FF_t^\b(\pi^\ve), \nabla G_t\Big>   +
\Big< \frac{\b}2 \sigma(\pi^\ve_t)\nabla(\jn\star \pi^\ve_t),   \nabla G_t\Big> 
\Big\}, 
\end {split} 
\end{equation}
where  
\begin{equation*}
\FF_t^\b(\pi^\ve) = (1-\ve)\sigma(\pi_t)\Big[ \frac{\b}2 \nabla(\jn\star \pi_t) +\nabla F \Big]
 -\frac{\b}2 \sigma(\pi^\ve_t)\nabla(\jn\star \pi^\ve_t)\; .
\end{equation*}
By the definition of $\hat I_T^\b $, see \eqref {2:Ib}, we have that 
\begin{equation}    \label {etm1}
\hat I_T^\b(\pi^{\varepsilon}|\gamma) = 
\sup
\Big\{\int_0^T dt\big\<  \FF_t^\b(\pi^\ve)\, ,\,\nabla G_t \, \big\>
  \;-\; 
\frac{1}{2}\int_0^T dt\big\< \sigma(\pi^{\varepsilon}_t)\, ,\, 
(\nabla G_t)^2 \, \big\>\Big\}\,  =  I_T^\b(\pi^{\varepsilon}|\gamma),
\end{equation}
where the supremum  is taken over all $G\in \mc C^{1,2}_0([0,T]\times \L)$.  The last equality  in \eqref  {etm1} holds because  $\mc Q(\pi_\epsilon) $ is bounded for any $\e>0$ and then  \eqref {3:Ib} applies.  
 Since for any $\e>0$ there exists $\delta (\e)>0$ so that  $\sigma(\pi^{\varepsilon}) \ge  \delta (\e)$ we can apply to the first term inside the argument of the supremum of \eqref   {etm1}   inequality \eqref{sch-ineq}  to cancel the  contribution of the second term  inside the argument of the supremum, obtaining 
$$
I_T^\b(\pi^{\varepsilon}|\gamma)\,\le\, 
\frac{1}{2}\int_0^T dt\int_\L du\;\frac{( \FF_t^\b(\pi^\ve)
)^2}
{\sigma(\pi^{\varepsilon}_t)}\;  .
$$
On the other hand, from \eqref{f06}, 
$$
I_T^\b(\pi|\gamma)\,=\, 
 \frac{1}{2}\int_0^T dt\;\big<\sigma(\pi_t)\nabla F_t,\nabla F_t\big> \; .
$$
Therefore, to conclude the proof, it is enough to show that  
\begin{equation} \label {eq:t1}
\lim_{\ve\to 0} \frac{1}{2}\int_0^T dt\int_\L du\;\frac{(\FF_t^\b(\pi^\ve)
)^2} {\sigma(\pi^{\varepsilon}_t)}
\, =\, \frac{1}{2}\int_0^T dt\;\big<\sigma(\pi_t)\nabla F_t,\nabla F_t\big>\; .
\end{equation}
By the continuity of $\sigma$ and the definition of $\FF_t^\b(\pi^\ve)$, 
\begin{equation*}
\lim_{\varepsilon \to 0} \frac{(\FF_t^\b(\pi^\ve)
)^2} {\sigma(\pi^{\varepsilon}_t)}
=\sigma(\pi_t)\big(\nabla F_t\big)^2,  \quad a.e.   \quad  \hbox {in} \quad \L \times (0,T). 
\end{equation*}
 By the convexity of   $a\to a^2$, the inequality \eqref{sch-ineq}, the following inequality
$$
(a+b+c)^3\le 3(a^2 +b^2+c^2),\quad \forall a,b,c\in\R\, ,
$$
the concavity of $\sigma(\cdot)$ and Lemma \ref{lem-jn}, for any $0<\ve<1$,
\begin{equation*}
\begin{aligned}
\frac{(\FF_t^\b(\pi^\ve)
)^2} {\sigma(\pi^{\varepsilon})} \, &
\le \frac{3\b^2}{4}\sigma(\pi) \big[ J^{neum}\star (\nabla\pi)^2 \, + (\nabla F)^2 \big]\\ 
\ & \ \ 
+ \frac{3\b^2}{2}\big[\sigma(\pi)+\sigma(\rho^0)\big] \big[ J^{neum}\star \big( (\nabla\pi)^2 +(\nabla  \rho^0 )^2\big) \big]\; .
\end{aligned}
\end{equation*}
 Therefore \eqref {eq:t1}   follows  by  Lebesgue dominated convergence theorem. \end{proof}

\begin{definition} \label {LT3} 
Denote by $\ca_3$ the 
   trajectories $\pi \in  D([0,T],\mc M)$
such that $\pi$ is the solution of the  boundary value problem  \eqref{f05} for some $F\in  \mc C^{1,2}_0([0,T]\times \L)$.
\end {definition}
The last step is to prove that $\ca_3$ is $I_T^\b(\cdot|\gamma)$-dense.
We follow the strategy adopted in \cite{flm}: given a trajectory  $\pi$ in $D([0,T],\mc M)$  with finite
rate function $I_T^\b(\pi|\gamma)<\infty$,  from Lemma \ref {lem05}, there exists a function
$F$ in $H^1_0(\sigma(\pi))$ such that $\pi$ is a weak solution to the equation \eqref{f05}. Instead of approximating
$\pi$ by a sequence of smooth trajectories in $D([0,T],\mc M)$ (cf. \cite{blm},\cite{mo1},\cite{qrv}), we will approximate $F$ by smooth
functions $(F_n)$ and we then show that the corresponding smooth solutions $(\pi_n)$ of \eqref{f05} converge to $\pi$ 
in  $D([0,T],\mc M)$ and $I_T^\b(\pi^n|\gamma)$ converges to $I_T^\b(\pi|\gamma)$.
 
 \begin{lemma}
\label{lg05b}
The set $\ca_3$ is $I_T^\b(\cdot|\gamma)$-dense.
\end{lemma}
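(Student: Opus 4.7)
The plan, following the strategy of \cite{flm}, will be to approximate the potential $F$ supplied by Lemma \ref{lem05}, rather than the path $\pi$ directly. First I would invoke Lemma \ref{g04} to reduce to $\pi \in \ca_2$, so that on each strip $[\delta,T] \times [-1,1]$ the coefficient $\sigma(\pi)$ is bounded below by a positive constant. Next, Lemma \ref{lem05} produces a unique $F \in H^1_0(\sigma(\pi))$ making $\pi$ a weak solution of \eqref{f05}, with $I_T^\b(\pi|\gamma) = \tfrac12 \|F\|_{\sigma(\pi)}^2$. Since by construction $H^1_0(\sigma(\pi))$ is the completion of $\mc C^{1,2}_0([0,T]\times [-1,1])$ in $\|\cdot\|_{\sigma(\pi)}$, I pick a sequence $F_n \in \mc C^{1,2}_0([0,T]\times [-1,1])$ with $F_n \to F$ in this norm.

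For each smooth $F_n$ the next step is to construct $\pi^n \in \ca_3$ as a weak solution of \eqref{f05} with $F_n$ in place of $F$. I expect existence in $L^2([0,T],H^1(\Lambda))$, with $0 \le \pi^n \le 1$, to follow from a Galerkin scheme combined with a Picard fixed-point argument that treats the nonlocal convolution $\jn \star \pi^n$ perturbatively, exploiting Lipschitz dependence of the drift $\sigma(\rho)\bigl[\tfrac{\b}{2}\nabla(\jn \star \rho)+\nabla F_n\bigr]$ on bounded $\rho$. Uniqueness of $\pi^n$ would follow from a Gronwall estimate applied to the difference of two solutions. The bounds $0 \le \pi^n \le 1$ are not available from a maximum principle (the drift is non-local) and I would obtain them from the fact that $\sigma$ vanishes at $0$ and $1$, or more robustly by producing $\pi^n$ as the hydrodynamic limit of a weakly asymmetric Kawasaki process with an external field $F_n$, which automatically yields a path in $\mc M$.

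Once $\pi^n$ is in hand, the remaining task is $\pi^n \to \pi$ in $D([0,T],\mc M)$ and $I_T^\b(\pi^n|\gamma) \to I_T^\b(\pi|\gamma)$. Since $I_T^\b(\pi^n|\gamma) = \tfrac12 \|F_n\|_{\sigma(\pi^n)}^2 \le \tfrac12 \|\nabla F_n\|_{L^2}^2$ and $\nabla F_n$ is uniformly bounded in $L^2$, Lemma \ref{g05} produces uniform bounds on $\mc Q(\pi^n)$ and on $\|\partial_t \pi^n\|_{L^2([0,T],H^{-1})}$; Lemma \ref{g06} then yields strong $L^2$-compactness. Any limit point is a weak solution of \eqref{f05} with potential $F$, and hence coincides with $\pi$ by the uniqueness assertion in Lemma \ref{lem05}. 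For the convergence of rate functions I would use the decomposition
\begin{equation*}
I_T^\b(\pi^n|\gamma) - I_T^\b(\pi|\gamma) = \tfrac12 \int_0^T \big\langle \sigma(\pi^n_t), |\nabla F_n|^2 - |\nabla F|^2\big\rangle\, dt + \tfrac12 \int_0^T \big\langle \sigma(\pi^n_t) - \sigma(\pi_t), |\nabla F|^2\big\rangle\, dt;
\end{equation*}
the first piece tends to zero by Cauchy--Schwarz from $F_n \to F$ in $\|\cdot\|_{\sigma(\pi)}$ together with $\sigma(\pi^n) \to \sigma(\pi)$ in $L^1$, and the second vanishes by dominated convergence using $\sigma \le 1/2$ and the strong $L^2$-convergence $\pi^n \to \pi$.

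The hard part will be the construction of $\pi^n$ and the justification of its $[0,1]$-valuedness, since the non-local, non-convex nature of the drift in \eqref{f05} excludes maximum-principle arguments; everything after that reduces to standard compactness, lower semicontinuity, and weak uniqueness of \eqref{f05}. The rest of the argument is essentially bookkeeping once the existence and a priori bounds on the approximants $\pi^n$ are in place.
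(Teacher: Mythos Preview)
Your overall strategy coincides with the paper's: reduce to $\pi \in \ca_2$, extract $F$ via Lemma \ref{lem05}, approximate $F$ by smooth $F_n$, solve \eqref{f05} with $F_n$ to obtain $\pi^n \in \ca_3$, and pass to the limit using the compactness of Lemmas \ref{g05}--\ref{g06} and Theorem \ref{th4}. There is, however, a genuine gap in your choice of approximation norm. You take $F_n \to F$ only in $\|\cdot\|_{\sigma(\pi)}$ and then assert that $\nabla F_n$ is uniformly bounded in $L^2$. This does not follow: $\sigma(\pi_t)$ need not be bounded below near $t=0$, since the initial datum $\gamma$ may touch $0$ or $1$, so convergence in the weighted norm gives no control on the unweighted $L^2$ norm of $\nabla F_n$ over $[0,\delta]\times\Lambda$. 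Without a uniform bound on $\|\nabla F_n\|_{L^2}$, neither your uniform bound on $I_T^\b(\pi^n|\gamma)$ nor your dominated-convergence argument for the second piece of the rate-function decomposition (which tacitly requires $|\nabla F|^2 \in L^1$) is justified; and the first piece, written with weight $\sigma(\pi^n)$ rather than $\sigma(\pi)$, also needs the unweighted convergence to close.

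The paper fills this gap by first proving that $F \in L^2([0,T],H^1(\Lambda))$, and only then approximating in that stronger norm. The argument exploits the full $\ca_1 \cap \ca_2$ structure by splitting time: on $[0,\delta]$ the path $\pi$ solves the heat equation \eqref{f00}, so the identity $\nabla F = -\tfrac{\beta}{2}\nabla(\jn \star \pi)$ holds and this lies explicitly in $L^2$; on $[\delta,T]$ the $\ca_2$ bound $\sigma(\pi) \ge \sigma(\varepsilon)>0$ yields $\int_\delta^T \|\nabla F_t\|_2^2\,dt \le \sigma(\varepsilon)^{-1}\|F\|_{\sigma(\pi)}^2 = 2\sigma(\varepsilon)^{-1} I_T^\b(\pi|\gamma) < \infty$. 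With $F_n \to F$ in $L^2([0,T],H^1(\Lambda))$, the remainder of your outline (uniform bound on $I_T^\b(\pi^n|\gamma)$, compactness, identification of the limit via uniqueness of weak solutions of \eqref{f05}, convergence of the rate) goes through essentially as you describe.
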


\begin{proof}
In view of the previous lemma, it is enough
to show that for each $\pi$ in $\ca_2$, we can exhibit a sequence $\{\pi_n : n >0\}$ in $\ca_3$
which converges to $\pi$ in $D([0,T],\mc M)$ and such that $I_T^\b(\pi_n|\gamma)$
converges to $I_T^\b(\pi|\gamma)$. Fix $\pi\in\ca_2$. Since $I_T^\b(\pi| \gamma)$  is finite,  by Lemma \ref{lem05}, there exists a function
$F\in H^1_0(\sigma(\pi))$ such that $\pi$ is  the  weak solution to the  boundary value problem  \eqref{f05}. We claim hat
$F\in L^2\big( [0,T],H^1(\L)\big)$ and then, can be approximated by a sequence of smooth functions $(F^n)_{n\ge 1}$. Indeed,
let $0<\d<T $  be such that, 
$\pi$ is the solution of the heat equation \eqref{f00} in the time interval $[0,\d]$. We have that 
$\nabla F =-\frac{\b}2 \nabla (\jn\star \pi)$ in $[0,\d]\times \L$ and
\begin{equation}\label{g05a}
\begin{aligned}
\int_0^T dt\int_\L \big|\nabla F_t(u)\big|^2 du & \, =\, 
\int_0^\d dt\int_\L \frac{\b^2}4 \big|\nabla (\jn\star \pi)(t,u)\big|^2 du\\
\ &\ + \int_\delta^T dt\int_\L \big|\nabla F_t(u)\big|^2 du\, .
\end{aligned}
\end{equation}
On the other hand, since $\pi\in\ca_2$, there exists $0 <\ve<1$ such
that $\ve\le \pi_t(\cdot) \le 1-\ve$ for $\d \le t\le T$. Therefore
\begin{equation}\label{g05b}
\begin{aligned}
\int_\delta^T dt\int_\L \big|\nabla F_t(u)\big|^2 du &\le
\frac{1}{\sigma(\ve)}\Vert F\Vert_{\sigma(\pi)}^2\\
\ &\ =
\frac2{\sigma(\ve)} I_T^\b(\pi|\gamma) <\infty \, .
\end{aligned}
\end{equation}
It follows from \eqref{g05a} and \eqref{g05b} that $F\in L^2\big( [0,T],H^1(\L)\big)$.
Let $(F^n)_{n>0}$ be a sequence of functions in $C^{1,2}_0([0,T]\times \L)$ such that $\displaystyle\lim_{n\to +\infty} F^n=F$ in
$L^2\big( [0,T],H^1(\L)\big)$.

For each integer $n>0$, let $\pi^n$ be the weak solution of
\eqref{f05} with $F^n$ in place of $F$.    By \eqref{f06}
\begin{equation*}
I_T^\b(\pi^n|\gamma) = \frac{1}{2}\int_0^T\ dt\;
\langle \nabla F^n_t \cdot \sigma(\pi^n_t) \nabla F^n_t\rangle
\leq \frac{1}{4} \int_0^T dt\int_{\L} du\;
\Vert\nabla F^n_t(u)\Vert^2\, .
\end{equation*}
Since the sequence $(F_n)_{n>0}$ converges to $F$ in $L^2\big( [0,T],H^1(\L)\big)$, it follows from the last inequality that
$I_T^\b(\pi^n|\gamma)$ is uniformly bounded. Thus, by Theorem \ref{th4}, the sequence $\pi^n$ is relatively compact in $D([0,T],\mc M)$.
  Let $\{\pi^{n_k}:\, k\geq 1\}$ be a subsequence of $\pi^n$ converging
to some $\pi^0$ in $D([0,T],\mc M)$, then $\{\pi^{n_k}:\, k\geq 1\}$ converges weakly to $\pi^0$ in $L^2\big( [0,T]\times [-1,1]\big)$.   
Since $I_T^\b(\pi^n|\gamma)$ is uniformly bounded, by Lemma \ref{g05} and Lemma \ref{g06}, $\pi^{n_k}$ converges to
$\pi^0$ strongly in $L^2([0,T]\times [-1,1])$.
For every $G$ in $\mc
C^{1,2}_0([0,T]\times [-1,1])$, we have
\begin{equation*}
\begin{split}
& \langle\pi^{n_k}_T,G_T\rangle - \langle\gamma,G_0\rangle = 
\int_0^T dt\;\langle\pi^{n_k}_t,\partial_tG_t\rangle\\
&\qquad\qquad\qquad
+\int_0^T dt\;\langle \pi^{n_k}_t),
\Delta G_t\rangle
\; -{\rho_+}   \int_0^{T} \! dt\, \nabla G_t(1) 
\; +\;  {\rho_-}  \int_0^{T} \!dt\, \nabla G_t(-1)\\
&\qquad\qquad\qquad
\,  +\int_0^T \langle \nabla G_t,\sigma(\pi_t^{n_k})
\big[ \frac{\b}{2}\nabla (\jn\star \pi_t^{n_k}) + \nabla F_t^{n_k}\big]\rangle \, dt.
\end{split}
\end{equation*}
Letting  $k\to \infty$, we obtain
\begin{equation*}
\begin{split}
& \langle\pi^{0}_T,G_T\rangle - \langle\gamma,G_0\rangle = 
\int_0^T dt\;\langle\pi^{0}_t,\partial_tG_t\rangle\\
&\qquad\qquad\qquad
+\int_0^T dt\;\langle \pi^{0}_t,
\Delta G_t\rangle
\; -{\rho_+}   \int_0^{T} \! dt\, \nabla G_t(1) 
\; +\;  {\rho_-}  \int_0^{T} \!dt\, \nabla G_t(-1)\\
&\qquad\qquad\qquad
\,  +\int_0^T \langle \nabla G_t,\sigma(\pi_t^{0})
\big[ \frac{\b}{2}\nabla (\jn\star \pi_t^{0}) + \nabla F_t\big]\rangle \, dt.
\end{split}
\end{equation*}
That is $\pi^0$ is a  weak solution of equation \eqref{f05}. Thus, by uniqueness of weak solutions of \eqref{f05},
$\pi^0=\pi$.

To conclude the proof of the lemma it remains to prove that $\lim_{n\to \infty}I_T^\b(\pi^n|\gamma)=I_T^\b(\pi|\gamma)$.
The sequence $(\pi^n)_{n>0}$ converges to $\pi$ strongly in $L^2([0,T]\times [-1,1])$ and the sequence $(F^n)_{n>0}$ converges to
$F$ in $L^2([0,T],H^1(\Lambda))$.  Taking into account that $\pi$ is bounded and $\sigma$ is Lipschitz, we obtain
\begin{equation*}
\begin{aligned}
\lim_{n\to \infty} I_T^\b(\pi^n|\gamma)&
=\lim_{n\to \infty} \frac{1}{2}\int_0^T\ dt\;
\langle \nabla F^n_t \cdot \sigma(\pi^n_t) \nabla F^n_t\rangle\\
\ & \  =\frac{1}{2}\int_0^T\ dt\;
\langle \nabla F_t \cdot \sigma(\pi_t) \nabla F_t\rangle\; =I_T^\b(\pi|\gamma)\, .
\end{aligned}
\end{equation*}

\end{proof}
\subsection{Upper bound}
Let $\tq=\tq^2$ be the functional  defined in  \eqref{ccc} with $\delta_0=2$.
For  $0\le a\le 1$, denote by $\mc E_a:D([0,T],\mc M)\to [0,+\infty]$ the following functional
$$
\mc E_a(\pi)\; =\; \hat I_T^\b(\pi|\g)  \; +\; a(1+a) \tq(\pi)\; .
$$
The proof of the upper bound relies on the following proposition.
\begin{proposition}
\label{up1}
Let  ${\mc K}$ be a compact set of $D([0,T],\mc M)$. There exists a positive constants $C$, such that for any $0\le a\le 1$,
\begin{equation*}
\limsup_{N\to\infty} \frac 1{N^d}  \log
 Q^{\b,N}_{\eta^N} ({\mc K})\;\le\; -\frac1{1+a}\inf_{\pi\in {\mc K}}\mc E_a(\pi)\, +\, a C (T+1)\; .
\end{equation*}
\end{proposition}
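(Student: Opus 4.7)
The strategy combines three ingredients: a Donsker--Varadhan-type exponential-martingale bound producing the functional $\cj^\beta_G$, the strengthened energy estimate of Corollary~\ref{cor03} producing a bound in terms of $\max_j\tq^{2}_{H_j}(\,\cdot\,*\iota_\epsilon)$, and H\"older's inequality with the conjugate exponents $p=1+a$ and $q=(1+a)/a$, which is exactly what manufactures the prefactor $\tfrac{1}{1+a}$ and the additive term $aC(T+1)$.

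I first fix $G\in C^{1,2}_0([0,T]\times[-1,1])$ and $H_1,\ldots,H_k\in C^\infty_c([0,T]\times\Lambda)$. Starting from Dynkin's exponential martingale associated to $N\langle\pi^N_\cdot,G_\cdot\rangle$, expanding $N^2\mc L_N$ by means of Lemma~\ref{b1}, and replacing local cylinder functions by functions of the empirical density via Propositions~\ref{see1}--\ref{seeb}, one obtains
$$\limsup_{N\to\infty}\frac{1}{N}\log\bb E^{\beta,N}_{\eta^N}\bigl[e^{N\cj^\beta_G(\pi^N)}\bigr]\le 0.$$
Since $1+a\le 2=\delta_0$ whenever $a\in[0,1]$, Corollary~\ref{cor03} applied with $\delta=1+a$ yields
$$\limsup_{\epsilon\to 0}\limsup_{N\to\infty}\frac{1}{N}\log\bb E^{\beta,N}_{\eta^N}\Bigl[e^{N(1+a)\max_{1\le j\le k}\tq^{2}_{H_j}(\pi^N*\iota_\epsilon)}\Bigr]\le C_1(T+1).$$
Setting
$$\Psi_\epsilon(\pi):=\frac{1}{1+a}\Bigl\{\cj^\beta_G(\pi)+a(1+a)\max_{1\le j\le k}\tq^{2}_{H_j}(\pi*\iota_\epsilon)\Bigr\}$$
and applying H\"older with the exponents $(1+a,(1+a)/a)$ to the factorisation $e^{N\Psi_\epsilon}=e^{N\cj^\beta_G/(1+a)}\cdot e^{Na\max_j\tq^{2}_{H_j}(\cdot*\iota_\epsilon)}$ combines the two bounds into
$$\limsup_{\epsilon\to 0}\limsup_{N\to\infty}\frac{1}{N}\log\bb E^{\beta,N}_{\eta^N}\bigl[e^{N\Psi_\epsilon(\pi^N)}\bigr]\le \tfrac{a}{1+a}\,C_1(T+1)\le a\,C_1(T+1).$$
The Chebyshev-type bound $Q^{\beta,N}_{\eta^N}(\mc K)\le e^{-N\inf_{\mc K}\Psi_\epsilon}\,\bb E^{\beta,N}_{\eta^N}[e^{N\Psi_\epsilon(\pi^N)}]$, together with continuity of $\tq^{2}_{H_j}(\pi*\iota_\epsilon)$ in $\epsilon$ (uniformly over densities bounded by $1$), then gives, after letting $\epsilon\to 0$ suitably, for every admissible $(G,H_1,\ldots,H_k)$:
$$\limsup_{N\to\infty}\frac{1}{N}\log Q^{\beta,N}_{\eta^N}(\mc K)\le -\frac{1}{1+a}\inf_{\pi\in\mc K}\!\Bigl\{\cj^\beta_G(\pi)+a(1+a)\max_{j}\tq^{2}_{H_j}(\pi)\Bigr\}+a\,C_1(T+1).$$

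The main obstacle is the final localisation step, which upgrades this bound into the statement of the proposition by replacing $\cj^\beta_G$ by $\hat I^\beta_T=\sup_G\cj^\beta_G$ and $\max_j\tq^{2}_{H_j}$ by $\tq=\sup_H\tq^{2}_H$ inside the infimum over $\mc K$. I would use a standard compactness argument: given $\kappa>0$, at each $\pi_0\in\mc K$ I pick $G_{\pi_0}$ and $H_{\pi_0,1},\ldots,H_{\pi_0,k_{\pi_0}}$ that approximate $\hat I^\beta_T(\pi_0)$ and $\tq(\pi_0)$ within $\kappa$ (or make the corresponding test functionals exceed any prescribed threshold when those suprema are $+\infty$), together with an open neighborhood $U_{\pi_0}\ni\pi_0$ on which both $\cj^\beta_{G_{\pi_0}}$ and $\max_j\tq^{2}_{H_{\pi_0,j}}$ oscillate by at most $\kappa$; extract a finite subcover $\mc K\subset\bigcup_{i=1}^m U_{\pi_i}$; apply the previous bound on each $U_{\pi_i}$ with its own test functions; aggregate via the subadditivity $Q^{\beta,N}_{\eta^N}(\mc K)\le\sum_i Q^{\beta,N}_{\eta^N}(U_{\pi_i})$, the estimate $\log\sum_i e^{Na_i}\le\log m+N\max_i a_i$, and the trivial inequality $\max_i\{-\mc E_a(\pi_i)\}\le -\inf_{\pi\in\mc K}\mc E_a(\pi)$ (valid because each $\pi_i\in\mc K$); finally let $\kappa\to 0$. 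This yields the stated bound with $C=C_1$.
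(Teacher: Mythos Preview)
Your overall architecture is the same as the paper's: exponential martingale for $\cj^\beta_G$, the energy estimate (Lemma~\ref{hs02}/Corollary~\ref{cor03}) for $\tq_H$, H\"older with exponents $(1+a,(1+a)/a)$, and finally a compactness argument to exchange $\sup_{G,H}$ with $\inf_{\pi\in\mc K}$. The paper simply invokes the minimax argument of \cite[Lemma~11.3]{v}; your explicit finite-cover argument is a perfectly standard implementation of the same idea.

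There is however a genuine gap in your order of operations. You first let $\epsilon\to 0$, obtaining a bound involving $\cj^\beta_G(\pi)$ and $\tq^2_{H_j}(\pi)$ without mollification, and only then run the covering. But the covering requires that for each fixed $G$ the map $\pi\mapsto\cj^\beta_G(\pi)$ be lower semicontinuous on $D([0,T],\mc M)$ (so that a neighborhood of small oscillation can be found). This fails: $\cj^\beta_G$ contains the terms $\langle\sigma(\pi_t),(\nabla G_t)^2\rangle$ and $\langle\sigma(\pi_t),\nabla G_t\,\nabla(\jn\star\pi_t)\rangle$, and since the topology on $\mc M$ is the weak one, $\pi\mapsto\sigma(\pi)=2\pi(1-\pi)$ is not continuous (only upper semicontinuous), so $\cj^\beta_G$ is in general neither continuous nor lsc. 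Relatedly, your claim that $\tq^2_{H_j}(\pi*\iota_\epsilon)\to\tq^2_{H_j}(\pi)$ \emph{uniformly} over $\pi\in\mc M$ is false: the $L^1$ distance $\|\pi*\iota_\epsilon-\pi\|_1$ does not vanish uniformly over all $\pi$ (take $\pi$ oscillating at scale $\epsilon$). As a separate but related imprecision, writing $\cj^\beta_G(\pi^N)$ for the raw empirical density is meaningless here because $\pi^N(u)\in\{0,1\}$, whence $\sigma(\pi^N)\equiv 0$ and all nonlinear terms disappear; the martingale computation only identifies $\cj^\beta_G(\pi^N*\iota_\epsilon)$ on the superexponentially good set.

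The fix is exactly what the paper does: keep $\epsilon>0$ throughout the localisation step. After mollification, $\pi\mapsto\pi*\iota_\epsilon$ is continuous from $\mc M$ (weak) to $C(\overline\Lambda)$, hence $\pi\mapsto\cj^\beta_G(\pi*\iota_\epsilon)$ and $\pi\mapsto\tq^2_{H_j}(\pi*\iota_\epsilon)$ are genuinely continuous on $D([0,T],\mc M)$, and your covering argument goes through verbatim for the mollified functionals. Only after the minimax exchange do you send $\epsilon\to 0$ (and $c\to 0$), using that for each fixed $\pi$ the mollified quantities converge to the unmollified ones. With that reordering your proof is correct and coincides with the paper's.
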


\begin{proof}
Fix a density profile $\t: [-1,1]\to (0,1)$, a function $G$ in
$C^{1,2}_0([0,T]\times [-1,1])$ and a function $H$ in
$C^\infty_c([0,T]\times \L)$. 
For a local function $\Psi:\{0,1\}^{\bb Z}\to \R$,   $c>0$ and $\epsilon >0$,   denote $B_{N,\epsilon,c}^{G,\Psi}$ and $E_{N,c}^{G}$  the set of trajectories $(\eta_t)_{t\in [0,T]}$ defined by
\begin{eqnarray*}
B_{N,\epsilon,c}^{G,\Psi} &=& \Big\{ \eta_\cdot \in D([0,T], {\mc S}_N) : 
\Big| \int_0^T V_{N,\epsilon}^{G,\Psi}(t,\eta_t) dt \Big| \le c\Big\} \;,\\
E_{N,c}^{G}&=& \Big\{ \eta_\cdot \in D([0,T], {\mc S}_N) : 
\Big| \int_0^T W_N^{G}(t,\eta_t) dt \Big| \le c\Big\} \;,
\end{eqnarray*}
where $V_{N,\epsilon}^{G,\Psi}$ and $W_N^G$ are defined in \eqref{vne} and \eqref{vneb}.

Define $\Psi_1 (\eta) = [\eta(1)- \eta(0)]^2$, let $A_{N,\epsilon,c}^{G,H}$ be the set
\begin{equation*}
A_{N,\epsilon,c}^{G,H} \;=\;
B_{N,\epsilon,c}^{\nabla G,\Psi_1}\cap
E_{N,c}^{\nabla G}\cap B_{N,\epsilon,c}^{\nabla H,\Psi_1}\; . 
\end{equation*}
By the superexponential estimates 
stated in Proposition \ref{see1} and Proposition \ref{seeb}, it is enough to prove that, for every $0<a\le1 $, 
\begin{equation*}
\limsup_{c\to 0}
\limsup_{\epsilon\to 0}\limsup_{N\to\infty} \frac 1{N}  \log
 Q^{\b,N}_{\eta^N} \Big({\mc K} \cap A_{N,\epsilon,c}^{G,H} \Big)\;\le\; 
-\frac1{1+a}\inf_{\pi\in {\mc K}}\mc E_a(\pi)\, +\,  a C  (T+1)\; .
\end{equation*}
For $H\in \mc C_c^\infty([0,T]\times \L)$, recall from \eqref{cccg} the definition of 
${\tilde{\mc Q}}_H(\pi)={\tilde{\mc Q}}_H^2(\pi)$, with $\d_0=2$ and write
\begin{eqnarray*}
&&\frac 1{N}  \log
 Q^{\b,N}_{\eta^N} \Big({\mc K} \cap A_{N,\epsilon,c}^{G,H} \Big)
\; =\;\\
&&\qquad
\frac 1{N}  \log
  \Es_{\eta^N}^{\b,N}\Big[ \1\{{\mc K}\cap A_{N,\epsilon,c}^{G,H} \}e^{aN{\tilde{\mc Q}}_H(\pi^N * \iota_\epsilon) } 
e^{-aN{\tilde{\mc Q}}_H(\pi^N * \iota_\epsilon)}\Big]\;.
\end{eqnarray*}
By H\"older inequality the right hand side of the last equality is bounded above by
\begin{eqnarray}\label{aab}
&& \frac{1}{(1+a)N} \log
  \Es_{\eta^N}^{\b,N}   \Big[ \1\{{\mc K}\cap A_{N,\epsilon,c}^{G,H} \}e^{-a(1+a)N{\tilde{\mc Q}}_H(\pi^N* \iota_\epsilon)}\Big]\\
&&\quad
+ \; \frac{a}{(1+a)N} \log
 \Es_{\eta^N}^{\b,N} \Big[ e^{(1+a)N{\tilde{\mc Q}}_H(\pi^N* \iota_\epsilon)}\Big]\; .\nonumber
\end{eqnarray}
{}From Lemma \ref{hs02}, the second term of this inequality is bounded by $aC_1 (T+1)$.
Consider the exponential martingale $M^G_t$ defined by
\begin{equation}   \label{Mtg1}
 \begin {split} &
M^G_t \;=\; \exp \Big\{ N \Big[ \<\pi^N_t, G_t\> 
- \<\pi^N_0, G_0\> \cr
 &  
\qquad\qquad\qquad\qquad\qquad
\; -\; \frac 1{N} \int_0^t e^{- N \<\pi^N_s, G_s\>} (\partial_s +N^{2} \mc L_N) 
\, e^{N \<\pi^N_s, G_s\>} \,ds \Big] \Big\}\; .
\end {split} \end{equation}
Since the sequence $\{\eta^N : N\ge 1\}$ is associated to $\gamma$, an
elementary computation shows that on the set $A_{N,\epsilon,c}^{G,H}$
\begin{equation}\label{Mtg}
M^G_T \;=\;  \exp N\Big\{ {\mc J}_G^\b(\pi^N * \iota_\epsilon) 
\; +\; O_G (\epsilon)\;+\; O(c) \Big\} \; 
\; ,
\end{equation}
where $O_{G} (\epsilon)$ (resp. $O(c)$) is a quantity which
vanishes as $\epsilon \downarrow 0$ (resp. $c \downarrow 0$) and $ {\mc J}_G^\b(\cdot)$ is the functional defined in \eqref {Jb1}.  Consider the first term of \eqref{aab}
and rewrite it as
$$
\frac{1}{(1+a)N} \log
 \Es_{\eta^N}^{\b,N} \Big[ M^G_T(M^G_T)^{-1}\1\{{\mc K}\cap A_{N,\epsilon,c}^{G,H} \}e^{-a(1+a)N{\tilde{\mc Q}}_H (\pi^N *  \iota_\epsilon)}\Big]
$$
Optimizing over $\pi^N$ in ${\mc K}$, since $M^G_t$ is a mean one positive
martingale, the previous expression is bounded above by
\begin{equation*}
-  \frac1{1+a}\inf _{\pi\in {\mc K}} \Big\{{\mc J}_G^\b(\pi * \iota_\epsilon) 
\; +\ a(1+a)\tilde{\mc Q}_H(\pi *  \iota_\epsilon)\Big\}
\;+\; O_G (\epsilon) \;+\; O(c) \; .
\end{equation*}
Optimize the
previous expression with respect to $G$ and $H$.
Since the set ${\mc K}$ is compact and  ${\mc J}_G^\b(\cdot *  \iota_\epsilon)$ and $\tilde{\mc Q}_H(\cdot *  \iota_\epsilon)$
are lower semi-continuous for every $G$, $H$, $\epsilon$, we
may apply the arguments presented in \cite[Lemma 11.3]{v} to exchange
the supremum with the infimum.  In this way we obtain that the last
expression is bounded above by
$$
\frac1{1+a}\sup _{\pi\in {\mc K}}\inf_{G,H,\epsilon} \Big\{-{\mc J}_G^\b(\pi*\iota_\epsilon) 
\; -\ a(1+a)\tilde{\mc Q}_H(\pi * \iota_\epsilon)\Big\}
\;+\; O_G (\epsilon) \;+\; O(c) \; .
$$
Letting first
$\epsilon\downarrow 0$ and then $c \downarrow 0$, we obtain that the limit of 
the previous expression is bounded above by
$$
 \frac1{1+a}\sup _{\pi\in {\mc K}} \inf_{G,H}\Big\{ -{\mc J}_G^\b(\pi ) 
\; -\ a(1+a)\tilde{\mc Q}_H(\pi)\Big\}
$$
This concludes the proof of the proposition because
$\sup_{G} {\mc J}_{G}^\b (\pi) = \hat I_T^\b(\pi|\g)$.
\end{proof}

\noindent{\it Proof of the upper bound.} Let ${\mc K}$ be a compact set of $D([0,T],\mc M)$. If for all $\pi\in {\mc K}$, 
$\tilde{\mc Q}(\pi)=\infty$ then the upper bound is trivially satisfied. 
Suppose that $\inf_{\pi \in {\mc K}}\big\{\tilde{\mc Q}(\pi)\big\}<\infty$, from Proposition \ref{up1}, for any $0< a\le 1$,
\begin{eqnarray*}
&&\limsup_{N\to\infty} \frac 1{N}  \log
 Q_{\eta^N}^{\b,N} \big({\mc K}\big)\;\le\;
 -\frac1{1+a}\inf_{\pi\in {\mc K},  \tilde{\mc Q}(\pi)<\infty}\mc E_a(\pi)\, +\,  a C  (T+1) \\
&&\qquad\qquad\qquad
=\; -\frac1{1+a}\inf_{\pi\in {\mc K}}\Big\{ I_T^\b (\pi|\gamma)\, +\,a(1+a)\tilde{\mc Q}(\pi) \Big\}
\, +\, a C  (T+1)\\
&&\qquad\qquad\qquad
\le\; -\frac1{1+a}\inf_{\pi\in {\mc K}}\big\{I_T^\b (\pi|\gamma)\big\}\, -\,
a\inf_{\pi\in {\mc K}}\tilde{\mc Q}(\pi)
\, +\,  a C  (T+1) \; .
\end{eqnarray*}
To conclude the proof of the upper bound for compact sets, it remains to let $a\downarrow 0$.

To pass from compact sets to closed sets, we have to obtain
exponential tightness for the sequence  $Q_{\eta^N}^{\b,N}$.
 The proof presented in \cite[Section 10.4.]{kl} is easily adapted to our context.

\cqfd

\subsection{Lower bound}
In this section we   establish the large deviations lower bound. 

The strategy of the proof of the lower bound  consists  of two
steps. We first prove that for each $\pi\in \ca_3$, recall  its definition  in \ref {LT3}, and each neighborhood $\mc N_\pi$ of
$\pi$ in  $D \big( [0,T], \mc M \big)$
\begin{equation}\label{low1}
\liminf_{N \to \infty}\;\frac1N \log Q_{\eta^N}^{\b,N} \big\{ \mc N_\pi \big\} \ge - I_T^\b(\pi|\g)\, .
\end{equation}
The proof of the lower bound is then accomplished
by showing, see Subsection \ref{I-density}, that for any $\pi\in D \big( [0,T], \mc M\big)$
with $I_T^\b(\pi|\g) <\infty$ we can find a sequence of $\pi^k \in \ca_3$ 
such that $\lim_{k \to \infty } \pi^k =\pi$ in $D \big( [0,T], \mc M \big)$ and
$\lim_{k \to \infty }  I_T^\b\big(\pi^k|\g\big) = I_T^\b(\pi|\g)$. 

The proof of   \eqref{low1} is similar to the one in the periodic case, see \cite[Section 10.5.]{kl}. It
depends on establishing laws of large numbers, in hydrodynamic scaling, for weak
perturbations of the original process, and controlling by the Girsanov formula the relative entropies of the processes that go
with these perturbations. Fix a path $\pi\in \ca_3$.   Then, by construction,  there exists 
     $F\in \mc C_0^{1,2}\big( [0,T] \times [-1,1]\big)$ so  that $\pi$ is the weak solution of the equation \eqref{f05}.    
Recall from \eqref{Mtg1} the definition of the exponential martingale $M^F_T$. 
Let $\bb P_{\eta^N}^F$ be the probability measure on the path space $D([0,T], \mc S_N)$ with density $M^F_T$ with respect to  $\bb P^{\beta,N}_{\eta^N}$:
$\bb P_{\eta^N}^F[A]=  \bb E^{\beta,N}_{\eta^N}\big[M^F_T\1\{A\} \big]$.
Let $(\eta_t)_{t\in [0,T]}$  be the process with law $\bb P_{\eta^N}^F$ on  $D([0,T], \mc S_N)$.  Let $(\pi_t^N)_{t\in [0,T]}$  be the corresponding 
empirical measure. Then $(\pi_t^N)_{t\in [0,T]}$ converges weakly in probability to $(\pi_t)_{t\in [0,T]}$. From the super 
exponential estimates, Proposition \ref{see1} and Proposition \ref{seeb}, we have
$$
\liminf_{N\to\infty}\frac1N \log Q_{\eta^N}^{\b,N} \big\{ \mc N_\pi \big\} \ge -\lim_{N\to\infty}\frac{1}{N}H
\left(\bb P^F_{\eta^N}\big|\bb P^{\beta,N}_{\eta^N}\right)\, ,
$$
where $H\big(\bb P^F_{\eta^N}\big|\bb P_{\eta^N}^{\b,N}\big)$ stands for the relative entropy given by
\begin{equation*}
H\big( \bb P^F_{\eta^N}\big|\bb P^{\beta,N}_{\eta^N} \big) \; =\; \int  
\log  \Big\{ \frac{d\bb P^F_{\eta^N} }{d\bb P^{\beta,N}_{\eta^N} } \Big\} \,d\bb P^F_{\eta^N} \; .
\end{equation*}
To conclude the proof, it remains to show that
\begin{equation*}
\lim_{N\to\infty}\frac{1}{N}H
\left(\bb P^F_{\eta^N}\big|\bb P^{\beta,N}_{\eta^N}\right) = I_T^\b(\pi|\gamma)\, .
\end{equation*}
The proof of \cite[Theorem 10.5.4]{kl} is easily adapted to our model.
\qed

 \section{Appendix}
 
 In this    section  we   summarize  the  properties  of  the equation   \eqref {eq:1} needed to prove the main results of the paper.  
 The  proofs of these results are based on applying  standard tools in partial differential equations, 
 although some care need to be taken because of  the presence of the nonlocal term.
     Notice that   because of  the  nonlocal term   the  comparison property does not hold 
 for this equation, so   tools based on maximum principle   will  not work  for  \eqref {eq:1}. 
 
 We recall   the notion of  weak solution of \eqref  {eq:1}. 
A   function $\rho(\cdot, \cdot ):[0, T]\times \L\to [0,1] $    is a weak solution   
of the initial-boundary value problem  \eqref  {eq:1}  if    $ \r \in  L^2\big([0,T],H^1(\Lambda)\big)$ and   for every $ G \in    C^{1,2}_0([0,T]\times[-1,1])$  one has    $ \ell_G^\b(\rho, \gamma)=0,$
where $\ell_G^\b$ was defined in \eqref  {lb1}.

 \medskip
\begin{theorem} \label{eru}  For any $\beta\ge0$ there exists an unique weak solution of  \eqref  {eq:1}.
 \end {theorem}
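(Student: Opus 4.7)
The plan is to prove uniqueness by an $L^2$ energy estimate combined with Gr\"onwall's inequality, and to obtain existence for free from the compactness arguments already established in Section \ref{hydro}. The main obstacle is the nonlocal drift, which destroys any comparison or maximum principle for \eqref{eq:1} and forces us to work purely at the level of energy estimates; the smoothness of $\jn$ is what ultimately allows the estimate to close.

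For uniqueness, let $\rho^{(1)}, \rho^{(2)} \in L^2([0,T], H^1(\Lambda))$ be two weak solutions with the same initial datum $\gamma$ and set $w_t = \rho^{(1)}_t - \rho^{(2)}_t$. Because both solutions take the Dirichlet values $\rho_\mp$ at $\pm 1$, the function $w_t$ belongs to $H^1_0(\Lambda)$ for almost every $t$ and the boundary terms in $\ell_G^\beta(\rho^{(1)},\gamma) - \ell_G^\beta(\rho^{(2)},\gamma)=0$ cancel. By Lemma \ref{g05} applied to each $\rho^{(i)}$, $\partial_t w\in L^2([0,T], H^{-1}(\Lambda))$, which legitimates (via standard regularization) testing the equation satisfied by $w$ against $w$ itself. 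Doing so yields
\begin{equation*}
\frac{1}{2}\frac{d}{dt}\|w_t\|_2^2 + \|\nabla w_t\|_2^2 = \beta \int_\Lambda \nabla w_t \cdot \bigl[\rho^{(1)}_t(1-\rho^{(1)}_t)\nabla(\jn\star\rho^{(1)}_t) - \rho^{(2)}_t(1-\rho^{(2)}_t)\nabla(\jn\star\rho^{(2)}_t)\bigr]\, du.
\end{equation*}

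To close this, I use the algebraic identity $\rho^{(1)}(1-\rho^{(1)}) - \rho^{(2)}(1-\rho^{(2)}) = w(1-\rho^{(1)}-\rho^{(2)})$ to split the right-hand side into
\begin{equation*}
\beta \int_\Lambda w_t (1-\rho^{(1)}_t-\rho^{(2)}_t)\, \nabla w_t \cdot \nabla(\jn\star\rho^{(1)}_t)\, du + \beta \int_\Lambda \rho^{(2)}_t(1-\rho^{(2)}_t)\, \nabla w_t \cdot \nabla(\jn\star w_t)\, du.
\end{equation*}
Since $0\le \rho^{(i)}\le 1$ and $\jn$ is smooth with bounded first derivative, $\|\nabla(\jn\star\rho^{(1)}_t)\|_\infty$ is uniformly bounded; by Lemma \ref{lem-jn} (together with the embedding $L^2(\Lambda) \hookrightarrow L^1(\Lambda)$), one also has $\|\nabla(\jn\star w_t)\|_2 \le C\|w_t\|_2$. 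Applying Cauchy--Schwarz and $2ab \le \epsilon a^2 + \epsilon^{-1} b^2$ with $\epsilon$ small enough to absorb a fraction of $\|\nabla w_t\|_2^2$ into the left-hand side gives
\begin{equation*}
\frac{d}{dt}\|w_t\|_2^2 \le C(\beta)\|w_t\|_2^2,
\end{equation*}
and Gr\"onwall together with $w_0\equiv 0$ yields $w\equiv 0$.

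For existence, take as initial distribution the product Bernoulli measure $\mu_N = \nu_N^{\gamma(\cdot)}$. The tightness of $(Q_{\mu_N}^{\beta,N})$, the identification of the limit equation in Proposition \ref{lem2}, and the energy estimate of Lemma \ref{lem3} together show that any limit point is concentrated on $\ca_{[0,T]}$, that is, on weak solutions of \eqref{eq:1} with initial profile $\gamma$. This produces a weak solution for every $\gamma\in\mc M$. The subtle point, and the place where the lack of a comparison principle really matters, is uniqueness: the splitting trick above works only because $\jn$ is smooth enough to provide both an $L^\infty$ control of $\nabla(\jn\star\rho^{(1)})$ and an $L^2$-boundedness of $w \mapsto \nabla(\jn\star w)$, so that both pieces of the nonlinear term can be dominated by $\|\nabla w\|_2 \|w\|_2$.
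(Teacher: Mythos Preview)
Your proof is correct and follows essentially the same route as the paper: existence is extracted from the tightness and identification results of Section~\ref{hydro}, while uniqueness comes from the $L^2$ energy estimate on the difference $w=\rho^{(1)}-\rho^{(2)}$ with the same algebraic splitting $\chi(\rho^{(1)})-\chi(\rho^{(2)})=w(1-\rho^{(1)}-\rho^{(2)})$ used in the proof of Theorem~\ref{stat2}, closed by Gr\"onwall rather than Poincar\'e so that no smallness of $\beta$ is needed. One small remark: the bound $\|\nabla(\jn\star w_t)\|_2\le C\|w_t\|_2$ does not follow from Lemma~\ref{lem-jn} (which controls by $\jn\star|\nabla w|$), but rather directly from the boundedness of $\partial_u\jn$ and the embedding $L^2(\Lambda)\hookrightarrow L^1(\Lambda)$, exactly as you indicate parenthetically.
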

 \medskip  The existence  of a weak  solution of \eqref  {eq:1} is   a consequence of the the tightness of   $(Q_{\mu_N}^{\b})_{N\geq 1}$ and the characterization of the support of  its limit points, see Lemma \ref {lem3}.    The uniqueness can be  easily proven  performing estimates as in Theorem \ref {stat2} for all $\beta$. 
A  proof of existence  without invoking the hydrodynamic limit  can be done applying in our setting the argument  done in   \cite {gl2}, Section 4.    
 
   \medskip
\begin{theorem} \label{stat2}  There exists  $ \beta_0$  depending on $ \jn$ and $\Lambda$, so that for $\beta \le \b_0$   there exists an unique weak  stationary solution $\bar \rho$  of  \eqref {eq:1s}. 
 Further,  let $ \rho_t (\rho_0)  $ be the  weak solution of  \eqref {eq:1}
with     initial  datum $\rho_0 \in  \mc M $. For $ \beta <\beta_0$,  there exists $c(\beta)>0$ so that  
$$   \|  \rho_t (\rho_0)- \bar \rho   \|_{L^2 (\Lambda)} \le e^{-c(\beta) t}  \| \rho_0 - \bar \rho   \|_{L^2 (\Lambda)}.  $$ 
 \end {theorem}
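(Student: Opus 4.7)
The plan is to reduce everything to one $L^2$-energy estimate for the difference of two solutions of \eqref{eq:1}, from which the decay toward a stationary profile and the uniqueness of such a profile follow at once; existence will be produced by a separate Banach contraction argument.

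\textbf{Step 1: master energy inequality.} Let $\rho^{(1)},\rho^{(2)}$ be two weak solutions of \eqref{eq:1} with the same boundary data $\rho_\pm$ but possibly different initial data, and set $w_t := \rho^{(1)}_t-\rho^{(2)}_t$. Then $w_t \in L^2([0,T],H^1_0(\Lambda))$ with $\partial_t w_t \in L^2([0,T],H^{-1}(\Lambda))$, so one may test the difference of the two weak formulations against $w_t$ itself (after the usual time regularization) to obtain
\begin{equation*}
\tfrac{1}{2}\tfrac{d}{dt}\|w_t\|_{L^2}^2 = -\|\nabla w_t\|_{L^2}^2+\beta\int_\Lambda \nabla w_t\cdot\mathcal I_t\, du,
\end{equation*}
with
\begin{equation*}
\mathcal I_t = w_t(1-\rho^{(1)}_t-\rho^{(2)}_t)\nabla(\jn\star \rho^{(1)}_t)+\rho^{(2)}_t(1-\rho^{(2)}_t)\nabla(\jn\star w_t).
\end{equation*}
Since $\jn$ is smooth and $0\le\rho\le 1$, one has $\|\nabla(\jn\star \rho)\|_\infty\le K_1$ uniformly in $\rho\in\mc M$. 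For the second summand, Lemma \ref{lem-jn} combined with Jensen's inequality and Fubini gives $\|\nabla(\jn\star w_t)\|_{L^2}\le\|\nabla w_t\|_{L^2}$. Using $|1-\rho^{(1)}-\rho^{(2)}|\le 1$, $\rho^{(2)}(1-\rho^{(2)})\le 1/4$, Cauchy--Schwarz and the Poincar\'e inequality $\|w_t\|_{L^2}\le C_P\|\nabla w_t\|_{L^2}$, one gets
\begin{equation*}
\tfrac{1}{2}\tfrac{d}{dt}\|w_t\|_{L^2}^2\le -\big(1-\tfrac{\beta}{4}-\beta K_1 C_P\big)\|\nabla w_t\|_{L^2}^2.
\end{equation*}

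\textbf{Step 2: uniqueness and exponential decay.} Choose $\beta_0>0$ by requiring $\tfrac{\beta_0}{4}+\beta_0 K_1 C_P = 1$. For $\beta<\beta_0$ the prefactor above is a positive number $\kappa(\beta)$, and another application of Poincar\'e followed by Gr\"onwall yields $\|w_t\|_{L^2}\le e^{-c(\beta)t}\|w_0\|_{L^2}$ with $c(\beta)=\kappa(\beta)/C_P^2$. Applied to a time-dependent solution and a stationary solution this is the decay statement; applied to two stationary solutions (so $w_t=w_0$ for all $t$) it forces $\|\nabla w\|_{L^2}=0$, hence $w\equiv 0$ by the vanishing trace, giving uniqueness.

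\textbf{Step 3: existence by contraction.} Let $\bar\rho^{(0)}(u)=\tfrac{1-u}{2}\rho_-+\tfrac{1+u}{2}\rho_+$ and, for $\phi\in L^2(\Lambda)$, let $T\phi$ be the weak solution of the \emph{linear} Dirichlet problem $\Delta(T\phi) = \beta\,\nabla\!\cdot\!\{\phi(1-\phi)\nabla(\jn\star\phi)\}$ with $T\phi(\mp 1)=\rho_\mp$. A computation strictly parallel to Step 1 yields $\|T\phi_1-T\phi_2\|_{H^1_0}\le \beta K_2\|\phi_1-\phi_2\|_{L^2}$ for a constant $K_2=K_2(\jn,\Lambda)$, and hence $\|T\phi_1-T\phi_2\|_{L^2}\le \beta K_2 C_P\|\phi_1-\phi_2\|_{L^2}$. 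After possibly reducing $\beta_0$, $T$ is a strict contraction on $L^2(\Lambda)$, whose unique fixed point $\bar\rho$ lies in $\bar\rho^{(0)}+H^1_0(\Lambda)$; elliptic $L^\infty$ regularity gives $\|\bar\rho-\bar\rho^{(0)}\|_\infty=O(\beta)$, so $\bar\rho(u)\in(0,1)$ once $\beta_0$ is small enough, which makes $\bar\rho$ a genuine weak stationary solution of \eqref{eq:1s}.

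\textbf{Main obstacle.} The delicate contribution is the first summand of $\mathcal I_t$: the factor $1-\rho^{(1)}-\rho^{(2)}$ has no small prefactor, so the only available absorption comes via the $L^\infty$ bound on $\nabla(\jn\star\rho)$, and this is what ties $\beta_0$ to the Lipschitz size of $\jn$ and to the Poincar\'e constant of $\Lambda$. The same obstruction explains why the argument cannot reach the critical value $\beta_c=1/4$ mentioned in the introduction; any improvement would require exploiting a cancellation when $\rho^{(1)}+\rho^{(2)}$ is close to $1$.
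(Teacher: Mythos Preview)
Your Steps 1 and 2 are essentially the paper's proof: the same $L^2$-energy identity for the difference of two solutions, the same splitting of the nonlinear drift into a $\chi(\rho)\nabla(\jn\star w)$ piece (controlled via $\chi\le 1/4$ and Lemma \ref{lem-jn}) and a $[\chi(\rho_1)-\chi(\rho_2)]\nabla(\jn\star\rho)$ piece (controlled via $\sup|\nabla\jn|$), followed by Poincar\'e and Gr\"onwall. The only cosmetic difference is that the paper inserts a Young parameter $a$ before invoking Poincar\'e, whereas you apply Poincar\'e directly to $\|w_t\|_{L^2}$; the resulting $\beta_0$ is the same up to harmless constants.

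The paper's proof actually stops after the decay inequality and does not spell out existence of $\bar\rho$; your Step 3 supplies this. One small point to tighten there: the map $\phi\mapsto\phi(1-\phi)$ is not globally Lipschitz on $L^2(\Lambda)$, so your contraction estimate for $T$ holds as written only on the closed convex set $\mc M=\{0\le\phi\le 1\}$. You should therefore either check that $T(\mc M)\subset\mc M$ (a maximum-principle argument for the linear Dirichlet problem does this), or, more simply, extract existence directly from your decay estimate: for any solution $\rho_t$ and any $s>0$, comparing $\rho_{\cdot}$ with its time-shift gives $\|\rho_{t+s}-\rho_t\|_{L^2}\le e^{-c(\beta)t}\|\rho_s-\rho_0\|_{L^2}$, so $\rho_t$ is Cauchy in $L^2$ and its limit is the desired stationary profile.
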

 \begin {proof} 
Let   $\rho_{i,0} \in   \mc M $   and     $ \rho_{i,t}$    be the      solution   of  \eqref {eq:1} for  $t \ge 0$ , with initial datum $\rho_{i,0}$,       $i=1,2$.  Set $v =  \rho_1-\rho_2$, 
we have
\begin {equation} \label {J1}   
\begin {aligned}   
&\frac 12 \frac d {dt} \|v_t\|^2_{L^2}  =   
 - \int_\L   \nabla v \left [ \nabla  v-     \beta \chi (\rho_{1})  \nabla (\jn \star \rho_{1})+  \beta \chi (\rho_{2})
 \nabla (\jn\star \rho_{2})   \right ]  \\
&\qquad\qquad
 = - \int_\L   \nabla v \left [      \nabla  v -     \beta \chi (\rho_1)  \nabla (\jn\star v) +  
\beta [   \chi (\rho_2) -   \chi (\rho_1)]  \nabla  (\jn \star   \rho_2)   \right ]. 
\end {aligned}   
\end {equation} 
Since $ \chi (a) \le \frac 14$ for $a \in [0,1]$  and  $ \left | \chi (\rho_2) -   \chi (\rho_1) \right | =  \left |  [\rho_2- \rho_1] \left [ 1- (\rho_2+ \rho_1) \right]  \right |  \le   |v| $   we have that
\begin {equation} \label {J2}    
   \int_\L   \nabla v \left [      \nabla v-     \beta \chi (\rho_1)  \nabla(\jn \star v)  \right ] \ge  \|  \nabla v \|_{L^2}^2 [ 1 - \frac \beta 4] 
     \end {equation} 
\begin {equation} \label {J3}     \begin {split}   &
  \left |\int_\L   \nabla v \left [     \chi (\rho_2) -   \chi (\rho_1)]   \nabla   (\jn \star\rho_2)
  \right ] \right |   \le   \sup | \nabla  \jn|   \|  \nabla  v \|_{L^2}  \| v \|_{L^2} \cr & \le 
  \frac a 2   \sup | \nabla  \jn|^2  \|  \nabla  v \|_{L^2}^2 + \frac 1 {2a} \| v \|_{L^2}^2,
   \end {split}  \end {equation} 
   for any $a>0$. 
   Taking into  account \eqref  {J2}  and \eqref  {J3} we can estimate \eqref  {J1} as following:
   \begin {equation} \label {J4}    
\frac 12 \frac d {dt} \|v\|^2_{L^2} \le   -    \|  \nabla  v \|_{L^2}^2 [ 1 -  \beta ( \frac 1 4 +    
  \frac a 2   \sup | \nabla  \jn|^2 )] +   \frac \b {2a}  \|  v \|_{L^2}^2.
  \end {equation} 
   and we choose $a$  so that
  $$  \frac 1 4 +    
  \frac a 2   \sup | \nabla  \jn|^2   \le \frac 1 3 $$
  Since we are in a bounded domain we can use the  Poincar\'e  inequality
   $$  \| v \|_{L^2}^2 \le C (\Lambda) \|  \nabla  v \|_{L^2}^2 $$
  obtaining
    \begin {equation} \label {J4b}    
\frac 12 \frac d {dt} \|v\|^2_{L^2} \le   -     \| v \|_{L^2}^2 \frac { [ 1 -  \frac \b 3 ]} {C(\Lambda)}    +  \frac \b {2a} \|  v \|_{L^2}^2.
  \end {equation} 
  Take $ \beta_0$ so   that
   $$ \frac { [ 1 -  \frac {\b_0} 3 ]} {C(\Lambda)} -  \frac {\b_0} {2a} =0 $$
   Then for $ \beta <\beta_0$ there exists $ c(\b) >0$, $ \frac { [ 1 -  \frac \b 3 ]} {C(\Lambda)} -  \frac \b {2a} =  c(\b)>0 $, so that 
   \begin {equation} \label {J5}  
\frac 12 \frac d {dt} \|v\|^2_{L^2}   
  \le  - c(\b) \|  v \|_{L^2}^2.
 \end {equation}    
This implies immediately that the stationary solution is unique and that it is exponential attractive in $L^2$.  
 \end{proof}

 \bigskip
\noindent {\bf Acknowledgements}.  The authors wish to thank Professor Daniel Cahill 
(Department of Communication, University of Rouen) for reading and correcting the English in the first version of the paper.
Part of this work was done during the  Enza Orlandi stay at 
 Fields Institute (Toronto, Canada)   (whose hospitality is acknowledged), for the  
Thematic Program on Dynamics and Transport in Disordered Systems: January-June 2011.
 Mustapha Mourragui  thanks the warm hospitality of the Universit\`a di Roma Tre.

\end{document}